\begin{document}
	
% \title[short text for running head]{full title}
%\title[]{Minkowski inequality and equality for multiplicity of ideals of finite length in Noetherian local rings}

\thispagestyle{empty}

\title[Rational points and trace forms]{Rational points and trace forms on a finite algebra over
a real closed~field$\,{}^{\dag}$}
%  author one information
% \author[short version for running head]{name for top of paper}
%\author[]{Kriti Goel}
%\address{Indian Institute of Technology Bombay, Mumbai, INDIA 400076}
%\email{kriti@math.iitb.ac.in}
%\thanks{The first author is supported by a UGC fellowship, Govt. of India}

\author[Patil-Verma]{
Dilip\,P.\,Patil$^{\,1}$\,${}^{\ast}$}
\address{$^{1}$\,Department of Mathematics, Indian Institute of Science Bangalore}
\email{patil@iisc.ac.in}
\thanks{$^{\bf \dag}$\,{This expository article  on  the Pederson-Roy-Szpirglas theorem  about counting real roots of real polynomial equations.   Most of the exposition is influenced by the discussions of the first author with late Prof.\,Dr.\,Uwe Storch (1940-2017) and the lecture courses delivered by him.}}
\thanks{$^{\bf \ast}$\,{During the preparation of this work the  first author was visiting Department of Mathematics, Indian Institute of Technology Bombay. He would like to express his gratitude for the generous financial  support from IIT Bombay and encouraging cooperation. He was also partially supported by  his project MATRICS-DSTO-1983 during the final preparation of this manuscript.}}

\author[]{J.\,K.\,Verma$^{\,2}$}
\address{Indian Institute of Technology Bombay, Mumbai, INDIA 400076}
\email{jkv@math.iitb.ac.in}

\date{\today}

\footnotesep=3mm

\cameraready

\subjclass[2010]{Primary 13-02, 13B22, 13F30, 13H15, 14C17}

\keywords{Real closed fields, Finite $K$-algebra, Hermitian forms, Quadratic forms, Sylvester's  law of inertia, Type, Signature, Trace forms.}

\maketitle

%\hspace*{5mm} {\large\rm Dedicated to Professor  David Eisenbud on the occasion of his 75th birthday}

\begin{abstract}
The main goal of this article is to provide a proof of the Pederson-Roy-Szpirglas theorem  about counting common real zeros of  real polynomial equations by using basic results from Linear algebra and Commutative algebra.
The main tools are symmetric bilinear forms, Hermitian forms, trace forms and their invariants such as rank, types and signatures. Further, we use
the equality (proved in \cite{BS}) of the number of $K$-rational points of a zero-dimensional  affine algebraic set  over a real closed field $K$ with the signature of the trace form of its coordinate ring to prove the Pederson-Roy-Szpirglas theorem, see\,\cite{PRS1993}.
\end{abstract}

\section{Introduction}

The objective of this paper is to present an exposition of classical and modern results con- cerning  the number of real or complex points in the solution space of a finite system of poly\-nomial equations with real coefficients in arbitrary number of variables. More precisely, for polynomials $F_{1}, \ldots , F_{m} \in\R[X_{1},\ldots , X_{n}]$, assume that the residue-class $\R$-algebra $\R[X_{1},\ldots , X_{n}]/\langle F_{1},\ldots , F_{m}\rangle$ is finite dimensional over $\R$, then the set of common~zeros \\[1mm]
%%
%\vspace*{-2mm}
%\begin{align*}
\hspace*{7mm} $\,{\rm V}_{\R}(F_{1},\ldots , F_{m}):=\{(a_{1}, \ldots , a_{n})\in\R^{n}\mid F_{j}(a_{1}, \ldots , a_{n})=0 \ \hbox{ for all } \ j=1,\ldots , m\}\,$ \\[1mm]
%\vspace*{-3mm}
%\end{align*}
%
of $F_{1},\ldots , F_{m}$ in $\R^{n}\!$ is finite. The converse is not true,  for example, for $F_{1}\!=\!X_{1}^{2}\!+\!1$, ${\rm V}_{\R}(F_{1})\!=\!\emptyset$ is finite and $\R[X_{1},\ldots , X_{n}]/\langle F_{1}\rangle\!\!\iso\!\!\C[X_{2},\ldots , X_{n}]$ is not finite dimensional over $\R$ if $n\!\geq\!2$. However,
for polynomials $F_{1}, \ldots , F_{m}\!\in\!\C[X_{1},\ldots , X_{n}]$,  the residue-class $\C$-algebra $\C[X_{1},\ldots , X_{n}]/\langle F_{1},\ldots , F_{m}\rangle$ is finite dimensional over $\C$ if and only if the set of common zeros
${\rm V}_{\C}(F_{1},\ldots , F_{m})$   of $F_{1}, \ldots , F_{m}$ in $\C^{n}$ is finite.
Moreover, by  the classical Hilbert's nullstellensatz  ${\rm V}_{\C}(F_{1},\ldots , F_{m})\neq \emptyset$ if and only if the ideal $\langle F_{1},\ldots , F_{m}\rangle$ generated by $F_{1},\ldots , F_{m}$ in $\C[X_{1},\ldots , X_{n}]$ is a non-unit ideal. But, this is not true over the field $\R$ or more generally over real closed fields. Therefore
the natural questions one  deals with are when exactly ${\rm V}_{K}(F_{1},\ldots , F_{m})\neq \emptyset$  and how to find its cardinality, where $K$ is an arbitrary real closed field. \\[1mm]
Many researchers have studied these  problems and devised effective algorithms.  For example, already in the 19th century Sturm, Jacobi,  Sylvester, Hermite,  Hurwitz proved fundamental results for  counting real points (in small number of variables $n\!\leq\!2$) by using the signature of appropriate quadratic forms.
\smallskip

In Section\,2 and Section\,3, we collect standard results on symmetric bilinear  and Hermitian forms over a  real closed field $K$ and its algebraic closure $\C_{K}\!=\!K[\,{\rm i}\,]$ with ${\rm i}^{2}\!=\!-1$. However, for the sake of completeness, we recall them without proofs in the format they are used in later sections. With these preliminaries at the end od Section\,3, we state the important Rigidity theorem for quadratic forms (see \cite{BS}) which is used in Section\,4.
\smallskip

In Section\,4, we collect some elementary concepts from commutative algebra and
recall the important Theorem\,4.5 from \cite{BS} which relates the $K$-rational points of a finite dimensional algebra $A$ over a real closed field $K$ with the type of the trace form $\tr_{K}^{A}$ on $A$ and derive some consequences.

\smallskip

In Section\,5, we compute the cardinality of the $K$-rational points of finite algebra over real closed field $K$. The main ingredient in this section is the Shape Lemma\,5.3 which guarantees a distinguished generating set for a $0$-dimensional  radical ideal $\mathfrak{A}\subseteq K[X_{1},\ldots , X_{n}]$ from which  one can reduce the problem of counting the number of $K$-rational points in ${\rm V}_{K}(\mathfrak{A})$ to the one variable case.
In Theorem\,\ref{thm:5.5} using the results from Section\,4,  we relate type, signature and rank of a generalized trace forms on $A=K[X_{1}, \ldots , X_{n}]/\mathfrak{A}$ with the number of points in ${\rm V}_{K}(\mathfrak{A})$ and in
${\rm V}_{\overline{K}}(\mathfrak{A})$. Finally, we give a proof of theorem of Pederson-Roy-Szpirglas\,\cite[Theorem\,2.1]{PRS1993}.

\section{Decomposition theorem for   Hermitian forms}

The main aim of this section is to recall the Decomposition Theorem (see\,\ref{decthm:2.13}) which guarantees  the existence of orthogonal bases (with respect to Hermitian forms). For this we recall basic concepts and steps which lead to its proof.  Most of these results can be found in standard graduate text books, for instance see \cite[Ch.\,V, \S12]{StorchWiebe2011},  \cite[Ch.\,IX]{SchejaStorch1988}  or \cite[Ch.\,11]{Jacobson1989}, \cite[Ch.\,7]{Artin1994}, or \cite[Ch.\,XV]{Lang2002}. However, for setting the notation, terminology and for the sake of completeness, we recall them without proofs  in the format that they are used in this as well as in the later sections.
%Many (but not all) of these concepts and results can also be developed for free modules of finite rank over commutative rings.
\smallskip

%{\small

\begin{notass}\label{notass:2.1}
In order to define   symmetric and Hermitian forms toge\-ther and prove results about them,  we fix the following convenient notation\,:
\smallskip

Let $K$ be a field and let  $\kappa:K\rightarrow K$ be a fixed \textit{involution} (an auto\-mor\-phism whose square is the identity, i\,.e. its inverse is  itself). We denote by $K':=K^{\kappa}:=\{a\in K\mid \kappa(a) =a\}\subseteq K$ the fixed field of $K$. There are exactly two cases\,: (i)\, $\kappa =\id_{K}$ and (ii)\, $\kappa \neq \id_{K}$. In this first case, we assume that ${\rm Char}\,K\neq 2$. \\[0.75mm]
The   involution $\kappa$ of $K$ is simply denoted by the standard bar-notation $\kappa:K\rightarrow K$, $a\mapsto \overline{a}$ and called the  \so{conjugation of} $K$.
 Therefore we have\,: $\,\overline{a+b} =\overline{a}+ \overline{b}$, $\,\overline{ab} =\overline{a}\overline{b}$ and $\overline{\overline{a}} =a$  for all $a,b\in K$.  Furthermore,  the fixed field $K'= K$ in the first case and $K'\subsetneq K$ in the second case.
\end{notass}
\smallskip

\begin{examples}\label{exs:2.2}
\begin{arlist}
\item
For an  arbitrary field $K$, the identity map $\id_{K}:K\to K$ is an involution. For $K\!=\!\R$,  the identity  $\id_{\R}\!$ is the only  involution. For $K\!=\!\C$, besides the identity  $\id_{\C}$, the usual complex conjugation $ \C\rightarrow \C$, $z\mapsto \overline{z}$, is the only other involution of  $\C$ which play an important role\,\footnote{\label{foot:1}\,In the case,  $V=W=\C$, the distance of a point $z$ from the origin is not given by  the bilinear form $(z,w)\mapsto z\cdot w$, but by using the map  $(z,w)\mapsto z\cdot \overline{w}\,$, namely,  $|z|=\sqrt{z\,\overline{z}}$.}.

\item
The complex-conjugation is a special case of the conjugation of a quadratic algebra $A$ over an arbitrary field $K$\,: If $1, \omega\in A$ is a $K$-basis of $A$ with $\omega^{2}=\alpha +\beta \omega$, $\alpha$, $\beta\in K$, then the conjugation $A\rightarrow A$ of $A$ is defined by $\overline{a+b\,\omega} = (a+b\beta)-b\,\omega$, $a$, $b\in K$. It is easy to see this is  an involution of the $K$-algebra $A$ and is not equal to $\id_{A}$.
For an arbitrary element $x\in A$, the norm, the trace and the characteristic polynomial of $x$ over $K$ are defined by the  equations\,:\, ${\rm N}^{A}_{K}(x)= x\overline{x}$, $\,\Tr_{K}^{A}(x) = x+\overline{x}$, $\,\chi_{x}= X^{2}-(x+\overline{x})\,X+x\overline{x} = (X-x)(X-\overline{x})$, respectively.
\\[1mm]
There are many examples of this type, for example,
if $L$ is a field and if $\kappa \in \textrm{Aut}\, L$ is an involution of $L$ with $\kappa\neq \id_{L}$, then $L$ is a quadratic algebra over the fixed field $K:=L^{\kappa}:=\{a\in L\mid \kappa(a)=a\}$  and the involution $\kappa$ of $L$ coincides with the conjugation of the quadratic algebra over $K$ and the Galois group $\,\Gal(L\,\vert\,L^{\kappa}) =\textrm{Aut}_{L^{\kappa}\hbox{\scriptsize -alg}}\, L =\{\id_{L}, \kappa\}$. A typical example of this type is the algebraic closure $\C_{K}=K[\,{\rm i}\,]$, where ${\rm i}^{2}=-1$, of a real closed field $K$.
\end{arlist}

\end{examples}
\smallskip

\begin{definitions}\label{def:2.3}
Let $V$ and $W$ be $K$-vector spaces.

\begin{arlist}
\smallskip

\item
A map $f:V\rightarrow W$ of $K$-vector spaces is called \so{semilinear} (or \so{conjugate-linear}) (with respect to the conjugation of $K$) if $f$ is additive and $f(ax) = \overline{a}x\,$ for all $a\in K$ and all $x\in V$. The semilinear maps from $V$ into $W$ coincide with the $K$-linear maps from $V$ into the \so{anti-vector space} $\overline{W}$ corresponding to $W$ and also with the $K$-linear maps from the $K$-vector space $\overline{V}$ into $W$, where $\overline{W}$ (resp. $\overline{V}$) is a $K$-vector spaces with the scalar multiplication $(a,y)\mapsto \overline{a}y$ defined by using the given scalar multiplication on $W$ (resp. $V$).
\smallskip

 \item
 A function  $\Phi\colon V\times W\to K$ is called \so{sesquilinear}  if $\Phi$ is  $K$-linear in the first component and semilinear (with respect to the conjugation  of $K$) in the second component, i.\,e. if for all  $a$, $a'\in K$ and all  $x$, $x'\in V$,  $y$, $y'\in W$, we have\,:

\begin{alist}
\item $\,\Phi (ax+a'x',y)=a\,\Phi (x,y)+a'\,\Phi (x',y)$.
\item  $\,\Phi (x,ay+a'y')=\overline{a}\,\Phi (x,y)+\overline{a'}\,\Phi (x,y')$.
\end{alist}
\smallskip

\noindent The set of sesquilinear functions $V\times W \to K$ is denoted by ${\rm Sesq}_{K}(V,W)$ which is clearly a subspace of the $K$-vector space $K^{V\times W}\!$. If $V=W$, then  sesquilinear functions are also called \so{sesquilinear forms} on $V$.  Note that if the conjugation of $K$ is equal to  $\id_{K}$, then the sesquilinear functions are linear in both variables, i.\,e. they are bilinear and hence  $\Sesq_{\,K}(V,W) =\Mult_{K}(V,W)$ (\,= the set of bilinear functions).
\smallskip

\noindent The bijective map $\Sesq_{\,K}(V,W) \longrightarrow (V\otimes_{K}\overline{W})^\ast$, $\,\Phi\longmapsto \left(x\otimes y \mapsto \Phi(x,y)\right)$ is an isomorphism of $K$-vector spaces (with inverse $\,\varphi\longmapsto \left((x,y)\mapsto \varphi(x\otimes y) \right)$.

\end{arlist}

\end{definitions}
\smallskip

\begin{mypar}\label{mypar:2.4}{\sc Gram's Matrix}\,
Let $V$,   $W$ be finite dimensional $K$-vector spaces with  bases $\mbcalx:=\{x_i \mid i\in I\}$, $I$ finite indexed  set,   $\mbcaly:=\{y_j\mid j\in J\}$, $J$  finite indexed set, respectively.
Then every sesquilinear  function $\Phi\colon V\times W\rightarrow K$ is uniquely determined by the values $\Phi(x_{i}, y_{j})$, $(i,j)\in I\times J$. Conversely, for arbitrary family $c_{ij}\in K$, $(i,j)\in I\times J$, there is a (unique) sesquilinear   function   $\Phi \colon V\times W\to K$ defined by $\Phi(\sum_{i\in I} a_{i}x_{i}, \sum_{j\in J} b_{j}y_{j}) := \sum_{(i,j)\in I\times J} a_{i}\, {b}_{j}\, c_{ij}$.  Moreover, the map
\vspace*{-1mm}
\begin{align*}
\hspace*{15mm} {\rm Sesq}_{K}(V,W) \longrightarrow K^{I\times J} ={\rm M}_{I,J}(K)\,\,,  \ \Phi\longmapsto \left(\Phi(x_{i}, y_{j})\right)_{(i,j)\in I\times J}\,,
\vspace*{-2mm}
\end{align*}
is an isomorphism of $K$-vector spaces.
\smallskip \\
For a sesquilinear  function $\Phi:V\times W \longrightarrow K$ the $I\times J$-matrix \\[1mm]
\hspace*{30mm} $\displaystyle \,\mathscr{G}_{\Phi}(\mbcalx\,;\, \mbcaly):= \left(\Phi(x_{i}, y_{j})\right)_{(i,j)\in I\times J}\in {\rm M}_{I,J}(K)\,$\\[1mm] is called the \so{Gram's matrix}
or the \so{fundamental matrix}
of $\Phi$ with respect to the bases $\mbcalx$ and $\mbcaly$. If $V=W$ and $\mbcalx=\mbcaly$, then we simply write $\,\mathscr{G}_{\Phi}(\mbcalx)$. Further, if $I=J$, then the determinant ${\rm G}_{\Phi}(\mbcalx, \mbcaly):=\Det\,\mathscr{G}_{\Phi}(\mbcalx\,;\, \mbcaly)$ is called the \so{Gram's determinant} with respect to the bases $\mbcalx$ and $\mbcaly$.  If $V=W$ and if $y_{j}=x_{j}$ for all $j\in J=I$,  we simply write ${\rm G}_{\Phi}(\mbcalx)$.
\smallskip

For the computation with Gram's matrices, it is convenient to extend the conjugation of $K$ to matrices
over $K$\,:\,
For a matrix $\mscrA=\left(a_{ij}\right)\!\in\!{\rm M}_{I,J}(K)$ with $I$,  $J$ finite indexed sets,  put $\,\overline{\mscrA}\!:=\!\left(\overline{a}_{ij}\right)\!\in\!{\rm M}_{I, J}(K)$. Then
the map ${\rm M}_{I, J}(K)\rightarrow\!{\rm M}_{I, J}(K)$,
$\mscrA\mapsto \overline{\mscrA}$, is a semilinear (with respect to the conjugation of $K$)  involution of the $K$-vector space ${\rm M}_{I,J}(K)$.
Further, $^{\rm t}\overline{\mscrA} = \overline{^{{\rm t}}{\mscrA}}$
 (where for a $I\times J$-matrix $\mscrA\in\M_{I,J}(K)$, ${}^{\rm t}\mscrA$ denote the transpose of $\mscrA$)
and $\overline{\mscrA\mscrB} =\overline{\mscrA}\,\overline{\mscrB}$ if $\mscrB\in {\rm M}_{J\, R}(K)$, $R$ finite indexed set.
For a square matrix $\mscrA\in {\rm M}_{I}(K)$, $\Det \overline{\mscrA} = \overline{\Det \mscrA}$ and if  $\,\mscrA\in \GL_{I}(K)$, then  $\overline{\mscrA}^{-1}\! = \!\overline{\mscrA^{-1}}$.
\smallskip \\
{\ref{mypar:2.4}.1}\,
{\it Let  $\mbcalx'=(x'_{i})_{i\in I}$,  $\mbcaly'=(y'_{j})_{j\in J}$ be another $K$-bases of $V$,  $W$ and  $\mscrA=(a_{ri})\in \GL_{I}(K)$,  $\mscrB=(b_{sj})\in \GL_{J}(K)$ be the transition matrices of the bases from $\mbcalx$ to $\mbcalx'$,  from $\mbcaly$ to $\mbcaly'$, respectively. Then for a sesquilinear  function $\Phi:V\times W \rightarrow K$, we have  the \so{\rm transformation formula}\,$:$\,\\[1mm]
\hspace*{17mm} $\,{\mscrG}_{\Phi}({\mbcalx}\,;\,{\mbcaly}) =
{}^{{\rm t}}{\mscrA}\,{\mscrG}_{\Phi}({\mbcalx}'\,;\,{\mbcaly}')\,\overline{\mscrB}\quad\hbox{or}\quad
{\mscrG}_{\Phi}({\mbcalx}'\,;\,{\mbcaly}') =
{}^{{\rm t}}{\mscrA}^{-1}\,{\mscrG}_{\Phi}({\mbcalx}\,;\,{\mbcaly})\,\overline{\mscrB}^{-1}\!\!.$ \\[1mm]
In~particular, if  $V=W$, then
$\,{\mscrG}_{\Phi}(\mbcalx)\! = {}^{\rm t}{\mscrA}\,{\mscrG}_{\Phi}({\mbcalx}')\,\overline{\mscrA}$.}

\end{mypar}
\smallskip

\begin{examples}\label{exs:2.5}
\begin{arlist}
\item
\so{(Standard forms})\,   The \so{standard form} on the standard $K$-vector space $K^{(I)}$, $I$ an indexed set,  with the standard basis $e_{i}, i\in I$  of $K^{(I)}$  is the
sesquiliner  form defined by the unit matrix $\mscrE_{I}\in {\rm M}_{I}(K)$
and is denoted by $\langle -,-\rangle$, that is,
$\langle e_{i}, e_{j}\rangle =\delta_{ij}$ for all $i,j\in I$.
Therefore
$\,\bigr\langle (a_i)\,,(b_i)\,\bigr\rangle = \sum_{i\in I} a_i\,\overline{b}_i = {}^{\rm t}{\mscrA}\,\mscrB\,$, where $\,{\mscrA}:=(a_i)\,, {\mscrB}: = (b_i)\in K^{(I)}$ (are column vectors).
In~particular, if $I = \{1,\ldots ,n\}$, then $\langle (a_{1},\ldots , a_{n}), (b_{1},\ldots , b_{n})\rangle  = a_1 \overline{b}_1 + \cdots + a_n  \overline{b}_n$.
\smallskip

\item \so{(Natural Duality)}\,
Let $K$ be an arbitrary field with $\id_{K}$ as conjugation,  $V$ a $K$-vector space and let
$V^*={\rm Hom}_{K}\,(V,K)$ denote the dual space of $V$.
The canonical evaluation map $\,\mathscr{E}:V\times V^*\longrightarrow K$, $\,(x,f)\longmapsto \langle x,f\rangle :=  f(x)$,
is a bilinear and is called the
\so{natural duality} between $V$ and $V^*$. If $V$ is finite dimensional with basis
$\mbcalx=\{x_1, \ldots , x_{n}\}$, and if ${\mbcalx}^{*}=\{x_1^*, \ldots , x^*_{n}\}$ is  the corresponding dual basis, then the Gram's  matrix of this natural duality $\mscrG_{\mathscr{E}}(\mbcalx\,,\,{\mbcalx}^{*}) =\mscrE_{n}$ is the unit matrix in $\M_{n}(K)$.
\end{arlist}
\end{examples}
\smallskip

\begin{mypar}\label{mypar:2.6}{\sc Non-degeneracy  and Complete Duality}\,
An important motivation for the study of sesquilinear functions is  {\it the description of linear form through vectors.} (See Example\,\ref{ex:2.9}).
\smallskip

Let $V$ and $W$ be  $K$-vector spaces and let  $\Phi\colon V\times W\longrightarrow K$ be a  sesquilinear
function. The canonical semilinear maps defined by\,: \,\\[0.5mm]
\hspace*{3mm} $\,\Phi_1 \colon V \longrightarrow W^*\, \  x\longmapsto (\,y\mapsto \overline{\Phi(x,y)}\,)\quad{\rm and}\quad
\Phi_2\colon W \longrightarrow V^*\, \  y\longmapsto \left(\,x\mapsto \Phi(x,y)\,\right)$, \\[0.5mm]
are simple denoted by $\Phi_{1}(x) = \Phi(x,-)$ and $\Phi_{2}(y)= \Phi(-,y)$.

Further, from each one of the map $\Phi_{1}$ resp.  $\Phi_{2}$, one can recover $\Phi$, since
$\,\Phi (x,y) = \overline{\left(\Phi_1 (x)\right)(y)}  = \left(\Phi_2 (y)\right)(y)\,$ for all $x\in V$ and all $y\in W$.
\smallskip

\ref{mypar:2.6}.1\, {\it Suppose that  both $V$, $W$ are finite dimensional over $K$ with bases    ${\mbcalx}=(x_i)_{i\in I}$,   ${\mbcaly}=(y_j)_{j\in J}$, respectively. Then the matrices of the canonical semilinear maps
$\Phi _1$ and $\Phi_2$  with respect to bases $\mbcalx$, ${\mbcaly}^{*}$ and $\mbcaly$, ${\mbcalx}^{*}$, where  ${\mbcalx}^*$ and ${\mbcaly}^*$ are dual bases of $\mbcalx$ and $\mbcaly$, respectively, are  given by\,$:$\\[1mm]
\hspace*{30mm} $\,{\mscrM}^{{\mbcalx}}_{{\mbcaly}^{*}}(\Phi _1) = {}^{\rm t}\overline{{\mscrG}_{\Phi}({\mbcalx}\,;\,{\mbcaly})}\quad\hbox{and}\quad
{\mscrM}^{{\mbcaly}}_{{\mbcalx}^{*}}(\Phi_2) = {\mscrG}_{\Phi}({\mbcalx}\,;\,{\mbcaly})$.} \\[1mm]
Further, since taking the transpose and conjugation,  the rank of a matrix is unaltered, both $\Phi_{1}$ and $\Phi_{2}$ have the same rank. This common rank  of the maps  $\Phi_1$, $\Phi_2$ is called the  \so{rank} of the sesquilinear function  $\Phi$ and is denoted by  ${\rm rank}\,\Phi$. Therefore, {\it $\,{\rm rank}\,\Phi$ is the rank of the Gram's  matrix of $\Phi$ with respect to arbitrary bases of $V$ and $W$.}

\end{mypar}
\smallskip

The case when  $\Phi_1$ and $\Phi_2$ are both injective or both bijective are important\,:

\begin{definition}\label{def:2.7}
Let  $V$ and $W$ be $K$-vector spaces and let $\Phi\colon V\times W\longrightarrow K$ be a  sesquilinear function. We say that
\smallskip \\
(1) $\,\Phi$ is   \so{non-degenerate} if  $\Phi_1$ and $\Phi_2$ are both injective.\\[0.5mm]
(2)  $\,\Phi$ defines a  \so{complete duality} (between  $V$ and $W$) if  $\Phi_1$ and $\Phi_2$ are both bijective.
\end{definition}
\smallskip

\begin{example}\label{ex:2.8}\so{(Trace form)}\,
Let $V$ be a finite dimensional $K$-vector space. The map $\End_{K} V \times \End_{K} V$,  $\,(f,g)\longmapsto \tr (fg),$ is a  symmetric bilinear form on the $K$-vector space ${\rm End}_KV$ of $K$-endomorphisms of $V$ and is called the \so{trace form} on ${\rm End}_KV$.
\smallskip

\ref{ex:2.8}\,1\,
{\it Let $V$ be a finite dimensional $K$-vector space. Then  the trace form defines a complete duality on ${\rm End}_KV$.}
\smallskip

Let  $A$ be a finite (dimensional) $K$-algebra. For an element $x\in A$, $\lambda_{x}:A\to A$ denote  the left multiplication map  on $A$ by $x$. Then the map $A\times A \to K$,
$\,(x,y)\mapsto \tr_K^A(xy)= \tr (\lambda_x\lambda_y)$,
defines  a symmetric bilinear form on  $A$ and is called the \so{trace form} of the $K$-algebra $A$.
The trace form on $A$ reflects many important properties of the $K$-algebra  $A$, see Section\,4.
\smallskip

Note that if $A={\rm End}_{K} V$ , then  the trace form on the $K$-algebra  ${\rm End}_{K} V$ is different from the above introduced trace form on the $K$-vector space   ${\rm End}_{K} V$. Obviously, for every endomorphism $f\in{\rm End}_{K} V$\,:
$\,\tr_K^{{\rm End}_{K} V}f = n\,\cdot \tr f\,$, $\enskip n:={\rm Dim}_KV\,$.
\end{example}
\smallskip

\begin{example}\label{ex:2.9}(\,\so{Gradient of a linear form}\,)\,
For a finite dimensional $K$-vector space $V$ the natural duality between $V$ and
$V^*$ (see Example\,\ref{exs:2.5}\,(2)) is a complete duality, since
its Gram's matrix with respect to dual bases is the unit matrix. Further,
in this case $\Phi_{1}:V\rightarrow (V^{*})^{*}$ is the canonical evaluation map $x\mapsto \mathscr{E}_{x}: f\mapsto \mathscr{E}_{x}(f):=f(x)$ and $\Phi_{2}:V^{*}\rightarrow V^{*}$ is the identity map $\id_{V^{*}}$.
 In~particular,  for  every linear form   $\varphi\colon V^*\to K$,  there exists a unique vector $x\in V$ (which is independent on $\varphi$) such that $\varphi (f)=f(x)$ for every $f\in V^{*}$, i.\,e.  $\varphi$ is the evaluation of the linear forms in  $V^*$ at the vector $x\in V$. \\[1mm]
If $V$ is not finite dimensional, then the natural duality is non-degenerate but never complete.
This follows from the fact that  for every  $x\in V$ (also in the infinite dimensional case) can be extended to a basis $V$ and hence  there exists a linear form $f\in V^*$ with $\langle x,f \rangle = f(x)\ne 0$.
\smallskip

More generally, if $\,\Phi:V\times W \longrightarrow K$ defines a complete duality, then one can use $\Phi_{1}$ and $\Phi_{2}$ to identify the $K$-vector spaces $V$ and $W^{*}\!\!$, resp. $W$ and $V^{*}\!\!$. Therefore, for every linear form $f\in W^{*}\!\!$, there exists a unique vector $x_{f}\in V$ with $f = \overline{\Phi(x_{f}, -)}$, and for every linear form $\varphi\in V^{*}\!\!$, there exists a unique vector $y_{\varphi}\in W$ with $\varphi=\Phi(-, y_{\varphi})$. The vectors $x_{f}$ resp. $y_{\varphi}$ are called the \so{gradients of}  $f$ resp. $\varphi$ (with respect to $\Phi$) and are denoted by ${\rm grad}\,f$ resp. ${\rm grad}\,\varphi$.  Therefore the linear forms on $W$ resp. $V$ correspond to their respective gradients, i.\,e. $f=\Phi_{1}({\rm grad}\,f)$ and
$\varphi=\Phi_{2}({\rm grad}\,\varphi)$.
\end{example}
\smallskip

 \begin{mypar}\label{mypar:2.10}{\sc Orthogonality,    perpendicular  relation  and  Hermitian forms}\, The concept of orthogonality has its origin in Euclidean geometry.
 \smallskip

 Let  $V$ and $W$ be $K$-vector spaces and let $\Phi\colon V\times W\longrightarrow K$ be a  sesquilinear function. \smallskip \\
(1)\, The vectors $x\in V$ and $y\in W$ are called \so{orthogonal} or \so{perpendicular} to each other with respect to $\Phi$ if $\Phi(x,y)=0$. In this case we write $x\bot_{\Phi}\, y$ or simply $x\bot y$ (if $\Phi$ is fixed).\\[0.75mm]
(2) Two subsets  $M\subseteq V$ and $N\subseteq W$ are called  \so{orthogonal}
if  $x\bot y$ for all  $x\in M$ and for all $y\in N$. In this case we write  $M\bot N$. Futher, we put \\[1mm]
 \hspace*{25mm} $M^\bot:=\{y\in W\mid M\bot\{y\}\}\quad\hbox{and}\quad {}^\bot N:=\{x\in V\mid \{x\}\bot N\}$. \\[1mm]
Obviously, $M^\bot$ and ${}^\bot\, N$ are $K$-subspaces of $W$ and $V$, resp.
\smallskip

For example, if  $f\in W^{*}$ is a linear form with a gradient (see Example\,\ref{ex:2.8})
${\rm grad}\,f\in V$, i.\,e. $f(y) = \overline{\Phi({\rm grad}\,f, y)}$  for all $y\in W$, then
$\Ker f = \{{\rm grad}\,f\}^\bot$. Analogously, if  a linear form $\varphi\in V^{*}$ with a gradient ${\rm grad}\,\varphi \in W$, then $\Ker \varphi= {}^\bot\,\{{\rm grad}\,\varphi\}$.
\medskip

Note that if  $\Phi:V\times V \longrightarrow K$ is a sesquilinear form on $V$, then
for a subset $M\subseteq V$, the subsets $M^\bot \!=\!\{y\in V\mid M \bot \{y\}\}$ and ${}^\bot\,M\!=\!\{x\in V\mid  \{x\}\bot M\}$ are {\it not} equal in general, since the relation of perpendicularity is not symmetric.
To remove this difference, one considers the  symmetric   (resp.  Hermitian,  skew-Hermitian)  forms if the conjugation of $K$ (see  \ref{notass:2.1}) is $\id_{K}$ (resp. $\neq \id_{K}$)\,:
\smallskip

 \ref{mypar:2.10}.1\, {\sc Definition}\,   Let $V$ be a finite dimensional  $K$-vector space and let $\Phi:V\times V \longrightarrow K$ be a sesquilinear form on $V$,  We say that $\Phi$ is \so{Hermitian} (resp. \so{skew-Hermitian}) if $\Phi(x,y) =\overline{\Phi(y,x)}$  (resp.
 $\Phi(x,y) = -\overline{\Phi(y,x)}$) for all $x$, $y\in V$. \\[1mm]
 With the notation and assumptions in \ref{notass:2.1}, we note that  the term ``Hermitian '' and ``skew-Hermitian''  mean ``symmetric'' and ``skew-symmetric'' if the conjugation of $K$ is $\id_{K}$.  If the conjugation of $K$ is $\neq \id_{K}$, then sometimes we use the terms ``pure-Hermitian'' and ``pure-skew-Hermitian'' forms on $V$. In the case $K=\C$ with the usual complex-conjugation, the Hermitian (resp. skew-Hermitian) forme are simply called
 \so{complex-Hermitian} (resp. \so{complex-skew-Hermitian}).
 \smallskip

 \ref{mypar:2.10}.2\,\,  {\it If $\,\Phi_1 \colon V \longrightarrow V^*$ and $\Phi_2\colon V \longrightarrow V^*$ are  the canonical semilinear maps  associated  to the sesquilinear form $\Phi$ on $V$, {\rm see \ref{mypar:2.6}}, then   $\Phi$ is Hermitian $($resp. skew-Hermitian$)$ if and only if $\Phi_{1}=\Phi_{2} ($resp. $\Phi_{1}=-\Phi_{2})$.}\\[1mm]
 Further, for a  Hermitian (resp. skew-Hermitian) form $\Phi$ on $V$, the relation $\bot$ on $V$ is symmetric. In this case the subspace ${}^\bot V=V^\bot=\Ker \Phi_{1}=\Ker \Phi_{2}$  of $V$ is called the \so{degeneration space}\, or, \, the \so{radical} of $\Phi$ and is also denoted by ${\rm Rad}\,(V,\Phi)={\rm Rad}\,\Phi$.
 \smallskip

\ref{mypar:2.10}.3\,\,
{\it A sesquilinear form $\Phi:V\times V \to K$  on a finite dimensional $K$-vector space $V$  is Hermitian $($resp. skew-Hermitian$)$ if  and only if  the Gram's matrix $\mscrG_{\Phi}(\mbcalx) = (\Phi (x_i,x_j)) \in {\rm M}_{I}(K)$ of $\,\Phi$ with respect to every basis $\mbcalx=\{x_{i}\mid i\in I\}$  of $V$  is Hermitian $($resp. skew-Hermitian$)$.} \\[1mm]
Recall that a square matrix $\mscrA\in \M_{I}(K)$, $I$ finite indexed set,  is \so{Hermitian} (resp. \so{skew-Hermitian} if $\,\mscrA ={}^{\rm t}\overline{\mscrA}$ (resp. $\mscrA =-{}^{\rm t}\overline{\mscrA}$.
A matrix $\mscrA\in \M_{I}(K)$ is \so{symmetric} (resp. \so{skew-symmetric} if $\,\mscrA ={}^{\rm t}{\mscrA}$ (resp. $\mscrA =-{}^{\rm t}{\mscrA})$.
 \smallskip

 \ref{mypar:2.10}.4\,\,   {\it Let $V$ be a finite dimensional $K$-vector space
 and let ${\mbcalx}\!=\!\{x_{i}\mid i\in I\}$ be a basis of  $V$. {\rm (recall that $K'$ is the fixed field of $K$, see\,\ref{notass:2.1})}
 The map
 $\,\Phi\longmapsto {\mscrG}_{\Phi}(\mbcalx)\,$ is a $K'$-linear isomorphism of the $K'$-vector space of Hermitian
 $($resp. skew-Hermitian$)$ forms on $V$ onto the
$K'$-vector space of Hermitian $($resp. skew-Hermitian$)$  matrices in ${\rm M}_{I}(K)$.
Moreover, if
${\mbcalx}'=\{x'_{i}\mid i\in I\}$ is another basis of $V$ with transition matrix $\mscrA=(a_{ij})\in \GL_{I}(K)$, {\rm  i.\,e.}$x_{j}= \sum_{i\in I}\,a_{ij}x'_{i}$, then the Gram's matrices ${\mscrG}_{\Phi}({\mbcalx})$ and ${\mscrG}_{\Phi}({\mbcalx}')$ are related by the rule\,: \\[0.5mm]
\hspace*{20mm} ${\mscrG}_{\Phi}({\mbcalx}) = {}^{\rm t}\mscrA\,{\mscrG}_{\Phi}({\mbcalx}')\,\overline{\mscrA}\quad$   or
$\quad {\mscrG_{\Phi}({\mbcalx}'}) = {}^{\rm t}\mscrA^{-1}\,{\mscrG}_{\Phi}({\mbcalx})\,\overline{\mscrA}^{-1}$.}
\medskip

\ref{mypar:2.10}.5\,\, In important cases a sesquilinear  forms  on a $K$-vector space $V$ are  completely determined by its values on the  \so{diagonal} $\,\Delta_{\,V} = \{(x,x) \bigm| x\in V\}$. More precisely, we have\,:\, \\[1mm]
\ref{mypar:2.10}.5a\,\,  {\sc Polarisation identity}\,
Let   $V$ be a $K$-vector space. Then\,:\, \\[1mm]
(1) If  the conjugation (see\,\ref{notass:2.1}) of $K$ is $\neq \id_{K}$, then for every sesquilinear form $\Phi:V\times V \to K$, for all $x,y\in V$  and $a\in K$ with $\overline{a}\neq a$,  (using Cramer's rule) we have\,:\\[0.5mm]
\hspace*{2mm} $\Phi (x,y)\!=\!\textstyle{\frac{1}{a-\overline{a}}}\,\bigl(\Phi (ax+y\,, ax+y)\!-\!\overline{a}\,\Phi (x+y,x+y)\!-\!\overline{a}(a\!-\!1)\,\Phi (x,x)\!-\!(1\!-\!\overline{a})\,\Phi (y,y)  \bigr)$\,and \\[0.75mm]
\hspace*{2mm} $\Phi (y,x)\!=\!\textstyle{\frac{1}{\overline{a}-a}}\,\bigl(\Phi (ax+y\,, ax+y)\!-\!a\,\Phi (x+y,x+y)\!-\!a\,(\overline{a}\!-\!1)\,\Phi (x,x)\! -\!(1\!-\!a)\,\Phi (y,y) \bigr)\,$. \\[1mm]
(2) If ${\rm Char}\,K\neq 2$, then for  every symmetric bilinear $\Phi:V\times V \rightarrow K$ form  and for all $x,y\in V\!$, we have\,: \,\\[0.5mm]
\hspace*{2mm} $\Phi (x,y)\!=\!\textstyle{\frac{1}{2}}\,\bigl(\Phi (x+y\,, x+y)\!-\!\Phi (x, x)\!-\!\Phi (y, y) \bigr)\!=\!\textstyle{\frac{1}{4}}\,\bigl(\Phi (x+y\,, x+y)\!-\!\Phi (x-y, x-y) \bigr)$.
\medskip

\ref{mypar:2.10}.5b\,\,  {\sc Corollary}\,
{\it If  the  conjugation of $K$ is $\neq \id_{K}$ {\rm (see \ref{notass:2.1})} and $V$ is a $K$-vector space, then a sesquilinear form  $\Phi:V\times V \to K$ on a $K$-vector space $V$ is  Hermitian $($resp. skew-Hermitian$)$  if and only if $\Phi(x,x) = \overline{\Phi(x,x)}\,($resp. $\Phi(x,x) = -\overline{\Phi(x,x)})$
for all $x\in V$, {\rm i.\,e.}  $\Phi(x,x)\in K'$
{\rm (the fixed field of $K$ with respect to the conjugation of $K$, see\,\ref{notass:2.1})}.  In~particular, a complex-sesquilinear form is complex-Hermitian $($resp. complex-skew-Hermitian$)$ if and only if the values   the values $\Phi(x,x)$, $x\in V$,  are all real $($resp.  purely-imaginary$)$.}
\medskip

\ref{mypar:2.10}.5c\,\, {\sc Corollary}\,
{\it  Let $V$ be vector space over the field $K$ of characteristic $\neq 2$.  Then a symmetric bilinear form
 $\Phi\colon V\times V\to K$ on $V$ is the  zero form if and only if $\,\Phi(x,x)=0\,\,$  for all $x\in V$.}

\end{mypar}
\smallskip

\begin{mypar}\label{mypar:2.11}\,{\sc Orthogonal  direct  sums}\,
Let $V_{i}$, $i\in I$; $W_{i}$, $i\in I$ be two families of $K$-vector spaces and let $\,\Phi_{i}:V_{i}\times W_{i}\to K$ be a family of sesquilinear functions. Then the map
\vspace*{-1mm}
\begin{align*}
\Phi:\bigl(\bigoplus_{i\in I} V_{i}\bigr)\times \bigl(\bigoplus_{i\in I} W_{i}\bigr) \longrightarrow K\,, \, \left((x_{i}), (y_{i})\right)\longmapsto \sum_{i\in I} \Phi_{i}(x_{i}, y_{i})
\vspace*{-2mm}
\end{align*}
is a sesquiinear function and its restrictions $\,\Phi\,\vert\,V_{i}\times W_{i} = \Phi_{i}$ for all $i\in I$, where $V_{i}$ (resp. $W_{i}$) is considered canonically as subspace of $\bigoplus_{i\in I} V_{i}$ (resp. $\bigoplus_{i\in I} W_{i}$). Further, $V_{i}\bot W_{j}$ with respect to $\Phi$ for all $i,j\in I$, $i\neq j$. This sesquilinear function $\Phi$ is called the  \so{orthogonal direct sum} of the family $\Phi_{i}$, $i\in I$ and is denoted by  $\,\displaystyle \operp_{i\in I}\Phi_{i}$.
Conversely,  if $\Phi:V\times W\to K$ is a sesquilinear function and $V$(resp. $W$) is a direct sum of the $K$-subspaces $V_{i}$, $i\in I$ (resp. $W_{i}$, $i\in I$) with $V_{i}\perp W_{j}$ for all $i$, $j\in I$, $i\neq j$ with respect to $\Phi$, such that $\Phi(\sum_{i\in I} v_{i}, \sum_{j\in I} w_{j})= \sum_{i\in I} \Phi(v_{i}, w_{i})$ for $v_{i}\in V_{i}$, $w_{j}\in W_{j}$. Then we say that $\Phi$ is the orthogonal direct sum of the $\Phi_{i}:= \Phi\,\vert\,V_{i}\times W_{i}$, $i\in I$. In~particular, if $V=W$ and $V_{i}=W_{i}$, $i\in I$, then  $V$ is the \so{orthogonal direct sum of the subspaces} $V_{i}$, $i\in I$, with respect to $\Phi$ and is denoted by $V=\operp_{i\in I}V_{i}$.
\end{mypar}
 \smallskip

\begin{mypar}\label{mypar:2.12}{\sc Orthogonal basis}\,\,
Let $V$ be a $K$-vector space and let $\Phi:V\times V\rightarrow K$ be a sesquilinear form on $V$.
A  a family of vectors $x_i$, $i\!\in\!I$,  in $V$ is called  \so{orthogonal} with respect to $\Phi$ if $x_i\perp x_j$ for all  $i$, $j\!\in\!I$ with $i\neq j$.  Moreover, if  $\Phi(x_i,x_i)\!=\!1$ for all  $i\!\in\!I$, then it is called \so{orthonormal} with respect to $\Phi$.
\smallskip

If  $x_i$, $i\in I$,  is an orthogonal basis of $V$ with respect to $\Phi$, then
$V$ is the orthogonal  direct sum of the $1$-dimensional subspaces  $Kx_i$, $i\in I$. Moreover, if $I$ is finite, then the Gram's matrix of $\Phi$ with respect to the basis $x_i$, $i\in I$,  is the diagonal matrix ${\rm Diag}(\Phi(x_i,x_i))_{i\in I}$.
\smallskip

The following Decomposition Theorem\,\ref{decthm:2.13} guarantees the existence of orthogonal bases (with respect to Hermitian forms)  is the starting point for the classification of Hermitian forms\,:
\end{mypar}
\smallskip

\begin{decthm}\label{decthm:2.13}\,
{\it Let $K$ be a field with notations and assumptions as in {\rm \ref{notass:2.1}} and let   $\Phi:V\times V\rightarrow K$ be a sesquilinear form on a finite dimensional $K$-vector space $V$.   Then in each of  the following cases\,:  \\[1mm]
{\rm (a)}\,  The conjugation of $K$ {\rm (see\,\ref{notass:2.1})} is $\neq \id_{K}$ and $\Phi$ is Hermitian or skew-Hermitian, \\
{\rm (b)}\, ${\rm Char}\, K\neq 2$ and $\Phi$ is a symmetric bilinear form, \\[0.5mm]
$V$ has an orthogonal basis $\mbcalx\!=\!\{x_{1}, \ldots , x_{n}\}$, $n\!=\!\Dim_{K}V$, with respect to $\Phi$. In otherwords\,$:$\, $V$ is the ortho\-gonal direct sum  $V\!=\!\operp_{i=1}^{n} Kx_{i}$ into $1$-dimensional subspaces $Kx_{i}$, $i\!=\!1,\ldots , n$, with respect to $\Phi$ and $\Phi = \operp_{i=1}^{n} \Phi\vert Kx_{i}$ is the orthogonal direct sum of its restrictions $\Phi\vert Kx_{i}$, $i=1,\ldots , n$. \\[1mm]
{\rm Matrix formulation\,:}\, If either $\mscrG\in \M_{I}(K)$, $I$ finite indexed set,  is a Hermitain or skew-Hermitian matrix, or if the conjugation is $=\id_{K}$,  ${\rm Char}\,K\neq 2$ and  $\mscrG$ is a symmetric matrix, then there exists an invertible matrix $\mscrA\in\GL_{I}(K)$ such that ${}^{\rm t}\mscrA\mscrG\overline{\mscrA}$ is a diagonal matrix.
}
 \end{decthm}

 \begin{proof}
 Use  induction on $\Dim_{K} V$,  \ref{mypar:2.10}.5a, \ref{mypar:2.10}.5b and \ref{mypar:2.10}.5c.
 \end{proof}
\smallskip

\begin{mypar}\label{mypar:2.14}{\sc Automorphisms and Congruence}\,
Let $\Phi :V\times V\rightarrow K$ and   $\Psi :W\times W\rightarrow K$  be sesquilinear forms on the $K$-vector spaces $V$ and $W$, respectively. A map  $f\,\colon V\rightarrow W$ is called a  \so{homomor\-phism}
of  $(V,\Phi)$ in $(W,\Psi)$ if it is $K$-linear and is compatible with the forms $\Phi$ and $\Psi$, i.\,e.
$\Phi(x,y)=\Psi(f(x),f(y))$ for all  $x,y\in V$.  A bijective homomorphism  $f:(V,\Phi)\to (W,\Psi)$ is called an  \so{isomorphism} of $(V,\Phi)$ onto $(W,\Psi)$.
\smallskip

A  homomorphism $(V,\Phi)\to (V,\Phi)$  of is called an   \so{endomorphism} of $(V,\Phi)$ or of $\Phi$. The set of endo\-morphisms $\End_{K}(V,\Phi)$ (with composition) is a monoid.  An   isomorphism $(V,\Phi)\rightarrow(V,\Phi)$ is called an  \so{automorphism} of $(V,\Phi)$ or of $\Phi$. The set of automorphisms $\Aut_{K}(V,\Phi)$ of $(V,\Phi)$ is the unit group of the monoid  $\End_{K}(V,\Phi)$ and is called the \so{automorphism group} of  $(V,\Phi)$ or  of $\Phi$ .
\smallskip

If there exists an isomorphism from $(V,\Phi)$ onto $(W,\Psi)$, then $(V,\Phi)$ and $(W,\Psi)$ or also the forms $\Phi$ and $\Psi$ said to be \so{congruent}. If  $f: (V,\Phi)\rightarrow (W,\Psi)$ is an isomorphism, then  the map ${\rm Aut}\,\Phi \to {\rm Aut}\,\Psi$,  $g\mapsto fgf^{-1}$ is an isomorphism of groups.
\smallskip

Two square matrices  ${\mscrC}$, ${\mscrC}'\in {\rm M}_n(K)$ are said to be \so{congruent} if there exists an invertible matrix ${\mscrA}\in {\rm GL}_{n}(K)$ with  ${\mscrC} = {}^{\rm t}{\mscrA}\,{\mscrC}'\,{{\mscrA}}$.
\smallskip

On  sesquilinear forms on finite dimensional $K$-vector spaces (resp. of square matrices over $K$) the relation of  ``being congruent'' is an equivalence relation.
\medskip

\ref{mypar:2.14}.1\,
{\it Let $V$,  $W$ be finite dimensional $K$-vector spaces with  $\Dim_{K} V\!=\!\Dim_{K} W$,  $\mbcalx\!=\!\{x_i\mid i\in I\}$,   $\mbcaly\!=\!\{y_i\mid  i\in I\}$ bases of $\,V$, $W$ and let  $\Phi$,  $\Psi$  be sesquilinear forms on $V$,   $W$, resp.
Further,  let $f:V\rightarrow  W$ be an  $K$-isomorphism  of vector spaces.
Then $f$ is an isomorphism $(V,\Phi) \rightarrow (W,\Psi)$ if and only if
$\,{\mscrG}_\Phi(\mbcalx) = {}^{\rm t} {\mscrM^{\mbcalx}_{\mbcaly}(f)}\,{\mscrG}_\Psi(\mbcaly)\,{\overline{\mscrM^{\mbcalx}_{\mbcaly}(f)}}$.
In~particular,   $\Phi$ and $\Psi$ are congruent if and only if there exists
$\,{\mscrA}\!\in\!\GL_I(K)$ with  ${\mscrG}_\Phi(\mbcalx) = {}^{\rm t}{\mscrA}\,{\mscrG}_\Psi(\mbcaly)\,{\overline{\mscrA}}$.}
\smallskip

\ref{mypar:2.14}.2\, {\sc Corollary}\,
Let  $\Phi$ be a sesquilinear form on a finite dimensional $K$-vector space $V$ with basis $\mbcalx=\{x_{i}\mid i\in I\}$. Then an automorphism $f\in\Aut_{K}V$  is an automorphism of $(V,\Phi)$ if and only if $\,\mscrG_{\Phi}({\mbcalx}) =
{}^{\rm t}{\mscrM^{\mbcalx}_{\mbcalx}(f)}\,\mscrG_{\Phi}({\mbcalx})\,\overline{\mscrM^{\mbcalx}_{\mbcalx}(f)}$.
\end{mypar}
\smallskip

\begin{mypar}\label{mypar:2.15}{\sc Classification\,  problem\,  for\,  sesquilinear\, forms}\,
The classification problem for the ses\-quilinear forms on finite dimensional $K$-vector spaces is to find a
\textit{well arranged}  representative system for the equivalence classes of congruent sesquilinear forms.\\[1mm]
For example, from the Decomposition Theorem\,\ref{decthm:2.13} it follows immediately that\,:\\[1mm]
\ref{mypar:2.15}.1\,
{\it Let $K$ be a field with notation and assumptions as in {\rm \ref{notass:2.1}}. Then  every pure-Hermitian or pure-skew-Hermitian $($resp. if $\,{\rm Char}\,K\neq 2$, then every symmetric$)$ matrix is congruent to a diagonal matrix $\Diag(c_{i})_{i\in I}\in \M_{I}(K)$, $I$ finite indexed set.
Moreover,  the  form  $K^{I}\times K^{I} \rightarrow K$   defined by  $(e_i, e_j)\mapsto\delta_{ij}c_i$, where $e_{i}$, $i\in I$, is the standard basis of $K^I\!$ is also  congruent to the form defined by
$((a_i),(b_i))\mapsto\sum_{i\in I}a_i \overline{b}_i c_i$. } \\[1mm]
The above  form defined in \ref{mypar:2.15}.1 (and every other form which is congruent to this form) defined by the diagonal matrix $\Diag(c_{i})_{i\in I}\in \M_{I}(K)$ is denoted by  $[c_i]_{i\in I}$ and for $I=\{1,\ldots,n\}$ also by  $[c_1,\ldots,c_n]$.
The form  $[c_i]_{i\in I}$ is the orthogonal direct sum of the forms  $[c_i]\,:(a,b)\mapsto a\overline{b}c_i$, $i\in I$,  on $K$, therefore\,: \,\\[1mm]
\hspace*{45mm} $[c_i]_{i\in I}=\operp_{i\in I}\,[c_i]$.
\medskip

In general,  it is difficult to classify the forms  $[c_i]_{i\in I}$ up to congruence. Obviously, the form   $[c_i]_{i\in I}$  is congruent to the form $[a_i^{2}c_i]_{i\in I}$, where  $a_i\in K^\times\!\!$,  $i\in I$, since this is the transition of the basis   $e_i$, $i\in I$,  to the basis  $a_ie_i$, $i\in I$. Therefore, one can replace the elements  $c_i$ (if non-zero)  by their images in the residue class group $\,K^\times/ {\rm N}(K^\times\!)$ of $K^\times$ modulo of the subgroup
${\rm N}(K^{\times}\!):=\{a\overline{a}\mid a \in K^{\times}\}$. Note that ${\rm N}(K^{\times}\!)\subseteq K'^{\times}\!\!$, where $K'$ is the  fixed field of the conjugation of $K$ (see\,\ref{notass:2.1}) and if the conjugation is $\id_{K}$, then ${\rm N}(K^{\times}\!) ={}^{2}K^{\times}\!$ is the subgroup of the quadratic-units in $K$.

\end{mypar}
\smallskip

If the form $[c_{i}]_{i\in I} $ is Hermitian, then $c_{i}\in K'$ for every $i\in I$ and from the Decomposition Theorem\,\ref{decthm:2.13}, we have\,:
\smallskip

\begin{theorem}\label{thm:2.16}$\!\!\!$
Let $K$ be a field with notation and assumptions as in {\rm \ref{notass:2.1}} and
${\rm N}(K^{\times}\!) = K'^{\times}\!\!$,   where  $K'$ is the fixed field  of the conjugation of $K$.    Then every Hermitian form of rank $r$ on an $n$-dimensional $K$-vector space is congruent to the form  $\,[1,\ldots,1,0,\ldots,0]$,  where  $1$ occurs $r$ times and $0$ occurs  $n-r$ times. In~particular, every non-degenerate Hermitian form on an $n$-dimensional $K$-vector space is congruent to the standard form $[1,\ldots,1]$ on  $K^n$.
\end{theorem}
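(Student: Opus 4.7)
My plan is to reduce to the diagonal case via the Decomposition Theorem and then eliminate the non-zero scalars on the diagonal one by one using the hypothesis $\mathrm{N}(K^{\times})=K'^{\times}$.

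First, since $\Phi$ is Hermitian, I would apply the Decomposition Theorem \ref{decthm:2.13} to obtain an orthogonal basis $\mbcalx=\{x_1,\ldots,x_n\}$ of $V$. With respect to this basis, $\mscrG_{\Phi}(\mbcalx)=\Diag(c_1,\ldots,c_n)$ with $c_i=\Phi(x_i,x_i)$. Because $\Phi$ is Hermitian, Corollary \ref{mypar:2.10}.5b (in the conjugation $\neq \id_K$ case) or the symmetry condition (in the $\id_K$ case) forces $c_i\in K'$ for every $i$. Next, observe that the rank of $\Phi$ equals the rank of $\mscrG_{\Phi}(\mbcalx)$ by \ref{mypar:2.6}.1, and since this matrix is diagonal, its rank is precisely the number of non-zero $c_i$'s. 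After permuting the basis vectors (which preserves orthogonality and congruence class), I may assume $c_1,\ldots,c_r\in K'^{\times}$ and $c_{r+1}=\cdots=c_n=0$.

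The key step is then to normalize each non-zero $c_i$ to $1$. By hypothesis $\mathrm{N}(K^{\times})=K'^{\times}$, for each $i\in\{1,\ldots,r\}$ there exists $b_i\in K^{\times}$ with $b_i\,\overline{b_i}=c_i^{-1}$. Replacing $x_i$ by $x_i':=b_i x_i$ for $i\le r$ and keeping $x_j':=x_j$ for $j>r$ yields a new basis $\mbcalx'$ which is again orthogonal with respect to $\Phi$, and a direct computation using sesquilinearity gives
\[
\Phi(x_i',x_i')=\Phi(b_i x_i, b_i x_i)=b_i\overline{b_i}\,\Phi(x_i,x_i)=b_i\overline{b_i}\,c_i=1
\]
for $i\le r$, while $\Phi(x_j',x_j')=0$ for $j>r$. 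Hence $\mscrG_{\Phi}(\mbcalx')=\Diag(1,\ldots,1,0,\ldots,0)$, proving $\Phi$ is congruent to $[1,\ldots,1,0,\ldots,0]$ with $r$ ones and $n-r$ zeros. The ``In particular'' clause is then immediate, since non-degeneracy means $\Phi_1$ and $\Phi_2$ are injective, so $\mathrm{rank}\,\Phi=n$, i.e. $r=n$.

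The only potentially subtle point is the appeal to the norm hypothesis and the sesquilinear scaling rule $\Phi(bx,bx)=b\overline{b}\,\Phi(x,x)$ rather than the bilinear one $a^2\Phi(x,x)$ hinted at in the discussion in \ref{mypar:2.15}; in the Hermitian setting the correct transformation uses $b\overline{b}\in\mathrm{N}(K^{\times})$, which is exactly why the hypothesis $\mathrm{N}(K^{\times})=K'^{\times}$ is the right one to ensure that every non-zero diagonal entry, which already lies in $K'^{\times}$, can be realized as a norm and thus absorbed into the basis change. I do not foresee a genuine obstacle beyond being careful with this distinction.
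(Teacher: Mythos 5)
Your proof is correct and follows essentially the route the paper intends: Theorem~\ref{thm:2.16} is stated as a consequence of the Decomposition Theorem~\ref{decthm:2.13} together with the observation in \ref{mypar:2.15} that a diagonal entry $c_i$ may be replaced by any element of its class in $K^{\times}/\mathrm{N}(K^{\times})$, which is exactly your rescaling $x_i\mapsto b_ix_i$ with $b_i\overline{b_i}=c_i^{-1}$. Your parenthetical remark is also well taken: the correct scaling rule in the sesquilinear setting is $\Phi(bx,bx)=b\overline{b}\,\Phi(x,x)$, consistent with the paper's subsequent reduction modulo $\mathrm{N}(K^{\times})=\{a\overline{a}\mid a\in K^{\times}\}$, rather than the $a_i^{2}c_i$ written in \ref{mypar:2.15}, which is accurate only when the conjugation is $\id_K$.
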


\begin{corollary}\label{cor:2.17}
Let $K$ be a field with ${\rm Char}\,K\neq 2$ and $K={}^{2}K$
{\rm (e.\,g., if $K$ is alg\-ebrai\-cally closed, $K=\C$)}. Then  all symmetric matrices in ${\rm M}_{n}(K)$ of equal rank are congruent. The diag\-onal matrices
$\,{\rm Diag}\,(0,\ldots,0)$, ${\rm Diag}\,(1,0,\ldots,0)$, $\ldots,$ \
${\rm Diag}\,(1,\ldots,1)=\mscrE_n\,$
form a complete representative system for the congruence classes of symmetric matrices over $K$.
\end{corollary}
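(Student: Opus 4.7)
The plan is to deduce this corollary directly from Theorem \ref{thm:2.16} by specialising to the involution $\kappa = \id_{K}$. First I would verify that the hypotheses of Theorem \ref{thm:2.16} are met under the assumptions of the corollary. Since $\kappa = \id_{K}$, the fixed field is $K' = K$, and by \ref{notass:2.1} the requirement $\mathrm{Char}\,K \neq 2$ is exactly the assumption made. In this setting Hermitian forms coincide with symmetric bilinear forms (see \ref{mypar:2.10}), and the norm group is $\mathrm{N}(K^\times) = \{a\overline{a} \mid a \in K^\times\} = \{a^2 \mid a \in K^\times\} = {}^{2}K^\times$. The hypothesis $K = {}^{2}K$ together with $\mathrm{Char}\,K \neq 2$ gives $K^\times = {}^{2}K^\times$, so $\mathrm{N}(K^\times) = K'^\times$ is satisfied.

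Next I would translate the statement of Theorem \ref{thm:2.16} into the language of matrices via \ref{mypar:2.14}.1 and \ref{mypar:2.10}.3: a symmetric matrix $\mscrG \in {\rm M}_n(K)$ is the Gram matrix of a symmetric bilinear form on $K^n$ with respect to the standard basis, and two such matrices are congruent (in the sense of \ref{mypar:2.14}) precisely when the corresponding forms are congruent. Theorem \ref{thm:2.16} then tells us that if the form has rank $r$, it is congruent to $[1,\ldots,1,0,\ldots,0]$ with $r$ ones and $n-r$ zeros; equivalently, $\mscrG$ is congruent to $\mathrm{Diag}(\underbrace{1,\ldots,1}_{r},\underbrace{0,\ldots,0}_{n-r})$. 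This proves the first assertion that symmetric matrices of equal rank are congruent.

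Finally I would verify the listed matrices form a complete representative system. Existence of a representative of each congruence class is the content of the previous paragraph, since rank ranges over $\{0, 1, \ldots, n\}$. For the uniqueness (pairwise non-congruence of the $n+1$ listed diagonal matrices), I would invoke the rank invariance noted in \ref{mypar:2.6}.1: the rank of a sesquilinear form is the rank of its Gram matrix with respect to any basis, hence congruent matrices have the same rank. Since the matrix $\mathrm{Diag}(\underbrace{1,\ldots,1}_{r},\underbrace{0,\ldots,0}_{n-r})$ has rank $r$, these $n+1$ diagonal matrices have pairwise distinct ranks and therefore lie in pairwise distinct congruence classes.

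The argument is essentially bookkeeping; no step presents a genuine obstacle since all the substantive work has been done in Theorem \ref{thm:2.16} and the Decomposition Theorem \ref{decthm:2.13}. The only point worth attention is the clean identification $\mathrm{N}(K^\times) = {}^{2}K^\times$ in the case $\kappa = \id_K$, which is explicitly recorded at the end of \ref{mypar:2.15} and makes the specialisation immediate.
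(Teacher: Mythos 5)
Your proposal is correct and follows exactly the route the paper intends: the corollary is stated as an immediate consequence of Theorem~\ref{thm:2.16} (the paper supplies no written proof), and your specialisation to $\kappa=\id_{K}$, the identification ${\rm N}(K^{\times})={}^{2}K^{\times}=K^{\times}$ under the hypothesis $K={}^{2}K$, and the rank-invariance argument for pairwise non-congruence are precisely the bookkeeping steps needed. No gaps.
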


%}

\section{Type and signature of  Hermitian forms}

In this section, we recall the classification of symmetric and Hermitian forms on finite dimensional vector spaces over a real closed field and its algebraic closure, up to congruence, see\,Definition\,\ref{mypar:2.10}.1.
Most of these results can be found in the  graduate text books,  \cite[Ch.\,V, \S12]{StorchWiebe2011},  \cite[Ch.\,IX]{SchejaStorch1988}  or \cite[Ch.\,11]{Jacobson1989}, \cite[Ch.\,7]{Artin1994}, or \cite[Ch.\,XV]{Lang2002}. However,  for the sake of completeness, we recall them without proofs.

%{\small

\begin{mypar}\label{mypar:3.1}{\sc Notation}\,\,(See also \ref{notass:2.1})\,
Let $K$ be  a \textit{real closed field}\,\footnote{\label{foot:2}\,{\bf Real closed fields}\, A field $K$ is called  \so{real closed} if it is \textit{real}, i.\,e.
 if for all $a_1,\dots,a_n \in K$,   \  $a_1^2\!+\!\cdots\!+\!a_n^2\!=\!0$ implies $a_1\!=\!\cdots\!=\!a_n\!=\!0$.
and if it has  no nontrivial real algebraic extension $L\,\vert\, K,$ $L \neq K$.  For example, the field $\R$ of real numbers is real closed. The algebraic closure of $\Q$ in $\R$ is real closed. The field $\Q$  is  real, but not real closed. In 1927 Artin-Schreier proved\,: {\it A field $K$ is real if and only if there is an order $\leq$ on $K$ such that $(K,\leq)$ is an ordered field.} In~particular, the characteristic of a real field is $0$.
\vskip1pt
\noindent {\bf Theorem}\, (\so{Euler-Lagrange})\,
{\it Let $(K,\leq)$ be an ordered field satisfying the properties\,:\,  {\rm (i)}  Every polynomial $f\in K[X]$ of odd degree has a zero in $K$.\, {\rm (ii)\,}  Every positive element in $K$ is a square in $K$. Then the field $\overline{K}=K({\rm i})$ obtained from $K$ by adjoining  a square root ${\rm i}$ of $-1$ is algebraically closed. In~particular, $K$ itself is real-closed.} For a proof see\,\cite[Ch.\,11, \S11.1]{Jacobson1989}.
({\bf Remark\,:} Since the field $\R$ of real numbers is ordered and satisfies the properties (i) and (ii), the  Eulae-Lagrange  theorem proves the \so{Fundamental Theorem of Algebra}\,:  {\it The field $\C=\R({\rm i})$ of complex numbers is algebraically closed .} The Euler-Lagrange Theorem  has a remarkable complement\,:\,---\,{\bf Theorem}\, (\so{Artin}\,-\,\so{Schreier})\,
{\it Let $L$ be an algebraically closed field. If $K\subseteq L$ be a subfield of $L$ such that $\,L\,\vert\,K$ is finite and $K\neq L$, then $L=K({\rm i})$ with ${\rm i}^{2}+1=0$ and $K$ is  a real-closed field.}  For a proof see\,\cite[Ch.\,11, \S11.7]{Jacobson1989}.)}. Then $\Aut K=\{\id_{K}\}$ and the field $\C_{K}:=K[{\rm i}\,]$, where ${\rm i}^{2}\!=-1$, of (complex) numbers over $K$, is the algebraic closure of $K$ with the Galois group $\Gal(\C_{K}\,\vert\,K)= \{\id_{\C_{K}}, \kappa\}$, where $\kappa :\C_{K}\to \C_{K}$, is the (complex)-conjugation defined by by ${\rm i}\mapsto -{\rm i}\,$, see\,\ref{notass:2.1} and Examples\,\ref{exs:2.2}.   \\[0.75mm]
Further, we denote by $\K$ either the field $K$,  or\,   the field $\C_{K}$. The involution in the case $\K=K$ is   $\id_{K}$ and in the case $\K=\C_{K}$ is the (complex)-conjugation  $\kappa:\C_{K}\to \C_{K}$ which we will simply denote by the standard bar-notation, i.\,e. $a\mapsto \overline{a}$, $a\in \C_{K}$.\\[0.75mm]
With these notation and assumptions, we note that  the term ``Hermitian ''  means ``real-symmetric'' in the case $\K=K$ and  ``complex-Hermitian'' if $\K= \C_{K}$.
\end{mypar}

\begin{proposition}\label{prop:3.2}
With the notation as in {\rm \ref{mypar:3.1}},
let $\Phi:V\times V\longrightarrow \K$ be a Hermitian  form on the finite dimensional $\K$-vector space $V$. Then there exists an orthogonal basis $\mbcalx=\{x_{1}, \ldots , x_{n}\}$,  $n:=\Dim_{K} V$ of $V$ with respect to $\Phi$ such that the Gram's matrix $\mscrG_{\Phi}(\mbcalx)$ is a diagonal matrix\,: \\[0.75mm]
\hspace*{30mm} $\,\mscrE_{n}^{p,q}:={\rm Diag}\,(\underbrace{1,\ldots ,1}_{p\,\hbox{\scriptsize -times}},
 \underbrace{-1,\ldots ,-1}_{q\,\hbox{\scriptsize -times}}, \underbrace{0,\ldots ,0}_{n-p-q\,\hbox{\scriptsize -times}})$.
 \end{proposition}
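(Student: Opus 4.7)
The idea is to combine the Decomposition Theorem with the defining feature of a real closed field: every element of $K$ is either $0$ or of the form $\pm d^{2}$ with $d \in K^{\times}$.

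First I would apply the Decomposition Theorem \ref{decthm:2.13} in both cases of \ref{mypar:3.1}. For $\K=K$ the involution is $\id_{K}$ and $\Phi$ is symmetric, so case (b) applies (note $\mathrm{Char}\,K=0$ since $K$ is real closed). For $\K=\C_{K}$ the involution is the nontrivial Galois conjugation and $\Phi$ is Hermitian, so case (a) applies. In either situation the theorem supplies an orthogonal basis $\mbcaly=\{y_{1},\ldots,y_{n}\}$ of $V$ such that $\mscrG_{\Phi}(\mbcaly)$ is diagonal, say with entries $c_{i}:=\Phi(y_{i},y_{i})$. By \ref{mypar:2.10}.5b (and the symmetric-case remark in \ref{notass:2.1}) each $c_{i}$ lies in the fixed field $K'$, which in both cases equals $K$.

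Next, I would normalise the nonzero $c_{i}$. Since $K$ is real closed, for each index $i$ with $c_{i}\neq 0$ there exist $\varepsilon_{i}\in\{+1,-1\}$ and $d_{i}\in K^{\times}$ with $c_{i}=\varepsilon_{i}\,d_{i}^{2}$. Set $x_{i}:=d_{i}^{-1}y_{i}$ for these $i$, and $x_{i}:=y_{i}$ whenever $c_{i}=0$. Because $d_{i}^{-1}\in K\subseteq \K$ and $\overline{d_{i}^{-1}}=d_{i}^{-1}$ (the conjugation fixes $K'=K$), a short sesquilinearity computation gives
\[
\Phi(x_{i},x_{i}) \;=\; d_{i}^{-1}\,\overline{d_{i}^{-1}}\;\Phi(y_{i},y_{i}) \;=\; d_{i}^{-2}\,c_{i} \;=\; \varepsilon_{i},
\]
while orthogonality is preserved because each $x_{i}$ is a scalar multiple of $y_{i}$ and $y_{i}\perp y_{j}$ for $i\neq j$. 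Hence $\{x_{1},\ldots,x_{n}\}$ is again an orthogonal basis, and its Gram matrix is diagonal with entries in $\{+1,-1,0\}$.

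Finally, I would permute this basis so that the indices with $\Phi(x_{i},x_{i})=+1$ come first (call their number $p$), then those with value $-1$ (call their number $q$), and finally those with value $0$ (the remaining $n-p-q$). The Gram matrix of $\Phi$ with respect to this reordered orthogonal basis is exactly $\mscrE_{n}^{p,q}$, as required. I do not anticipate any genuine obstacle: the argument is essentially the classical reduction to Sylvester normal form. The only point requiring a moment's thought is the Hermitian case $\K=\C_{K}$, where one must verify that the rescaling factor $d_{i}$ can be chosen in $K'=K$ so that $d_{i}\overline{d_{i}}=d_{i}^{2}$ and the sign $\varepsilon_{i}$ is unaffected by conjugation; this is automatic because $c_{i}$ already lies in $K$ and $K$ itself is real closed.
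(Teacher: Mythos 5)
Your argument is correct and is precisely the intended derivation: the paper itself states Proposition~\ref{prop:3.2} without proof, but it sets up the Decomposition Theorem~\ref{decthm:2.13} exactly so that one first diagonalises (case (a) for $\K=\C_{K}$, case (b) for $\K=K$, the diagonal entries landing in $K'=K$ by \ref{mypar:2.10}.5b), then rescales each nonzero entry $c_{i}=\varepsilon_{i}d_{i}^{2}$ by $d_{i}^{-1}\in K^{\times}$ using that a real closed field is ordered with positive elements equal to squares, and finally permutes the basis. No gaps; your observation that choosing $d_{i}$ in $K$ (rather than in $\C_{K}$) makes $d_{i}\overline{d_{i}}=d_{i}^{2}$ automatic is exactly the right way to handle both cases uniformly.
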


\noindent Below in \ref{thm:3.4} we note that that the numbers $p$ and $q$  are uniquely determined by $\Phi$. Obviously, $p+q=\rank\,\Phi$. In~particular, $\Phi$ is non-degenerate if and only if $p+q=n$. For characterization of invariants $p$ and $q$ the following concepts are useful\,:
\smallskip

\begin{definition}\label{def:3.3}
Let $\Phi:V\times V\longrightarrow \K$ be a Hermitian  form on the finite dimensional $\K$-vector space $V$. Then $\,\Phi$ is called\,:
\vskip2pt

(1)  \so{positive definite} if $\Phi (x,x)> 0$ for all  $x\in V$, $x\ne 0$.
\vskip1pt

(2)   \so{negative definite} if $\Phi (x,x)< 0$ for all  $x\in V$, $x\ne 0$.
\vskip1pt

(3)   \so{positive semi}{\,-\,}\so{definite} if $\Phi (x,x)\geq  0$ for all  $x\in V$.
\vskip1pt

(4)   \so{negative semi}{\,-\,}\so{definite} if $\Phi (x,x)\leq  0$ for all  $x\in V$.
\vskip1pt

(5)   \so{indefinite} if there are vectors $x,y\in V$ with $\Phi (x,x)> 0$ and $\Phi (y,y)< 0$.

\end{definition}

\begin{mypar}\label{thm:3.4}{\sc Sylvester's Law of Inertia}\, {\it Let $\Phi$ be a Hermitian  form on the finite dimensional $\K$-vector space $V$  and let $\mbcalx=\{x_1,\ldots ,x_n\}$, $n:=\Dim_{\K} V$ be an   orthogonal  basis of $V$ with respect to $\Phi$  such that  the Gram's matrix $\mscrG_{\Phi}(\mbcalx)$ of $\Phi$ is the diagonal matrix\,:\\[0.75mm]
\hspace*{30mm}
$\,\mscrE_{n}^{p,q}:={\rm Diag}\,(\underbrace{1,\ldots ,1}_{p\,\hbox{\scriptsize -times}},
 \underbrace{-1,\ldots ,-1}_{q\,\hbox{\scriptsize -times}}, \underbrace{0,\ldots ,0}_{n-p-q\,\hbox{\scriptsize -times}})$.\\[1mm]
Then  $p$ is the maximum of the dimensions of  subspaces  of $V$ on which $\Phi$ is positive definite, and $q$ the maximum of the dimensions of subspaces of $V$ on which $\Phi$ is negative definite. ---\,In~particular,  $p$ and $q$ do not depend on the special choice of the orthogonal basis $x_1,\ldots ,x_n$ of $\,V$.}
\end{mypar}

\begin{definition}\label{def:3.5}
The pair $(p,q)$ as in the Sylvester's Law of Inertia\,\ref{thm:3.4} is called the \so{type} of the form $\Phi$. The natural number   $p$ is called the  (\so{inertia}\,-)\,\so{index}, the integer $p-q$ is called the  \so{signature} and the natural number $q$ is called the  \so{Morse}\,-\,\so{index} of $\Phi$. We denote the rank, signature and type of a Hermitian  form $\Phi$ by $\rank\,\Phi$, $\sign\,\Phi$ and $\type\,\Phi$, resp.
\smallskip

The \so{type} of a Hermitian   matrix $\,\mscrC\in {\rm M}_{n}(\K)$ is by definition the type a form with    $\mscrC$ as the  Gram's matrix with respect to an (arbitrary) $\K$-basis of $\K^{n}$. The  matrix analog of the Sylvester's Law of Inertia\,\ref{thm:3.4} is the following\,:
\end{definition}

\begin{corollary}\label{cor:3.6}
Let  $\Phi $ be a Hermitian   form on the  $n$-dimensional $\K$-vector space $V$ with $\K$-basis  $\mbcalx=\{x_1,\ldots ,x_n\}$.
Then $\Phi$ is of type $(p,q)$ if and only if  the Gram's matrix $\mscrG_{\Phi}(\mbcalx)$ is congruent to the matrix $\mscrE_n^{p,q}$, {\rm i.\,e.} there exists an invertible matrix  $\mscrA\in {\rm GL}_n\,(\K )$ such that $\mscrG_{\Phi}({\mbcalx}) =\,^{\rm t}\mscrA\mscrE_n^{p,q}\,\overline{\mscrA}\,$. Two Hermitian   matrices $\mscrC$ and $\mscrC'\in {\rm M}_{n}(\K)$ have the same type if and only if they are congruent. In~particular, a Hermitian  matrix $\mscrC\in {\rm M}_{n}(\K)$ have type $(p,q)$ if and only if $\mscrC$  is congruent to the matrix $\mscrE_{n}^{p,q}$.
\end{corollary}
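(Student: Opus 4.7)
The plan is to reduce the corollary to the combination of Proposition~\ref{prop:3.2} (existence of an orthogonal basis producing a Gram matrix of the shape $\mscrE_n^{p,q}$) and Sylvester's Law of Inertia~\ref{thm:3.4} (uniqueness of the pair $(p,q)$), linked by the transformation formula for Gram matrices under change of basis recorded in \ref{mypar:2.10}.4. The statement is essentially a translation of these two results into the language of matrix congruence.

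For the first equivalence, I would first assume $\type\,\Phi = (p,q)$. Proposition~\ref{prop:3.2} yields some orthogonal basis $\mbcalx'$ of $V$ with $\mscrG_{\Phi}(\mbcalx') = \mscrE_n^{p',q'}$, and Sylvester's Law of Inertia~\ref{thm:3.4} forces $(p',q') = (p,q)$. Letting $\mscrA \in \GL_n(\K)$ be the transition matrix relating $\mbcalx$ and $\mbcalx'$, the formula in \ref{mypar:2.10}.4 gives $\mscrG_{\Phi}(\mbcalx) = {}^{\rm t}\mscrA\,\mscrE_n^{p,q}\,\overline{\mscrA}$. Conversely, if such a congruence holds for some $\mscrA \in \GL_n(\K)$, the inverse change of basis produces a new basis $\mbcalx''$ of $V$ with $\mscrG_{\Phi}(\mbcalx'') = \mscrE_n^{p,q}$; this basis is orthogonal with exactly $p$ positive ones, $q$ negative ones and $n-p-q$ zeros on the diagonal, so Sylvester's Law of Inertia~\ref{thm:3.4} delivers $\type\,\Phi = (p,q)$.

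For the second equivalence, I would recall from \ref{mypar:2.14} that congruence is an equivalence relation on $\M_n(\K)$. If two Hermitian matrices $\mscrC$ and $\mscrC'$ share type $(p,q)$, then by the matrix version of the first equivalence applied to the Hermitian forms that they define on $\K^n$ in the standard basis, both are congruent to $\mscrE_n^{p,q}$ and hence, by transitivity, to each other. Conversely, if $\mscrC = {}^{\rm t}\mscrA\,\mscrC'\,\overline{\mscrA}$ for some $\mscrA \in \GL_n(\K)$, then the Hermitian form $\Phi$ on $\K^n$ whose Gram matrix in the standard basis is $\mscrC$ also has $\mscrC'$ as its Gram matrix relative to a different basis (the one obtained from the standard basis via the transition $\mscrA^{-1}$); since the type is an invariant of the form itself and not of any particular Gram matrix, $\mscrC$ and $\mscrC'$ have the same type. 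The ``In particular'' clause is then immediate, because $\mscrE_n^{p,q}$ manifestly has type $(p,q)$: the standard basis of $\K^n$ serves as the orthogonal basis required by Sylvester's Law of Inertia.

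No serious obstacle is anticipated; the one point requiring care is the bookkeeping of bases and transition matrices, namely choosing a consistent convention (as in \ref{mypar:2.10}.4) for which direction $\mscrA$ transforms in the formula $\mscrG_{\Phi}(\mbcalx) = {}^{\rm t}\mscrA\,\mscrG_{\Phi}(\mbcalx')\,\overline{\mscrA}$, so that the congruences are stated in the correct orientation and the invertible matrix appearing in the congruence is genuinely the transition matrix of an honest change of basis.
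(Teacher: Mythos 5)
Your proposal is correct and follows exactly the route the paper intends: the corollary is presented there as the matrix reformulation of Proposition~\ref{prop:3.2} and Sylvester's Law of Inertia~\ref{thm:3.4}, glued together by the change-of-basis formula \ref{mypar:2.10}.4, which is precisely what you do. The one point you rightly flag --- keeping the transition-matrix convention consistent so that the conjugate $\overline{\mscrA}$ appears on the correct side --- is the only bookkeeping issue, and you handle it correctly.
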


If $\K\!=\!K$ (real closed), then one  can choose\,\footnote{\label{foot:3}\,Use the following observation\,: Let $V$ be an oriented vector space over a real-closed field $K$ of dimension $n\in\N^*$ and $\Phi$ be a Hermitian   form of type $(p,q)$ on $V$. Then there exists an orientation of  $V$ represented by a basis $x_1,\ldots ,x_n$ of $V\,$ such that the Gram's matrix of $\Phi$ is equal to the matrix ${\mscrE}_n^{p,q}$.}\,
 $\mscrA\!\in\!{\rm GL}_{n}^{+}(K )$, i.\,e. $\Det\mscrA>0$.  In the situation of  Corollary\,\ref{cor:3.6}, if $\Phi$ is non-degenerate, i.\,e. if  $p+q\!=\!n$, then ${\rm Det}\,{\mscrG}_{\Phi}({\mbcalx}) =(-1)^{q}|\,{\rm Det}\, {\mscrA}|^2$, i.\,e.
${\rm Sign}\, ({\rm Det}\,{\mscrG}_{\Phi}({\mbcalx}))\!=\!(-1)^{q}\!$. {\it Therefore, the sign of the Gram's determinant ${\rm Det}\,{\mscrG}_{\Phi}({\mbcalx})\,$ determines the parity of  $q$.} From this the following useful criterion for the determination of the type follows\,:

\begin{mypar}\label{thm:3.7} {\sc Hurwitz's Criterion}\,
{\it Let  $\Phi $ be a Hermitian  form on the  $n$-dimensional
$\K$-vector space $V$ with basis $\mbcalx=\{x_1,\ldots ,x_n\}$.
Suppose that the principal minors
\vspace*{-1mm}
\begin{align*}
D_{0}:=1\ \quad  {and} \quad \ D_i:=\left|\,\begin{matrix}
\Phi( x_1,x_1)&\!\!\cdots &\!\!\Phi( x_1,x_i) \cr
\vdots&\!\!\ddots&\!\!\vdots \cr
\Phi(x_i,x_1) &\!\!\cdots &\!\! \Phi( x_i,x_i) \cr
\end{matrix}\,\right|\,,\qquad i=1,\ldots ,n,
\vspace*{-2mm}
\end{align*}
of the Gram's matrix $\mscrG_{\Phi}({\mbcalx})=(\Phi (x_i,x_j))\in {\rm M}_n\,(K)$ of $\Phi$
with respect to the  basis $\mbcalx$
are all $\neq 0$. Then the type of  $\Phi$ is $(n-q,q)$, where $q$ is the number of
sign changes\,\footnote{\label{foot:4}\,Recall that we say that in a sequence $a_0,\ldots ,a_n$ of non-zero real numbers \so{changes the sign at the $i$-th place} if  $\,0\le i< n\,$ and $\,a_i \,a_{i+1}< 0$.\,---\,For an arbitrary sequence of real numbers $b_0,\ldots ,b_m$ by a \so{change of signs} means a change of signs in the sequence obtained by removing the zeros from the original sequence.}
in the sequence  $1=D_0,D_1,\ldots ,D_n={\rm Det}\,{\mscrG_{\Phi}({\mbcalx})}$.}
\end{mypar}
\smallskip

\begin{corollary}\label{cor:3.8}\,
Let  $\Phi $ be a Hermitian  form on the  $n$-dimensional
$\K$-vector space $V$ with basis $\mbcalx=\{x_1,\ldots ,x_n\}$  and let
\vspace*{-3mm}
\begin{align*}
D_{0}:=1\ \quad  {and} \quad \  D_i:=\left|\,\begin{matrix}
\Phi( x_1,x_1)&\!\!\cdots &\!\!\Phi( x_1,x_i) \cr
\vdots&\!\!\ddots&\!\!\vdots \cr
\Phi(x_i,x_1) &\!\!\cdots &\!\! \Phi( x_i,x_i) \cr
\end{matrix}\,\right|\,,\qquad i=1,\ldots ,n,
\vspace*{-2mm}
\end{align*}
be the principal minors of the Gram's matrix $\mscrG_{\Phi}(\mbcalx)$.  Then\,:
 \vskip3pt

 $(1)$\, $\Phi$ is positive  definite if and only if   $\,D_{i}>0$\,    for all $\,i=1,\ldots, n$.
\vskip2pt

($2)$\,   $\Phi$ is negative  definite if and only  if $(-1)^{i}\,D_{i}>0\,$ for all $i=1,\ldots, n$,\, {\rm i.\,e.}  at every position in the sequence $D_0, D_1,\ldots ,D_n$ there is a sign change.
\end{corollary}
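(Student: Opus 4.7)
The plan is to derive both parts directly from Hurwitz's Criterion~\ref{thm:3.7}, combined with the elementary observation that positive (resp.\ negative) definiteness of $\Phi$ restricts to subspaces and forces the relevant principal minors to be nonzero.

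First I would introduce the flag $V_i:=Kx_1+\cdots+Kx_i\subseteq V$ for $i=1,\ldots,n$ and observe that the restricted form $\Phi_i:=\Phi\vert_{V_i\times V_i}$ is again Hermitian, with Gram's matrix (with respect to $x_1,\ldots,x_i$) equal to the $i\times i$ leading principal submatrix of $\mscrG_{\Phi}(\mbcalx)$. In particular $D_i=\Det\mscrG_{\Phi_i}(x_1,\ldots,x_i)$. By the sign-of-determinant remark recorded immediately before Hurwitz's Criterion, whenever $\Phi_i$ is non-degenerate of type $(p_i,q_i)$ (so $p_i+q_i=i$), one has $\mathrm{sign}(D_i)=(-1)^{q_i}$.

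For part~(1), the direction $(\Leftarrow)$ is immediate from~\ref{thm:3.7}: the hypothesis $D_i>0$ for all $i$ guarantees all principal minors are nonzero, and the sequence $1=D_0,D_1,\ldots,D_n$ has \emph{no} sign changes, so $q=0$ and $\mathrm{type}\,\Phi=(n,0)$, i.e.\ $\Phi$ is positive definite. For $(\Rightarrow)$, if $\Phi$ is positive definite then each $\Phi_i$ inherits positive definiteness from $\Phi$, hence is non-degenerate of type $(i,0)$; consequently $D_i\neq 0$ and $\mathrm{sign}(D_i)=(-1)^0=1$, i.e.\ $D_i>0$. Part~(2) is handled in exactly the same way: $(\Leftarrow)$ the alternating signs of $1,D_1,\ldots,D_n$ yield $n$ sign changes, so by~\ref{thm:3.7} the type is $(0,n)$ and $\Phi$ is negative definite; $(\Rightarrow)$ if $\Phi$ is negative definite, then each $\Phi_i$ is negative definite, hence of type $(0,i)$, so $D_i\neq 0$ and $\mathrm{sign}(D_i)=(-1)^i$, which is the claim.

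There is no real obstacle here beyond bookkeeping: the only mild point to verify is that Hurwitz's hypothesis ``$D_i\neq 0$ for all $i$'' is automatically met in each implication—supplied by the assumption in the ``sufficiency'' directions and by inherited non-degeneracy of $\Phi_i$ in the ``necessity'' directions. Everything else is a translation between ``type $(n,0)$'' (resp.\ ``type $(0,n)$'') and the corresponding count of sign changes ($0$ resp.\ $n$) in the sequence $D_0,D_1,\ldots,D_n$.
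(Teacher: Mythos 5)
Your proof is correct and follows exactly the route the paper intends: the corollary is stated as a direct consequence of Hurwitz's Criterion~\ref{thm:3.7}, and you correctly supply the only nontrivial ingredient for the necessity directions, namely that (positive or negative) definiteness passes to the flag of subspaces spanned by $x_1,\ldots,x_i$, forcing each $D_i\neq0$ with sign $(-1)^{q_i}$ by the determinant remark preceding~\ref{thm:3.7}. (Only a cosmetic point: the subspaces should be written $\K x_1+\cdots+\K x_i$, since the ground field is $\K$, not $K$.)
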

\smallskip

\begin{example}\label{ex:3.9}\,
Let $\{v_1,v_2\}$ be a basis of the  2-dimensional  $K $-vector space $V$. For the symmetric bilinear form  $\Phi \!=\!\langle  -,- \rangle $ on  $V$. Let  $D_1\!=\!\langle v_1,v_1\rangle $ and
$D_2\!=\!
{\rm Det}\begin{pmatrix}
\langle v_1,v_1\rangle & \langle v_1,v_2\rangle \cr
\langle v_2,v_1\rangle & \langle v_2,v_2\rangle \cr
\end{pmatrix}\!=\!
\langle v_1,v_1\rangle\, \langle v_2,v_2\rangle -|\langle v_1,v_2\rangle |^2$.
Then the following table shows the dependence of the ${\rm sign }\,D_{1}$,  ${\rm sign }\, D_{2}$ and the  type of $\Phi$\,:\\[1mm]
\hspace*{10mm} {\footnotesize $\setbox\strutbox=\hbox{\vrule height 10pt depth 4.5pt width 0pt}
\offinterlineskip\tabskip=0pt
\vbox{\halign{\strut$\ #$ &#\vrule &\ #\ &#\vrule &\ #\ &#\vrule &\ #\ &#\vrule
&\ #\ &#\vrule
&\ #\ &#\vrule
&\ #\ &#\vrule
&\ #\ &#\vrule
&\ #\ & \ #\vrule
& \ # & \ # \vrule
& \ #  & \ # \vrule
&\ #\ \cr
D_1&&+&&+&&--&&--&&+&&--&&0&&0  && 0 && 0 \cr
\noalign{\hrule}
D_2&&+&&--&&+&&--&&0&&0&&--&&0 && 0 && 0 \cr
\noalign{\hrule}
\langle v_{1}, v_{2}\rangle &&   &&   &&   &&    &&   &&   &&   &&  >\,0  && <\,0  && 0 \cr
\noalign{\hrule}
{\rm Type}&& \ (2,0) \ && \ (1,1) \ && \ (0,2) \ && \ (1,1) \ && \ (1,0) \ && \ (0,1) \ && \ (1,1) \ && \ (1,0)\\ && \ (0,1) \ && \ (0,0) \ \cr}}\ \,.$ } \\[1mm]
Note that the case   $D_1=0$, $D_2>0$ is not possible.
\end{example}
\smallskip

\begin{example}\label{ex:3.10}
Let $z\in\C_{K}\smallsetminus K$, $\pi:=(X-z)(X-\overline{z}) \in K[X]$, $A:=K[X]/\langle \pi\rangle]:= K[x]$, where $x$ is the image of $X$ modulo $\langle \pi\rangle$. Further, let $H\in K[X]$, $H\not\in \langle \pi \rangle$,  $h=h(x)\in A$ be the image of $H$ in $A$ and let $\Phi_{h}:A\times A \longrightarrow K$ be the symmetric bilinear form defined by $\Phi_{h}(f,g) =\tr_{K}^{A}(hfg)$, $\,f$, $g\in A$. Then the Gram's matrix of $\Phi_{h}$ with respect to the basis $\{1,x\}$
$$\mscrG_{\Phi_{h}}(1, x) =
\begin{pmatrix}
h(z) +h(\overline{z}) &  h(z)\cdot z  +h(\overline{z})\cdot \overline{z} \cr
h(z)\cdot z  +h(\overline{z})\cdot \overline{z}  & h(z)\cdot z^{2}  +h(\overline{z})\cdot \overline{z}^{2} \cr
\end{pmatrix} \in {\rm M}_{2}(K)
$$
is a symmetric matrix with $\,D_{1}= h(z)+h(\overline{z})=2\,{\rm Re}\,h(z)\,$ and $\,D_{2}=\Det\,\mscrG_{\Phi_{h}}(1, x) = h(z)\, h(\overline{z}) \,(z-\overline{z})^{2}= -4\,|h(z)|^{2} ({\rm Re}\, z\,)^{2}<0$. Therefore, by the table in Example\,\ref{ex:3.10},  the type of $\Phi_{h}$ is $(1,1)$
\end{example}
\smallskip

The type of a Hermitian form on a finite dimensional vector space $V$ over $\C_{K}$ can also be determined by using the eigenvalues of the Gram's matrix, see Theorem\,\ref{thm:3.12} below. Usual proofs given in the standard text books of this fact uses the \textit{Principal Axis Theorem for self-adjoint operators} (also known ans the {\it Spectral Theorem}).  We give here a direct proof using the following interesting Lemma\,\ref{lem:3.11}\,:
\smallskip

\begin{lemma}\label{lem:3.11}$\!\!\!\!$ Let $K$ be a real closed field with notation as in {\rm \ref{mypar:3.1}},    $V$  an $n$-di\-mensional $\C_{K}$-vector space  $V$ with a positive  definite complex-Hermitian  form $\Phi$ on $V$ and let $f:V\rightarrow V$. Then there exists an orthonormal basis $\mbcalx\!=\!(x_1,\ldots ,x_n)$  of $V$
such that the matrix $\mscrM^{\mbcalx}_{\mbcalx}(f)$ of $f$ with respect to $\mbcalx$ is an upper triangular matrix.
\end{lemma}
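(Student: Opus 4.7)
The plan is to run the classical Schur triangulation argument by induction on $n=\Dim_{\C_{K}} V$, building the orthonormal basis from the top down using the adjoint of $f$. The base case $n=1$ is immediate: pick any non-zero $v\in V$, note that $\Phi(v,v)$ is a positive element of $K$ by positive definiteness, extract $\sqrt{\Phi(v,v)}\in K$ (available because $K$ is real closed), and rescale to obtain $x_{1}:=v/\sqrt{\Phi(v,v)}$ with $\Phi(x_{1},x_{1})=1$.

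For the inductive step I first want an adjoint $f^{*}\colon V\to V$ characterised by $\Phi(f(x),y)=\Phi(x,f^{*}(y))$ for all $x,y\in V$. Since $\Phi$ is positive definite it is in particular non-degenerate, so in the finite-dimensional setting (\ref{mypar:2.6}) the canonical map $\Phi_{2}$ is a semilinear bijection $V\to V^{*}$. For each fixed $y$, the $\C_{K}$-linear form $x\mapsto \Phi(f(x),y)$ therefore equals $\Phi(-,f^{*}(y))$ for a unique $f^{*}(y)\in V$, and a short verification using the semilinearity of $\Phi$ in the second slot shows that the assignment $y\mapsto f^{*}(y)$ is genuinely $\C_{K}$-linear.

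Since $\C_{K}=K[\,{\rm i}\,]$ is algebraically closed, the characteristic polynomial of $f^{*}$ splits, and so $f^{*}$ has an eigenvector $v$ with $f^{*}(v)=\mu v$ for some $\mu\in \C_{K}$. Normalise to $x_{n}:=v/\sqrt{\Phi(v,v)}$, so that $\Phi(x_{n},x_{n})=1$. The key point is that the hyperplane $W:=\{x_{n}\}^{\bot}$ is $f$-invariant: for every $w\in W$,
\[
\Phi(f(w),x_{n}) \;=\; \Phi(w,f^{*}(x_{n})) \;=\; \overline{\mu}\,\Phi(w,x_{n}) \;=\; 0.
\]
Non-degeneracy gives the orthogonal decomposition $V=\C_{K}x_{n}\operp W$ with $\Dim_{\C_{K}}W=n-1$, and the restriction $\Phi\vert W\times W$ remains a positive definite complex-Hermitian form. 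Applying the inductive hypothesis to $f\vert W$ yields an orthonormal basis $x_{1},\ldots,x_{n-1}$ of $W$ in which $f\vert W$ has upper triangular matrix; then $\mbcalx=(x_{1},\ldots,x_{n})$ is an orthonormal basis of $V$, the subspace spanned by $x_{1},\ldots,x_{i}$ is $f$-invariant for every $i\leq n$ (from the triangular form on $W$ for $i<n$, and trivially for $i=n$), and hence $\mscrM^{\mbcalx}_{\mbcalx}(f)$ is upper triangular as required.

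The one spot that deserves care is the construction of $f^{*}$: one must keep the semilinearity conventions of \ref{def:2.3} straight and confirm that $f^{*}$ comes out $\C_{K}$-linear (rather than semilinear), so that the algebraic closedness of $\C_{K}$ really produces an eigenvector $v$ with scalar eigenvalue $\mu\in\C_{K}$. Everything else is a routine use of positive definiteness (to guarantee the orthogonal complement and allow the normalisation) together with the existence of square roots of positive elements in the real closed field $K$.
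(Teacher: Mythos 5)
Your argument is correct. The paper itself states Lemma~\ref{lem:3.11} without proof (it is among the results ``recalled without proofs''), so there is no in-text argument to compare against; what you give is the standard Schur triangulation by induction, run through the adjoint $f^{*}$ so that the hyperplane $\{x_{n}\}^{\bot}$ is genuinely $f$-invariant and no orthogonal projection is needed. The two points that actually depend on the real closed setting --- that $\C_{K}=K[\,{\rm i}\,]$ is algebraically closed (so $f^{*}$ has an eigenvalue in $\C_{K}$) and that positive elements of $K$ are squares (so the normalisations $v/\sqrt{\Phi(v,v)}$ make sense) --- are both identified and handled, and your check that $f^{*}$ is $\C_{K}$-linear rather than semilinear is exactly the verification the paper's sign conventions in \ref{def:2.3} and \ref{mypar:2.6} require.
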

\smallskip

\begin{theorem}\label{thm:3.12}
Let $K$ be a real closed field with notation as in {\rm \ref{mypar:3.1}} and let  ${\mscrC}\in \M_{n}(\C_{K})$ be a Hermitian  matrix.  Then all the eigenvalues of ${\mscrC}$ are in $K$ and  ${\mscrC}$ is of type $(p,q)$, where  $p$ is the number of positive eigenvalues and $q$ is the number of negative eigenvalues of  ${\mscrC}$ counted with their multiplicities in the characteristic polynomial $\chi_{\mscrC}$ of ${\mscrC}$.
\end{theorem}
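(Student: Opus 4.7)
The plan is to apply Lemma~\ref{lem:3.11} to the $\C_{K}$-linear endomorphism $f\colon \C_{K}^{n}\to \C_{K}^{n}$, $x\mapsto \mscrC x$, where $\C_{K}^{n}$ is equipped with the standard positive-definite complex-Hermitian form $\langle-,-\rangle$ of Example~\ref{exs:2.5}(1). Lemma~\ref{lem:3.11} then produces an orthonormal basis $\mbcalx=(x_{1},\ldots,x_{n})$ of $\C_{K}^{n}$ such that the matrix $\mathscr{T}:=\mscrM^{\mbcalx}_{\mbcalx}(f)$ is upper triangular. Letting $\mscrA\in\GL_{n}(\C_{K})$ be the matrix whose $j$-th column is $x_{j}$ in standard coordinates, the orthonormality of $\mbcalx$ is exactly the unitarity condition ${}^{\rm t}\overline{\mscrA}\,\mscrA=\mscrE_{n}$, that is, $\mscrA^{-1}={}^{\rm t}\overline{\mscrA}$.

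The decisive observation is that this unitary matrix $\mscrA$ realises $\mathscr{T}$ as \emph{both} a similar and a congruent companion of $\mscrC$. On the one hand, the change-of-basis formula for $f$ yields $\mathscr{T}=\mscrA^{-1}\mscrC\mscrA={}^{\rm t}\overline{\mscrA}\,\mscrC\,\mscrA$, so putting $\mathscr{B}:=\overline{\mscrA}$ we get $\mathscr{T}={}^{\rm t}\mathscr{B}\,\mscrC\,\overline{\mathscr{B}}$, which exhibits $\mscrC$ as congruent to $\mathscr{T}$ in the sense of Corollary~\ref{cor:3.6}. On the other hand, the Hermitian identity ${}^{\rm t}\overline{\mscrC}=\mscrC$ transfers through the same conjugation to give ${}^{\rm t}\overline{\mathscr{T}}={}^{\rm t}\overline{\mscrA}\,\mscrC\,\mscrA=\mathscr{T}$, so $\mathscr{T}$ is itself Hermitian. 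An upper triangular Hermitian matrix is automatically diagonal, and its diagonal entries are fixed by conjugation; hence $\mathscr{T}=\Diag(\lambda_{1},\ldots,\lambda_{n})$ with each $\lambda_{i}\in K$. Since $\mathscr{T}$ is similar to $\mscrC$, these $\lambda_{i}$ are precisely the eigenvalues of $\mscrC$, counted with their multiplicities in $\chi_{\mscrC}$. This proves the first assertion.

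For the statement about the type, let $p$ (respectively $q$) denote the number of indices $i$ with $\lambda_{i}>0$ (respectively $\lambda_{i}<0$). Since $K$ is real closed, for each $i$ with $\lambda_{i}\neq 0$ there exists $a_{i}\in K^{\times}$ with $a_{i}^{2}=|\lambda_{i}|^{-1}$, and one sets $a_{i}:=1$ when $\lambda_{i}=0$. The real diagonal matrix $\mathscr{D}:=\Diag(a_{1},\ldots,a_{n})\in\GL_{n}(K)$ then satisfies ${}^{\rm t}\mathscr{D}\,\mathscr{T}\,\overline{\mathscr{D}}=\Diag(s_{1},\ldots,s_{n})$ with each $s_{i}\in\{+1,-1,0\}$, and a further congruence by a suitable permutation matrix rearranges these entries into the canonical form $\mscrE_{n}^{p,q}$. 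Composing these congruences, $\mscrC$ is congruent to $\mscrE_{n}^{p,q}$, and by Corollary~\ref{cor:3.6} therefore has type $(p,q)$.

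The only nontrivial input is Lemma~\ref{lem:3.11}; producing a basis that is \emph{simultaneously} orthonormal and triangularising for $f$ is the heart of the matter, for it is the unitarity of the transition matrix that forces the similarity transformation $\mscrA^{-1}\mscrC\mscrA$ to coincide with a congruence transformation, and this is what compels the triangular form to be real-diagonal. Once such a basis is at hand the remaining algebraic manipulation is essentially automatic.
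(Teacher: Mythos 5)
Your proof is correct and follows exactly the route the paper announces for Theorem~\ref{thm:3.12}: apply Lemma~\ref{lem:3.11} to $x\mapsto\mscrC x$ with the standard positive definite form, observe that unitarity of the transition matrix makes the similarity transform a congruence, so the upper triangular matrix is Hermitian, hence diagonal with entries in $K$, and then rescale and permute to reach $\mscrE_{n}^{p,q}$. Your argument also yields the diagonalizability statement recorded in Remark~\ref{rem:3.13}, confirming it matches the paper's intended proof.
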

\smallskip

\begin{remark}\label{rem:3.13}
The proof of the above Theorem\,\ref{thm:3.12} also  shows that every Hermitian matrix in $\M_{n}(\C_{K})$ is diagonalizable (even with respect to an orthonormal basis of $\C_{K}^{n}$).
\end{remark}

\begin{corollary}\label{cor:3.14}
Let $K$ be a real closed field with notation as in {\rm \ref{mypar:3.1}} and let ${\mscrC}\in \M_{n}(\C_{K})$ be a Hermitian  matrix. Then the characteristic polynomial  $\chi_{\mscrC}=c_{0}+c_{1}X+\cdots +c_{n-1}X^{n-1}+X^{n}\in K[X]$  and  ${\mscrC}$ is of type $(p,q)$,  where  $p$ is the number of sign changes in the sequence $c_{0}, c_{1}, \ldots , c_{n-1}, c_{n}:=1$ and $q$ is the
number of sign changes in the sequence $c_{0}, -c_{1}, \ldots , (-1)^{n-1}c_{n-1}, (-1)^{n}c_{n}=(-1)^{n}$. If  $c_{0}= c_{1}=\cdots = c_{r-1}=0$ and $c_{r}\neq 0$, then $p+q=n-r$.
\end{corollary}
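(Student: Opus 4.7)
The plan is to combine Theorem 3.12 with the classical Descartes' rule of signs, applied inside the real closed field $K$. By Theorem 3.12, every eigenvalue of $\mscrC$ lies in $K$, so the characteristic polynomial $\chi_{\mscrC}$ splits into linear factors over $K$, in particular $\chi_{\mscrC}\in K[X]$. If $r$ denotes the algebraic multiplicity of $0$ as an eigenvalue of $\mscrC$, one has $\chi_{\mscrC}(X)=X^r\,g(X)$ with $g\in K[X]$ and $g(0)\neq 0$; comparing coefficients gives $c_0=\cdots =c_{r-1}=0$ and $c_r\neq 0$. Since, again by Theorem 3.12, $p$ and $q$ count the positive and negative eigenvalues of $\mscrC$ with multiplicity, we also get $p+q=n-r$ from the fact that the remaining $n-r$ eigenvalues are precisely the nonzero roots of $g$.

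Next I would invoke the sharp form of Descartes' rule of signs for polynomials that split over $K$: \emph{if $g\in K[X]$ splits into linear factors over $K$ and $g(0)\neq 0$, then the number of positive roots of $g$, counted with multiplicity, is exactly the number of sign changes in the coefficient sequence of $g$}. This holds over any ordered field and is proved by induction on $\deg g$: writing $g(X)=(X-\mu)h(X)$, one verifies that factoring out $(X-\mu)$ increases the sign-change count by exactly $1$ when $\mu>0$ and leaves it unchanged when $\mu<0$. The argument for $\mu<0$ uses essentially that $h$ itself splits over $K$, so that its coefficient sequence has no ``hidden'' cancellations; this is precisely the step that upgrades the generic upper-bound version of Descartes' rule to an equality.

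Applying this rule to $g$ (and noting that the leading zeros $c_0,\ldots ,c_{r-1}$ in the coefficient sequence $c_0,c_1,\ldots ,c_n$ contribute no sign changes) yields that $p$ equals the number of sign changes in the sequence $c_0,c_1,\ldots ,c_n$. For the count of negative eigenvalues, I apply the same rule to the polynomial
\begin{equation*}
\chi_{\mscrC}(-X)=\sum_{i=0}^{n}(-1)^{i}c_{i}X^{i},
\end{equation*}
whose positive roots, with multiplicity, correspond bijectively to the negative roots of $\chi_{\mscrC}$. Since $\chi_{\mscrC}(-X)$ also splits over $K$, the rule gives $q$ equal to the number of sign changes in $c_0,-c_1,c_2,-c_3,\ldots ,(-1)^{n}c_n$, which is exactly the second claim.

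The main obstacle is the sharp (equality) version of Descartes' rule for split polynomials; the standard statement only gives an upper bound differing from the true count by an even number, and the equality relies on the roots all being in $K$. Once that is in place, the rest is just translating the eigenvalue count from Theorem 3.12 into sign changes of $\chi_{\mscrC}$ and of $\chi_{\mscrC}(-X)$.
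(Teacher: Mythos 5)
Your proposal is correct and follows essentially the same route as the paper: by Theorem\,\ref{thm:3.12} all eigenvalues of $\mscrC$ lie in $K$, so $\chi_{\mscrC}$ splits over $K$, and then the exact (equality) form of Descartes' rule of signs for polynomials splitting over $K$ --- which the paper records as Theorem\,\ref{thm:3.15} --- converts the counts of positive and negative eigenvalues into the stated sign-change counts; your extra sketch of a proof of that sharp Descartes rule merely fills in a result the paper cites without proof.
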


\begin{proof}
Note that, since all the eigenvalues of ${\mscrC}$ are real by Theorem\,\ref{thm:3.12}, indeed $\chi_{\mscrA}\in K[X]$. The assertion is immediate from Theorem\,\ref{thm:3.12} and the following classical theorem of Descartes\,:
\end{proof}

\begin{theorem}\label{thm:3.15}\so{\rm (Descartes' Rule of Signs)}\,
Let $K$ be a real closed field with notation as in {\rm \ref{mypar:3.1}} and let
$f=a_{0}+a_{1}X+\cdots +a_{n-1}X^{n-1}+a_{n}X^{n}\in K[X]$, $a_{n}\neq 0$  be a polynomial of degree $n$. Further, let
${\rm V}_{+}$,  \hbox{$(${\it resp}. ${\rm V}_{-})$} denote  the number of sign changes in the sequence $a_{0}, a_{1}, \ldots , a_{n-1}, a_{n}$ {\rm (}resp. in the sequence $a_{0}, -a_{1}, \ldots , (-1)^{n-1}a_{n-1},(-1)^{n} a_{n})$  and  $N_{+} $ $($resp. $N_{-})$ denote the number of positive $($resp. negative$)$ zeros of $f$ $($each zero of $f$ is counted with its multi\-plicity$)$. Then there exist natural numbers $r_{+}$ and $r_{-}\in\N$ such that
$N_{+} ={\rm V}_{+} -2r_{+}$ and $N_{-} ={\rm V}_{-} -2 r_{-}$. Moreover, if all zeros of $f$ belong to $K$, {\rm i.\,e.} if $f$ splits into linear factors in $K[X]$, then $N_{+} ={\rm V}_{+}$ and $N_{-} ={\rm V}_{-}$.
\end{theorem}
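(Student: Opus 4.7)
My plan is: (i) reduce the negative-roots statement to the positive-roots statement via $X \mapsto -X$ and then to the case $a_0 \ne 0$ by extracting a power of $X$; (ii) establish the parity congruence $N_+ \equiv V_+ \pmod 2$ from the intermediate value theorem for real closed fields; (iii) prove $N_+ \le V_+$ by induction on $\deg f$ using Rolle's theorem; (iv) deduce the splitting case from a counting bound on $V_+ + V_-$. For the first reductions, setting $g(X) := f(-X)$ converts negative roots of $f$ into positive roots of $g$ (with multiplicities), and its coefficient sign sequence is exactly $a_0, -a_1, \ldots, (-1)^n a_n$, so the assertion for $N_-$ follows from the assertion for $N_+$ applied to $g$. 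Factoring $f = X^m h$ with $h(0) \ne 0$ leaves all four quantities $N_\pm, V_\pm$ unchanged, so we may assume $a_0 \ne 0$ and focus on $N_+, V_+$.

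For the parity step, $f(0) = a_0$ has sign $\mathrm{sign}(a_0)$ and, for sufficiently large $x \in K$, $f(x)$ has sign $\mathrm{sign}(a_n)$. The intermediate value theorem for polynomials over a real closed field shows that the number of positive roots of $f$ of odd multiplicity has parity $[a_0 a_n < 0]$; since roots of even multiplicity contribute an even amount to $N_+$, the same parity is shared by $N_+$ itself. On the other hand, $V_+$ counts sign changes in a finite sequence whose first nonzero term is $a_0$ and whose last is $a_n$, hence also has parity $[a_0 a_n < 0]$.

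For the key inequality I would induct on $n = \deg f$, the case $n = 0$ being trivial. The derivative $f'(X) = a_1 + 2 a_2 X + \cdots + n a_n X^{n-1}$ has the sign pattern obtained from that of $f$ by deleting $a_0$ (positive-integer multipliers preserve signs), so $V_+(f') = V_+(f) - \epsilon$ with $\epsilon \in \{0, 1\}$, where $\epsilon = 1$ iff $a_0$ and the next nonzero coefficient of $f$ have opposite signs. Rolle's theorem for real closed fields, applied to pairs of consecutive positive roots of $f$ and combined with the observation that a root of $f$ of multiplicity $m$ is a root of $f'$ of multiplicity $m - 1$, yields $N_+(f') \ge N_+(f) - 1$; the inductive hypothesis then gives $N_+(f) \le V_+(f') + 1 = V_+(f) + 1 - \epsilon$. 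If $\epsilon = 1$ we are done; if $\epsilon = 0$, the bound $N_+(f) \le V_+(f) + 1$ together with the parity congruence from step (ii) forces $N_+(f) \le V_+(f)$.

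For the moreover assertion, if $f$ splits in $K[X]$ then $N_+ + N_- + m = n$, and after extracting $X^m$ we have $N_+ + N_- = \deg$. I would then show the reverse counting bound $V_+ + V_- \le \deg$: for each pair $i < j$ of consecutive indices with $a_i, a_j$ nonzero, a short case analysis on the sign of $a_i a_j$ and the parity of $i + j$ shows that the combined contribution to $V_+$ and $V_-$ is at most $j - i$, and summing over all transitions telescopes to $\deg$. Combined with $N_\pm \le V_\pm$ already proved, both inequalities must be equalities. The main obstacle is the step-(iii) interplay between Rolle's theorem and the parity congruence that sharpens the loose inductive bound $N_+ \le V_+ + 1$ to the desired $N_+ \le V_+$ precisely when $\epsilon = 0$; without the parity ingredient the induction does not close. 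Everything else is bookkeeping, modulo the standard facts that the intermediate value theorem and Rolle's theorem hold for polynomials over any real closed field.
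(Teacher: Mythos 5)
Your argument is correct, and it is worth noting at the outset that the paper itself supplies \emph{no} proof of Theorem~3.15: Descartes' rule is recalled there as a classical fact (it is invoked only to deduce Corollary~3.14 from Theorem~3.12), so there is no ``paper proof'' to compare against step by step. What you have written is essentially the standard textbook proof, correctly transplanted from $\R$ to an arbitrary real closed field. The reductions in step~(i) are right: $f(-X)$ has coefficient sequence $(-1)^i a_i$ and swaps negative for positive roots, and extracting $X^m$ changes none of $N_\pm$, $V_\pm$ (for $V_-$ the whole sequence gets multiplied by $(-1)^m$, which preserves sign changes). The parity congruence $N_+\equiv V_+\equiv [a_0a_n<0]\pmod 2$ is sound, since a root of multiplicity $m$ flips the sign of $f$ exactly when $m$ is odd and the polynomial IVT (the paper's Footnote~6) gives constant sign on root-free intervals. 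The inductive step $N_+(f)\le N_+(f')+1\le V_+(f')+1=V_+(f)+1-\epsilon$ closes correctly: when $\epsilon=0$ the parity congruence rules out $N_+=V_+ +1$. This interplay between Rolle and parity is exactly the non-obvious point, and you identified it as such. The ``moreover'' clause via the elementary bound $V_++V_-\le\deg$ (contribution at most $j-i$ per gap between consecutive nonzero coefficients, telescoping to $\deg$) is a clean way to force both inequalities $N_\pm\le V_\pm$ into equalities when $f$ splits. The only ingredient you import beyond what the paper records is Rolle's theorem for polynomials over a real closed field; this does follow from the IVT of Footnote~6 by writing $f=(X-a)^p(X-b)^qg$ with $g$ of constant sign on $[a,b]$ and checking that the cofactor of $(X-a)^{p-1}(X-b)^{q-1}$ in $f'$ takes opposite signs at $a$ and $b$, so citing it as standard is acceptable here, though a one-line justification would make the proof fully self-contained relative to the paper.
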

\medskip

We now recall (from \cite{BS}) that  ``being of type $(p,q)$'' is an open property (with respect to the
\textit{strong topology\,}\footnote{\label{foot:5}\,{\bf Strong topology}\, Let $K$ be a real closed field (see\,Footnote\,\ref{foot:2}). Then
$K$ is equipped with the \so{order topology} which is determined by the base of the open intervals $]\,a,b\,[$, $a,b\in K$, $a<b$. The $K$-vector spaces $K^{n}$, $n\in\N$, are endowed with the \so{product topology} (with the  base given by the open cuboids $]\,a_{1},b_{1}\,[\times\cdots\times]\,a_{n},b_{n}\,[$, $a_{i}<b_{i}$, $i=1, \ldots , n$).  With the ordered and product topology, the addition, the multiplication and the inverse are continuous functions on $K\times K$ and $K^{\times}=K\setminus\{0\}$, respectively.
 Further, polynomial functions  (resp. rational functions $F/G$, $F$, $G\in K[X_{1}, \ldots , X_{n}]$, $G\neq 0$),  in $n$ variables are continuous $K$-valued functions on $K^{n}$ (resp. on $K^{n}\smallsetminus {\rm V}_{K}(G)$, where ${\rm V}_{K}(G):=\{a\in K\mid G(a)=0\}$ is a  zero set of the denominator $G$ which is closed in $K^{n}$. \\[2pt]
 The product topology on $K^{n}$ transfers uniquely to every $n$-dimensional $K$-vector space by a $K$-linear isomorphism $f:V\rightarrow K^{n}\!\!$. Any other isomorphism $g:V\rightarrow K^{n}$ defines the same topology, since $gf^{-1}:K^{n}\rightarrow K^{n}$ and $(gf^{-1})^{-1}=fg^{-1}:K^{n}\rightarrow K^{n}$ are continuous (polynomial) maps. Therefore,  polynomial and rational functions are also defined on any finite
dimensional vector space $V$ by  an isomorphism $f:V\rightarrow K^{n}\!\!$.
This topology on $V$ may be characterized as the smallest topology for which the $K$-linear functions $V\rightarrow K$ are continuous and  is called the \so{strong topology} on $V$, since it is stronger than  the \textit{Zariski topology}  on $V$  if $V\neq0$.}
)  which is an easy consequence of Hurwitz's Criterion\,\ref{thm:3.7}\,:
\smallskip

\begin{lemma}{\rm (cf.\,\cite[Lemma\,1.2]{BS}}\label{lem:3.16}
Let $K$ be a real closed field with notations as in {\rm \ref{mypar:3.1}} and $F_{ij}\in K[T]$  be polynomials such that $F_{ij}\!=\!F_{j\,i}$, $1\le i,j\le n$.  Suppose that  the bilinear form defined by the~symmetric matrix $(F_{ij}(s))_{1\le i,j\le n}\!\in\!\M_{n}(K)$ at  $s\!\in\!K$,    is non-degenerate, then there exists an $\varepsilon>0$ such that the type of the symmetric matrices $\,(F_{ij}(t))_{1\le i,j\le n}$ is the same for all $t\in]\,s-\varepsilon,s+\varepsilon\,[$. In~particular, for non-degenerate symmetric bilinear forms over $K$, ``being of type $(p,q)$'' is an open property.
\end{lemma}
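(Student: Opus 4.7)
The plan is to reduce to Hurwitz's Criterion~\ref{thm:3.7}, whose direct application is blocked by the fact that non-degeneracy of $(F_{ij}(s))$ only tells us that the \emph{top} principal minor is non-zero at $s$, not that all leading principal minors are. The trick is to absorb this by a single $t$-independent congruence that makes all principal minors visibly non-zero at $t=s$, and then to transport this along the neighborhood by continuity of polynomials.

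Write $\mathscr{G}(t):=(F_{ij}(t))_{1\le i,j\le n}\in\M_n(K)$. Since $\mathscr{G}(s)$ is symmetric and non-degenerate, Proposition~\ref{prop:3.2} together with Corollary~\ref{cor:3.6} yields a matrix $\mathscr{A}\in\GL_n(K)$ and non-negative integers $p,q$ with $p+q=n$ such that
\[
{}^{\rm t}\mathscr{A}\,\mathscr{G}(s)\,\mathscr{A}=\mathscr{E}_n^{p,q}=\Diag(\underbrace{1,\ldots,1}_{p},\underbrace{-1,\ldots,-1}_{q}).
\]
Now consider $\mathscr{H}(t):={}^{\rm t}\mathscr{A}\,\mathscr{G}(t)\,\mathscr{A}$. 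Each entry of $\mathscr{H}(t)$ is a $K$-linear combination of entries of $\mathscr{G}(t)$, hence a polynomial in $t$, and for every $t$ the matrices $\mathscr{G}(t)$ and $\mathscr{H}(t)$ are congruent by the fixed matrix $\mathscr{A}$, so they have the same type by Corollary~\ref{cor:3.6}. Thus it suffices to prove the claim for $\mathscr{H}(t)$.

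For $i=1,\ldots,n$, let $D_i(t)\in K[t]$ denote the $i$-th leading principal minor of $\mathscr{H}(t)$, and set $D_0:=1$. At $t=s$, $\mathscr{H}(s)=\mathscr{E}_n^{p,q}$ is diagonal, so $D_i(s)=1$ for $i\le p$ and $D_i(s)=(-1)^{i-p}$ for $p<i\le n$; in particular every $D_i(s)$ is non-zero. Since each $D_i$ is a polynomial in $t$ and the zero set of a non-zero polynomial is finite (equivalently, continuous $K$-valued functions preserve sign near points where they are non-zero, in the strong topology of Footnote~\ref{foot:5}), there exist $\varepsilon_i>0$ such that $D_i(t)\neq 0$ and $\Sign D_i(t)=\Sign D_i(s)$ on $]\,s-\varepsilon_i,s+\varepsilon_i\,[\,$. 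Taking $\varepsilon:=\min_i\varepsilon_i$, the sign-sequence $(D_0,D_1(t),\ldots,D_n(t))$ is identical to the sign-sequence at $t=s$ for every $t\in\,]\,s-\varepsilon,s+\varepsilon\,[\,$.

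On this interval, $\mathscr{H}(t)$ is non-degenerate (since $D_n(t)\neq 0$) and Hurwitz's Criterion~\ref{thm:3.7} applies: the Morse-index of the form given by $\mathscr{H}(t)$ equals the number of sign changes in $(D_0,D_1(t),\ldots,D_n(t))$, which is constant on the interval and equal to $q$. Consequently the type of $\mathscr{H}(t)$, and therefore of $\mathscr{G}(t)$, is $(p,q)$ throughout $]\,s-\varepsilon,s+\varepsilon\,[\,$. The only real obstacle is the initial mismatch between Hurwitz's hypothesis and the given non-degeneracy; this is resolved at the very start by pre-diagonalising via the fixed congruence $\mathscr{A}$, after which the continuity of polynomial principal minors finishes the argument routinely.
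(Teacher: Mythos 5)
Your proof is correct and follows exactly the route the paper indicates: the paper states Lemma~\ref{lem:3.16} without proof, remarking only that it is an easy consequence of Hurwitz's Criterion~\ref{thm:3.7}. Your pre-diagonalisation by a fixed congruence, which makes all leading principal minors non-zero at $t=s$ so that Hurwitz's Criterion becomes applicable on a whole interval via the constancy of sign of the polynomial minors there, is precisely the technical step needed to turn that remark into a proof.
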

\smallskip

We end this section by noting the following important Rigidity Theorem for symmetric bilinear forms  (see\,\cite{BS} which is proved by using Hurwitz's Criterion\,\ref{thm:3.7},  the above Lemma\,\ref{lem:3.16} and the \textit{Intermediate Value Theorem for polynomial functions}\,\footnote{\label{foot:6}\,{\bf Intermediate Value Theorem for polynomial functions}\,
{\it Let $K$ be a real closed field and  $F\in K[T]$ be  a polynomial with coefficients in $K$ such that $F(a)F(b)<0$ for some $a,b\in K$. Then $F$ has a zero in $[a,b]$.}  In other words, the values $F(t)$, $t\in[a,b]$, have the same sign if $F$ has no zero on $[a,b]$. In particular, every polynomial of odd degree has a zero in $K$. A field with this property is called a \so{$2$-field}.  Therefore, \emph{a real closed field is a $2$-field}. Furthermore, every monic polynomial $F$ over a real closed field $K$ has a positive zero in $K$ if $F(0)<0$ (since $F(x)>0$ for ``large'' $x$).
}.
\smallskip

\begin{mypar}{\sc Rigidity Theorem for Quadratic Forms}\label{thm:3.16}\,{\rm (cf.\,\cite[1.3]{BS})}\,
Let $K$ be a real closed field with notations as in {\rm \ref{mypar:3.1}} and let  $R_{ij}(t)=R_{ij}(t_{1},\ldots , t_{n})$, $1\le i\,,\,j\le n$, be rational functions on a line-connected\,\footnote{\label{foot:7}\,{\bf Line-connected subsets}\, Let $V$ be a vector space over a real closed field $K$.
For two points $x,y\in V$, the subset $[x, y]=[y,x]:=\{(1-t)x+ty\mid t\in K,0\le t\le1\}\subseteq V$ is called
 the (closed) \so{line-segment}  connecting $x$ and $y$.  For $x_{0}, \ldots , x_{r}\in V$, $r\ge 1$, the subset  $[x_{1}, \ldots , x_{r}]:=\cup_{i=1}^{r}[x_{i-1},x_{i}]$ is called the \so{broken line from $x_{0}$ to $x_{r}$}.
 A subset $V'\subseteq V$ is called \so{line-connected} if for any two points $x,y\in V'$ there is a broken line from $x$ to $y$ which lies entirely in $V'$.
 Note that, if $K=\R$ and  $U\subseteq V$ is open (in the strong topology, see\,Footnote\,\ref{foot:5}), then the notion ``line-connected'' is equivalent to the topological notion of ``connected''. The only topologically connected subspaces of $K=\Q$  are the singletons. If $V$ is a line, i.\,e.\ $1$-dimensional, and if $x\in V$, then $V\smallsetminus\{x\}$ is not line-connected. However, if $\,\Dim_{K}\!V\ge 2$, then $V\smallsetminus\{x\}$ is always
 line-connected\,: If $u,w\in V\smallsetminus\{x\}$ are arbitrary points, there is always a point $v\in V\setminus\{x\}$ such that $[u,v,w]\subseteq V\smallsetminus\{x\}$.}
subset $U\subseteq K^{n}$ such that the matrices $\mscrR(t)=(R_{ij}(t))_{1\le i,j\le n}\in\M_{n}(K)$, $t\in U$, are symmetric, {\rm i.\,e.}  $R_{ij}=R_{ji}$ for all $1\leq i\,,\,j\leq n$ with a   $\Det\,\mscrR(t)\neq0$ for all $t\in U$. Then all the matrices $\mscrR(t)\in\M_{n}(K)$, $t\in U$, have the same type $\,(p,q)$, or equivalently, the same signature $p-q$.
\end{mypar}

%}

\section{Trace forms and Rational points}\label{sec:TraceForm}

In this section, we recall the results from \cite{BS} (based on the talk of Prof. U.\,Storch at IIT Bombay in November\,2009) on trace forms, their invariants such as  rank, type, signature and their relations with the number of rational points of a finite algebra $A$ over a real closed field. For detailed proofs of these results the reader is recommended to see \cite[$\S$\,3]{BS}.

%{\small

\begin{mypar}\label{mypar:4.1}{\sc Preliminaries\,}\,
In this subsection, we recall the basic concepts from elementary commutative algebra (see  \cite{AM},  \cite{Kunz1980} and \cite{PatilStorch2010}).
\medskip

Let $A$ be an arbitrary commutative ring (with unity).
The set  $\Spec A$ (resp. $\Spm A$) of prime (resp. maximal) ideals in $A$ is called the {\it prime} (resp. {\it maximal}) {\it spectrum} of $A$. The nil-radical $\mathfrak{n}_{A}:= \sqrt{0}=\cap_{\,\gothp\in\Spec A}\,\gothp$ is the intersection of all prime ideals in~$A$.
More generally,  (\so{formal Nullstellensatz})\, $\sqrt{\mscrA }= \cap_{\,\gothp\in\Spec A}\,\{\gothp \mid \mscrA\subseteq \gothp\}$ for every ideal $\mscrA$  in $A$, see \cite{AM},  \cite{Kunz1980}. \\[1mm]
 The intersection $\displaystyle\mscrM_{A}:=\cap_{\,\mscrM\in\Spm A}\,\mscrM$ of all maximal ideals in $A$ is called the {\it Jacobson radical} of $A$.

\begin{alist}
\medskip

\item {\bf The ${\bf K}$-Spectrum and  the set of ${\bf K}$-rational points of a ${\bf K}$-algebra}\, (see \cite{PatilStorch2010})
Let $K$ be a field. Using the universal property of the polynomial algebra $K[X_{1},\ldots , X_{n}]$,   the affine space $K^{n}$ can be identified with the set of $K$-algebra homomorphisms
$\Hom_{\Kalg}(K[X_{1},\ldots,X_{n}]\,,K)$ by identifying the point  $a =
(a_{1},\ldots,a_{n}) \in K^{n}$ with the substitution homomorphism
$\xi_{a}: K[X_{1},\ldots,X_{n}] \rightarrow K$, $X_{i} \mapsto a_{i}$, whose kernel  $\Ker\,\xi_{a}$ is
the maximal ideal $\mathfrak{m}_{a} = \langle X_{1}-a_{1},\ldots,X_{n}-a_{n}\rangle $ in $K[X_{1},\ldots,X_{n}]$. Moreover, every maximal ideal $\mathfrak{m}$ in $K[X_{1},\ldots,X_{n}]$ with $K[X_{1},\ldots,X_{n}]/\mathfrak{m} =K$ is of the type $\mathfrak{m}_{a}$ for a unique point $a=(a_{1},\ldots , a_{n})\in K^{n}$; the component $a_{\,i}$ is determined by the congruence $X_{i} \equiv~a_{i}~ {\rm mod}~ \mathfrak{m}$.
\smallskip

\noindent The subset $\,\KSpec  K[X_{1},\ldots,X_{n}]:=\{\mathfrak{m}_{a} \mid a\in K^{n}\}$ of $\Spm K[X_{1},\ldots,X_{n}]$ is called the {\it $K$-spectrum} of
$K[X_{1},\ldots,X_{n}]$.  We have the identifications\,:
\vspace*{-0.75mm}
\begin{align*}
\enskip\enskip K^{n}  & \xlongleftrightarrow{\hspace*{13mm}}  \enskip \Hom_{\Kalg}(K[X_{1},\ldots,X_{n}]\,,K) \enskip  \xlongleftrightarrow{\hspace*{13mm}}  \enskip \KSpec K[X_{1},\ldots,X_{n}]\,,\\
\enskip a  \enskip \enskip & \xlongleftrightarrow{\hspace*{35mm}} \enskip   \xi_{\,a}  \enskip \xlongleftrightarrow{\hspace*{35mm}} \enskip  \mathfrak{m}_{a}=\Ker
\xi_{\,a}\,.
\vspace*{-2mm}
\end{align*}
More generally, for any $K$-algebra $A$, the map \\[1mm]
\hspace*{20mm} $\,\Hom_{\Kalg}(A\,,K)\longrightarrow
\{\mscrM\in\Spm A\mid A/\mscrM =K\}$, $\xi\longmapsto \Ker\xi$, \\[1mm]
is bijective. Therefore we make the following definition\,:
\smallskip

\noindent For any $K$-algebra $A$ of finite type, the subset
$\,\KSpec A:= \{\mscrM\in\Spm A\mid A/\mscrM =K\}\,$ is
called the $K$-\,\so{spectrum} of $A$ and is denoted by $\KSpec A$. \\[1mm]
Further, if $A\iso K[X_{1}, \ldots , X_{n}]/\mathfrak{A}$ is a representation of the finite $K$-algebra $A$, then the $K$-algebraic set ${\rm V}_{K}(\mathfrak{A} :=\{a\in K^{n}\mid F(a)=0\,  \  \hbox{for all} \ \,F\in\mathfrak{A}\}$ defined by the ideal $\mathfrak{A}$  is called the set of $K$-\,\so{rational points} of $A$.\\[1mm]
Under the above bijective maps, we have the identification ${\rm V}_{K}(\mathfrak{A})= \Hom_{\Kalg}(A\,,K) = \KSpec A$.
For example, since $\C$ is an algebraically closed field, $\Spm \C[X] = \CSpec \C[X]$, but $\RSpec\R[X] \subsetneqq  \Spm\R[X]$.  In fact, the maximal ideal $\mathfrak{m}:=\langle X^{2}+1\rangle \in\Spm\R[X]$  does not belong to $\RSpec\R[X]$. More generally, a field $K$ is algebraically closed if and only if $\Spm K[X] = \KSpec K[X]$,  see\, \cite{AM}\,, \cite{Kunz1980} or  \cite[Theorem\,2.10\,,\,HNS\,3]{GPV2018}\,)
\smallskip

\item
{\bf Local components of a finite algebra}\,
Let $A$ be a finite algebra over a field $K$, i.\,e. $A$ finite dimensional as a $K$-vector space of dimension ${\rm Dim}_{K} A$.  Then  $\Spm A=\Spec A$ (since any finite $K$-algebra which is an integral domain is already a field). Moreover, from  the Chinese Remainder Theorem, it follows immediately that $\Spm A$ \textit{is a finite set}.
In particular,  $\#\Spm A \leq \Dim_{K}\!A$ and equality holds if and only if $A$ is isomorphic to the product $K$-algebra $K^{\,\Dim_{K}\!A}$\,). \\[1mm]
Further, let $\Spm A=\{\mscrM_{1}, \ldots , \mscrM_{r}\}$. Then  the unit group $A^{\times}$ of $A$ is $\,A\!\smallsetminus\,\bigcup_{\,i=1}^{\,r}\mscrM_{i}$ and the canonical homomorphism $A\longrightarrow \prod_{\,i=1}^{\,r}A_{\mscrM_{i}}$ is injective (where $A_{\gothp}$ denotes the localization of $A$ at a prime ideal $\gothp\in \Spec A$). In our special case, it is also surjective and hence an isomorphism, cf. \cite[Corollary\,55.16]{SchejaStorch1988}.
Therefore, \emph{$A$ is the direct product of the local finite $K$-algebras} $A_{i}:=A_{\mscrM_{i}}$, $i=1, \ldots ,  r$, which are called the \so{local components} of $A$. Furthermore, we have\,:\,
$\,\Dim_{K}A=\sum_{\,i=1}^{r}\Dim_{K}A_{i}=\sum_{i=1}^{r}\ell(A_{i})\cdot[K_{i}:K]$,
where, for $\,i= 1, \ldots , r$, $K_{i}=A/\mscrM_{i}$ is the residue class field of $A$ at $\mscrM_{i}$ and $\ell(A_{i})$ the (finite) length of $A_{i}$, i.\,e. the length $\ell$ of a composition series $0=\mscrA_{0}\subsetneq\mscrA_{1}\subsetneq\cdots\subsetneq\mscrA_{\ell}=A_{i}$ with $\mscrA_{j+1}/\mscrA_{j}\cong A/\mscrM_{i}$, $j=1, \ldots , \ell-1$.\\[1mm]
For example, if $K$ is a $2$-field, then $\,[K_{i}:K]\,$ is even if $K_{i}$ is a non-trivial field extension of $K$ and, in particular, $\KSpec A\neq\emptyset$ {\it if $\,\Dim_{K}A$ is odd}.
\smallskip

\noindent
Further,  $\mscrM_{A}=\mscrM_{1}\cap\cdots\cap\mscrM_{r} = \cap_{\gothp\in\Spec A}\, \gothp = \gothn_{A}$ and  \textit{ $\mscrM_{A}=\gothn_{A}=0$, {\rm i.\,e.}  $A$ is reduced if and only if $A=K_{1}\times \cdots\times  K_{r}$ is the product of its residue class fields}. Moreover, if  all the field extensions $K_{i}$ of $K$ are separable, then $A$ is called a (finite) \so{separable} $K$-algebra.

\end{alist}
\end{mypar}
\smallskip

\begin{mypar}\label{mypar:4.2}{\sc The trace form}\,
Let $A$ be a finite algebra over the field $K$. The \so{trace form}  on $A$ over $K$ is the
symmetric $K$-bilinear form $\tr:=\tr_{K}^{A}:A\times A \rightarrow K$, $(f,g)\mapsto \tr_{K}^{A}(fg)$ on $A$. It is a classical tool used to study the $K$-algebra $A$. \\[1mm]
The decomposition of
$A=A_{1}\times\cdots\times A_{r}$ into its local components (cf.\ \ref{mypar:4.1}\,(c)) yields the orthogonal decomposition (see\,Decompsition Theorem\,\ref{decthm:2.13}) \\[0.75mm]
\hspace*{42mm} $\,\tr_{K}^{A}=\tr_{K}^{A_{1}}\operp\cdots\operp\tr_{K}^{A_{r}}\,$ \\[0.5mm]
of the trace form. The degeneration space $A^{^{_{\bot}}}\!\!\! =\! A^{^{_{\bot_{\tr}}}}\!\!\! =\! \{f\in A\mid \tr(Af)\!\! =\! 0\}$ is an ideal in $A$.
\end{mypar}
\smallskip

\begin{lemma}\label{lem:4.3}{\rm (cf.\,\cite[Lemma\,3.1]{BS})}\,
Let $A$ be a finite algebra over an arbitrary field $K$
 and let $A^{^{_{\bot}}}$ be the degeneration space of the trace form $\tr_{K}^{A}$. Then radical $\mscrM_{A}=\gothn_{A}\subseteq A^{^{_{\bot}}}$. Moreover, equality holds  if and only if all the residue class fields of $A$ are separable over $K$, {\rm i.\,e.} if and only if the reduction $A_{\red} = A/\gothn_{A}$ is a separable $K$-algebra. ---\,In particular, the trace form is non-degenerate if and only if $A$ is a separable $K$-algebra.
\end{lemma}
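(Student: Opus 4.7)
My plan is to first establish the inclusion $\gothn_A\subseteq A^{\bot}$, then reduce the claimed equality to the local components of $A$, and finally handle the local case by a filtration argument. For the inclusion: since $\gothn_A$ is an ideal, for any $f\in\gothn_A$ and any $g\in A$ the product $fg$ is nilpotent, so the left-multiplication $\lambda_{fg}\in\End_K A$ is a nilpotent $K$-endomorphism of $A$. All its eigenvalues in an algebraic closure of $K$ then vanish, whence $\tr_K^A(fg)=\tr(\lambda_{fg})=0$; that is, $f\in A^{\bot}$.

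Next, I would reduce the equality $\gothn_A=A^{\bot}$ to the local case. Decomposing $A=A_1\times\cdots\times A_r$ into its local components (see \ref{mypar:4.1}(c)), with maximal ideals $\mscrM_i$ and residue fields $K_i=A_i/\mscrM_i$, the decomposition is orthogonal for the trace form; hence $A^{\bot}=A_1^{\bot}\times\cdots\times A_r^{\bot}$, while $\gothn_A=\mscrM_1\times\cdots\times\mscrM_r$. It therefore suffices to show, for each local component $A_i$, that $A_i^{\bot}=\mscrM_i$ if and only if $K_i/K$ is separable.

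So assume $A$ is local with maximal ideal $\mscrM$ and residue field $K'=A/\mscrM$. One checks directly that $A^{\bot}$ is an ideal of $A$, and by the inclusion already shown it contains $\mscrM$. Since $\mscrM$ is the unique maximal ideal of the Artinian ring $A$, either $A^{\bot}=\mscrM$ or $A^{\bot}=A$; thus $A^{\bot}=\mscrM$ iff the linear form $f\mapsto\tr_K^A(f)$ is not identically zero on $A$. To decide this I would use the $\mscrM$-adic filtration $A\supset\mscrM\supset\mscrM^2\supset\cdots\supset\mscrM^n=0$: each quotient $\mscrM^i/\mscrM^{i+1}$ is a $K'$-vector space on which $\lambda_f$ acts by multiplication by $\bar f\in K'$. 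In a $K$-basis of $A$ adapted to this filtration, $\lambda_f$ is block upper triangular, and summing the $K$-traces of the diagonal blocks gives $\tr_K^A(f)=\ell(A)\cdot\tr_K^{K'}(\bar f)$. The classical fact that the trace $\tr_K^{K'}$ of a finite field extension is not identically zero exactly when $K'/K$ is separable then finishes the local case, and hence the equality claim. The ``in particular'' statement is immediate: $\tr_K^A$ is non-degenerate iff $A^{\bot}=0$, which by the main assertion forces $\gothn_A=0$ together with separability of every $K_i/K$, that is, $A\cong K_1\times\cdots\times K_r$ is a separable $K$-algebra.

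The main obstacle I expect is the local computation: identifying the induced $K$-bilinear form on the residue field as a (nonzero) scalar multiple of $\tr_K^{K'}$ via the filtration, and then pulling in the classical separability criterion. The remaining ingredients---that $A^{\bot}$ is an ideal containing $\gothn_A$, the orthogonal product decomposition, and the trace-form criterion $\tr_K^{K'}\not\equiv 0\Leftrightarrow K'/K$ separable for finite field extensions---are all standard.
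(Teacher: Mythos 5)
The paper itself contains no proof of this lemma --- it is quoted from \cite[Lemma\,3.1]{BS} --- so your argument can only be measured against the standard one, and it is indeed that argument: nilpotents give nilpotent multiplication operators, hence lie in $A^{\bot}$; the trace form decomposes orthogonally over the local components; and on a local component with maximal ideal $\mscrM$, residue field $K'$ and length $\ell$, the $\mscrM$-adic filtration yields $\tr_{K}^{A}(f)=\ell\cdot\tr_{K}^{K'}(\bar f)$, so that $A^{\bot}$ (an ideal containing $\mscrM$) equals $\mscrM$ rather than all of $A$ precisely when this linear form is nonzero. All of that is correct.

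The gap is in the very last inference. From $\tr_{K}^{A}(f)=\ell\cdot\tr_{K}^{K'}(\bar f)$ you conclude that the form is nonzero iff $\tr_{K}^{K'}\neq0$, i.\,e. iff $K'\,\vert\,K$ is separable; but the integer $\ell$ enters here as an element of $K$, and if ${\rm Char}\,K=p>0$ divides $\ell$ then $\tr_{K}^{A}$ vanishes identically on that component no matter what $K'$ is. Your own formula thus produces a counterexample to the ``moreover'' clause as stated over an arbitrary field: for $A=\mathbb{F}_{2}[X]/\langle X^{2}\rangle$ the residue field is (trivially) separable, yet $\ell=2=0$ in $\mathbb{F}_{2}$, so $\tr_{K}^{A}\equiv0$ and $A^{\bot}=A\supsetneq\gothn_{A}=\langle x\rangle$. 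So the step ``then finishes the local case'' fails exactly when ${\rm Char}\,K$ divides some length $\ell(A_{i})$; to repair it one must either assume ${\rm Char}\,K=0$ (which is all this paper needs, since from Theorem\,\ref{thm:4.5} onward $K$ is real closed) or add that hypothesis explicitly. Note that the final ``in particular'' survives in every characteristic: non-degeneracy forces $\gothn_{A}=0$ by the inclusion already proved, hence $\ell(A_{i})=1$ for every local component, and your argument then closes without the problematic factor.
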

 \smallskip

\begin{corollary}\label{cor:4.4}
Let $A$ be a finite separable algebra over an arbitrary field $K$. Then

\vspace*{-1.5mm}
\hspace*{25mm} $\,\displaystyle \rank\,\tr_{K}^{A}=\Dim_{K}(A/\mscrM_{A})=\sum_{i=1}^{r}\,[K_{i}:K]$.
\vspace*{-2mm}

Moreover, if $K$ is an ordered field, then\,:

\vspace*{-1.5mm}
\hspace*{20mm}$\displaystyle\type\,\tr_{K}^{A}=\sum_{i=1}^{r}\type \tr_{K}^{K_{i}}\,$\  and \  $\,\displaystyle \sign \tr_{K}^{A}=
\sum_{i=1}^{r}\sign\,\tr_{K}^{K_{i}}$.
\vspace*{-1mm}
\end{corollary}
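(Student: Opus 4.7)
\emph{Plan.} The proof splits into the rank part and the type/signature part, both exploiting the product decomposition $A = A_1 \times \cdots \times A_r$ into local components from \ref{mypar:4.1}(c) together with the orthogonal decomposition $\tr_K^A = \tr_K^{A_1} \operp \cdots \operp \tr_K^{A_r}$ of the trace form recalled in \ref{mypar:4.2}.

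\emph{Rank.} Since $A$ is separable, Lemma\,\ref{lem:4.3} identifies the degeneration space $A^{\bot}$ with the radical $\mscrM_A = \gothn_A$, so $\tr_K^A$ descends to a non-degenerate form on $A/\mscrM_A$; consequently $\rank \tr_K^A = \Dim_K(A/\mscrM_A)$. The second equality then follows from the identification $A/\mscrM_A \iso \prod_{i=1}^r A_i/\mscrM_i A_i = \prod_{i=1}^r K_i$ by a direct dimension count.

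\emph{Type and signature.} For $K$ ordered, the orthogonal decomposition $\tr_K^A = \operp_i \tr_K^{A_i}$ makes type additive (coordinate-wise as a pair) and signature additive (as an integer), so it is enough to prove $\type \tr_K^{A_i} = \type \tr_K^{K_i}$ and $\sign \tr_K^{A_i} = \sign \tr_K^{K_i}$ for each local component. Since the residue field $K_i$ is separable, Lemma\,\ref{lem:4.3} applied to $A_i$ yields $A_i^{\bot} = \mscrM_i A_i$, so $\tr_K^{A_i}$ descends to a non-degenerate form $\overline{\Phi}_i$ on $A_i/\mscrM_i A_i = K_i$. I would identify $\overline{\Phi}_i$ via the $\mscrM_i A_i$-adic filtration $A_i \supset \mscrM_i A_i \supset (\mscrM_i A_i)^2 \supset \cdots \supset 0$: for $x\in A_i$, left multiplication $\lambda_x$ preserves this filtration and acts on each $K_i$-vector-space quotient $(\mscrM_i A_i)^j/(\mscrM_i A_i)^{j+1}$ as multiplication by the residue $\bar x \in K_i$, giving
\[
\tr(\lambda_x) = \sum_j \dim_{K_i}\bigl((\mscrM_i A_i)^j/(\mscrM_i A_i)^{j+1}\bigr) \cdot \tr_K^{K_i}(\bar x) = \ell(A_i)\,\tr_K^{K_i}(\bar x).
\]
Hence $\overline{\Phi}_i = \ell(A_i)\cdot \tr_K^{K_i}$, and since the length $\ell(A_i)$ is a positive integer (so positive in the ordered field $K$), scaling $\tr_K^{K_i}$ by $\ell(A_i)$ preserves both type and signature, closing the step.

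\emph{Main obstacle.} The one non-formal ingredient is the block-triangular filtration computation identifying $\overline{\Phi}_i$ as a positive integer multiple of $\tr_K^{K_i}$. In the frequently encountered special case where the separability hypothesis already forces $A$ to be reduced (so each $A_i = K_i$ and $\ell(A_i) = 1$), this step is vacuous and the corollary becomes a one-line consequence of orthogonal-sum additivity together with the rank formula $\rank \tr_K^A = \Dim_K A$.
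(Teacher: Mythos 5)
Your argument is correct, and its skeleton --- the decomposition $A=A_{1}\times\cdots\times A_{r}$ into local components, the resulting orthogonal decomposition $\tr_{K}^{A}=\operp_{i}\tr_{K}^{A_{i}}$ from \ref{mypar:4.2}, additivity of rank, type and signature over orthogonal direct sums, and Lemma\,\ref{lem:4.3} to identify the radical of the trace form --- is exactly what the corollary is meant to rest on. Where you diverge is in not using the full force of the hypothesis: by the ``in particular'' of Lemma\,\ref{lem:4.3}, a finite \emph{separable} $K$-algebra in this paper's sense has non-degenerate trace form, hence is reduced, so $\mscrM_{A}=0$, each local component $A_{i}$ is already the field $K_{i}$ with $\ell(A_{i})=1$, and the whole statement is the one-line consequence you describe in your final remark. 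Your filtration computation $\tr(\lambda_{x})=\ell(A_{i})\,\tr_{K}^{K_{i}}(\bar{x})$ is nevertheless correct (block-triangularity of $\lambda_{x}$ with respect to the $\mathfrak{m}_{i}$-adic filtration, plus transitivity of the trace from $K_{i}$ down to $K$), and combined with the invariance of type under passage to the quotient by the radical (as in \ref{mypar:4.9}.3) and the positivity of the integer $\ell(A_{i})$ in an ordered field, it proves the stronger statement in which only the residue class fields are assumed separable while $A$ itself may be non-reduced --- which is precisely the setting where the middle term $\Dim_{K}(A/\mscrM_{A})$ in the rank formula carries real content. So you have proved more than is asked, at the cost of one extra (correct) computation; just be aware that under the paper's conventions the hypothesis already collapses the general case to the trivial one.
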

\smallskip

Now, we state the following important and classical criterion for the existence of $K$-rational points for real closed fields which is proved in  \cite{BS}.
\smallskip

\begin{theorem}{\rm (cf.\,\cite[Theorem\,3.2]{BS})}\label{thm:4.5}\,
Let $A$ be a finite  algebra over a real closed field $K$. Then\,:
\vspace*{1mm}

\hspace*{40mm} $\, \sign\,\tr_{K}^{A}=\#\,\KSpec A$.
\vspace*{1mm}

In particular, $K$ is a residue class field of $A$ if and only if $\,\sign\,\,\tr_{K}^{A}\neq0$.
\end{theorem}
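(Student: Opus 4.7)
The plan is to reduce to the case where $A$ is local by decomposing the trace form along the local decomposition of $A$, and then to compute the signature on each local component in terms of its residue class field.

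First, I would decompose $A=A_{1}\times\cdots\times A_{r}$ into its local components with residue fields $K_{i}:=A_{i}/\mathfrak{m}_{i}$ (see \ref{mypar:4.1}). The trace form splits orthogonally as $\tr_{K}^{A}=\tr_{K}^{A_{1}}\operp\cdots\operp\tr_{K}^{A_{r}}$ by \ref{mypar:4.2}, and since signatures are additive under $\operp$, one has $\sign\tr_{K}^{A}=\sum_{i=1}^{r}\sign\tr_{K}^{A_{i}}$. On the other hand $\KSpec A=\bigsqcup_{i}\KSpec A_{i}$, and because each $A_{i}$ is local, $\#\,\KSpec A_{i}$ equals $1$ when $K_{i}=K$ and $0$ otherwise. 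It thus suffices to show that $\sign\tr_{K}^{A_{i}}$ equals $1$ when $K_{i}=K$ and equals $0$ when $K_{i}\ne K$.

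Fix a local component $A_{i}$. Because $K$ is real closed, ${\rm Char}\,K=0$, so $K_{i}\,\vert\,K$ is separable and Lemma\,\ref{lem:4.3} gives $A_{i}^{\bot}=\mathfrak{m}_{i}$; hence $\tr_{K}^{A_{i}}$ induces a non-degenerate symmetric $K$-bilinear form $\bar{\Phi}_{i}$ on $A_{i}/\mathfrak{m}_{i}=K_{i}$ whose signature coincides with that of $\tr_{K}^{A_{i}}$. Now fix a composition series of $A_{i}$ as an $A_{i}$-module; its $\ell(A_{i})$ successive quotients are all isomorphic to the unique simple $A_{i}$-module $K_{i}$. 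For $x\in A_{i}$, the multiplication $\lambda_{x}$ preserves this filtration and acts on each quotient as multiplication by the residue $\bar{x}\in K_{i}$, so computing $K$-traces along the filtration yields $\tr_{K}^{A_{i}}(x)=\ell(A_{i})\cdot\tr_{K}^{K_{i}}(\bar{x})$. Substituting $xy$ for $x$ gives $\bar{\Phi}_{i}=\ell(A_{i})\cdot\tr_{K}^{K_{i}}$ as bilinear forms on $K_{i}$, and since $\ell(A_{i})>0$, scaling preserves the signature, so $\sign\tr_{K}^{A_{i}}=\sign\tr_{K}^{K_{i}}$.

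Finally, since $K$ is real closed, every finite algebraic extension of $K$ equals either $K$ itself or its algebraic closure $\C_{K}=K[\,{\rm i}\,]$. If $K_{i}=K$, the form $\tr_{K}^{K}$ is $(x,y)\mapsto xy$, with Gram matrix $(1)$, of type $(1,0)$ and signature $1$. If $K_{i}=\C_{K}$, using the $K$-basis $\{1,{\rm i}\}$ together with $\tr_{K}^{\C_{K}}(z)=z+\overline{z}$, the Gram matrix of $\tr_{K}^{\C_{K}}$ is ${\rm Diag}(2,-2)$, of type $(1,1)$ and signature $0$. Combining with the previous paragraph, $\sign\tr_{K}^{A}=\sum_{i}\sign\tr_{K}^{K_{i}}=\#\{i\mid K_{i}=K\}=\#\,\KSpec A$. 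The ``in particular'' assertion is immediate: $K$ is a residue class field of $A$ iff some $K_{i}$ equals $K$ iff $\sign\tr_{K}^{A}\ne 0$.

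The step I expect to be the main obstacle is the trace identity $\tr_{K}^{A_{i}}(x)=\ell(A_{i})\cdot\tr_{K}^{K_{i}}(\bar{x})$, which couples the endomorphism-trace of $\lambda_{x}$ on $A_{i}$ to the field trace of $\bar{x}$ via a composition-series filtration; once this is in hand, the remainder is orthogonal-sum bookkeeping together with the two small Gram-matrix calculations over $K$ and $\C_{K}$.
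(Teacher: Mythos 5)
Your argument is correct. Note that the paper itself does not prove Theorem\,\ref{thm:4.5}: it defers entirely to \cite[Theorem\,3.2]{BS}, so there is no in-text proof to compare against line by line. What you have written is a sound, self-contained reconstruction built from exactly the ingredients the paper assembles around the statement: the local decomposition $A=A_{1}\times\cdots\times A_{r}$ of \ref{mypar:4.1}, the orthogonal splitting $\tr_{K}^{A}=\tr_{K}^{A_{1}}\operp\cdots\operp\tr_{K}^{A_{r}}$ of \ref{mypar:4.2} with additivity of the type, and the computation $\sign\tr_{K}^{\C_{K}}=0$ of Example\,\ref{ex:4.6}. The step you flag as the main obstacle, $\tr_{K}^{A_{i}}(x)=\ell(A_{i})\cdot\tr_{K}^{K_{i}}(\bar{x})$, is fine: the terms of a composition series are $A_{i}$-submodules, so $\lambda_{x}$ is block upper triangular with respect to an adapted $K$-basis, each diagonal block is multiplication by $\bar{x}$ on a module isomorphic to the unique simple module $K_{i}$, and the $K$-trace is invariant under such isomorphisms; since $\ell(A_{i})>0$ in the ordered field $K$, scaling by it preserves the type. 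This identity in fact gives you $\mathfrak{m}_{i}\subseteq A_{i}^{\bot}$ directly, so the appeal to Lemma\,\ref{lem:4.3} (only the easy containment of which is needed to descend the form to $K_{i}$ without changing the signature) could even be dropped. The only external input is that a finite extension of a real closed field is $K$ or $\C_{K}$, which the paper records in Footnote\,\ref{foot:2} and uses in Example\,\ref{ex:4.6}; be aware that in the source \cite{BS} this theorem sits inside a proof of the Fundamental Theorem of Algebra, so their treatment is arranged to make that dependence explicit, whereas here it is harmless to assume it.
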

\smallskip

\begin{example}\label{ex:4.6}
Let $K$ be a real closed field. Then there is a unique (up to isomorphism) non-trivial finite field extension $L\,\vert\,K$, namely, the quadratic field $L=\C_{K}=K[\,{\rm i}\,]$  with ${\rm i}^{2}=-1$, of \so{complex numbers over} $K$ (which is the  algebraic closure of $K$).   The Gram's matrix of the trace form ${\tr}^{\C _{K}}_{K}$  of $\C_{K}$ over $K$ with respect to the basis $1$, ${\rm i}\,$ is the matrix
\vspace*{-1mm}
$$\left(\begin{array}{cc}
\tr\,(1)&\tr\,({\rm i})\\
\tr\,({\rm i})&\tr\,(-1)
\end{array}\right)\,=\,
\left(\begin{array}{cr}
2&0\\
0&-2
\end{array}\right)\,.$$
\vspace*{-1mm}
Therefore  $\,\type\,\tr_{K}^{\C _{K}}=(1,1)\,$ and $\,\sign\,\tr_{K}^{\C _{K}}=0$.
\end{example}
\smallskip

\begin{corollary}\label{cor:4.7}
Let $A$ be a finite algebra over a real closed field $K$. Then the trace form $\tr_{K}^{A}$ is positive definite if and only if $A$ is separable over $K$ and $A$ splits over $K$, {\rm i.\,e.} there exists an isomorphism of $K$-algebras  $A\iso K^{\,\Dim_{K}A}$.
\end{corollary}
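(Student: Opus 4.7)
Set $n:=\Dim_{K}A$ and recall that the trace form is positive definite precisely when $\type\,\tr_{K}^{A}=(n,0)$, so that $\,\rank\,\tr_{K}^{A}=n\,$ and $\,\sign\,\tr_{K}^{A}=n\,$ simultaneously. My plan is to treat the two implications separately, using Lemma\,\ref{lem:4.3} and Theorem\,\ref{thm:4.5} for the forward direction, and the orthogonal decomposition of the trace form in \ref{mypar:4.2} for the converse.

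\textbf{Forward direction.} Assume $\tr_{K}^{A}$ is positive definite. Then in particular it is non-degenerate, so by Lemma\,\ref{lem:4.3} the $K$-algebra $A$ is separable. Since the signature equals $n$, Theorem\,\ref{thm:4.5} gives $\#\,\KSpec A=n$. On the other hand, from the structure recalled in \ref{mypar:4.1}(c) we have the general bound $\#\,\KSpec A\le\#\,\Spm A\le n$, with the second inequality being an equality exactly when $A\iso K^{n}$ as $K$-algebras. The chain of inequalities $n=\#\,\KSpec A\le\#\,\Spm A\le n$ forces equality throughout, so $A$ splits as desired.

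\textbf{Converse.} Suppose $A$ is separable and $A\iso K^{n}$. Separability is clear (a product of copies of $K$ has separable residue fields), and splitting is the hypothesis. Under the decomposition of \ref{mypar:4.2} into local components, $\tr_{K}^{A}=\operp_{i=1}^{n}\tr_{K}^{K}$. The trace form of $K$ over itself is the one-dimensional bilinear form $(a,b)\mapsto ab$, whose Gram matrix in the basis $\{1\}$ is $(1)$, hence positive definite; the orthogonal direct sum of positive definite forms is positive definite. (Equivalently, one may invoke Corollary\,\ref{cor:4.4}: $\type\,\tr_{K}^{A}=\sum_{i=1}^{n}\type\,\tr_{K}^{K}=(n,0)$.)

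\textbf{Main obstacle.} The substantive input is packed into Lemma\,\ref{lem:4.3} and Theorem\,\ref{thm:4.5}; once those are in hand, the only subtle point is the squeeze $n=\sign\,\tr_{K}^{A}=\#\,\KSpec A\le\#\,\Spm A\le\Dim_{K}A=n$ together with the characterisation in \ref{mypar:4.1}(c) of when equality $\#\,\Spm A=\Dim_{K}A$ upgrades to $A\iso K^{n}$. There is no real calculation to do beyond Example\,\ref{ex:4.6}-style observations about $\tr_{K}^{K}$.
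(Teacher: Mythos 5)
Your proof is correct. The paper states Corollary\,\ref{cor:4.7} without a printed proof (deferring to \cite{BS}), and your argument uses exactly the ingredients the surrounding text assembles for it: Lemma\,\ref{lem:4.3} for separability, Theorem\,\ref{thm:4.5} together with the bound $\#\,\Spm A\le\Dim_{K}A$ (with equality iff $A\iso K^{\Dim_{K}A}$) from \ref{mypar:4.1} for the splitting, and the orthogonal decomposition of \ref{mypar:4.2} for the converse. The only variant worth mentioning, closer to the placement of Example\,\ref{ex:4.6}, is to argue the forward direction by noting that non-degeneracy forces $A=K_{1}\times\cdots\times K_{r}$ with each $K_{i}\in\{K,\C_{K}\}$ and that $\type\,\tr_{K}^{\C_{K}}=(1,1)$ has a negative part, so positive definiteness rules out every complex factor; your counting squeeze $n=\sign\,\tr_{K}^{A}=\#\,\KSpec A\le\#\,\Spm A\le n$ is an equivalent repackaging of the same fact.
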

\smallskip

\begin{corollary}\label{cor:4.8}
Let $K$ be a real closed field and $f\in K[X]$ be a monic polynomial. Then all zeros of $f$ (in $\overline{K}$) belong to $K$ and are simple if and only if the trace form $\tr_{K}^{A}$ of the $K$-algebra $A:=K[X]/\langle f\rangle$ is positive definite.
\end{corollary}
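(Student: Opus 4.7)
The plan is to reduce the statement to Corollary \ref{cor:4.7} by translating its two hypotheses on $A=K[X]/\langle f\rangle$ (separability and splitting) into statements about the roots of $f$. By Corollary \ref{cor:4.7}, $\tr_K^A$ is positive definite if and only if $A$ is separable over $K$ and $A$ splits over $K$, i.e.\ $A\cong K^{\,\Dim_K A}=K^{\deg f}$. So the task reduces to showing that these two conditions together are equivalent to $f$ splitting in $K[X]$ into distinct linear factors.

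First I would factor $f$ into monic irreducible factors in $K[X]$, writing $f=p_1^{e_1}\cdots p_s^{e_s}$ with the $p_i$ pairwise coprime. The Chinese Remainder Theorem yields the decomposition
\[
A=K[X]/\langle f\rangle\;\iso\;\prod_{i=1}^{s} K[X]/\langle p_i^{e_i}\rangle,
\]
and each factor $A_i:=K[X]/\langle p_i^{e_i}\rangle$ is a local finite $K$-algebra with residue class field $K_i:=K[X]/\langle p_i\rangle$. This is precisely the local decomposition of $A$ described in \ref{mypar:4.1}(c), so the local components of $A$ are the $A_i$.

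Next I would identify when $A$ is separable. Since a real closed field has characteristic $0$ (see Footnote \ref{foot:2}), every algebraic extension $K_i\,\vert\,K$ is automatically separable, so by Lemma \ref{lem:4.3} the separability of $A$ is equivalent to $A$ being reduced, i.e.\ $\mathfrak{n}_A=0$. In the product decomposition above this means $A_i=K_i$ for each $i$, which happens precisely when every $e_i=1$, i.e.\ when $f$ is squarefree. Equivalently, $f$ has no multiple root in $\overline{K}=\C_K$.

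Finally I would characterize when the separable $A$ splits as $K^{\deg f}$. From $A\iso \prod_{i=1}^{s} K_i$ and the equality $\sum_i [K_i:K]=\deg f$, we get $A\iso K^{\deg f}$ exactly when $[K_i:K]=1$ for every $i$, i.e.\ when each irreducible factor $p_i$ is linear. Combined with the squarefreeness, this says precisely that $f$ is a product of pairwise distinct monic linear factors in $K[X]$, which is the condition that all zeros of $f$ lie in $K$ and are simple. Assembling the three steps via Corollary \ref{cor:4.7} gives the stated equivalence. The only subtlety, and the step I would be most careful about, is the invocation of characteristic zero to guarantee that reducedness of $A$ already implies separability (without it one would obtain $f$ separable rather than merely squarefree).
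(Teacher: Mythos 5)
Your proposal is correct, and it is the natural (surely the intended) derivation: the paper states Corollary~\ref{cor:4.8} immediately after Corollary~\ref{cor:4.7} without any proof, deferring to \cite{BS}, so there is no argument in the text to compare against. Your reduction via the Chinese Remainder decomposition $A\cong\prod_i K[X]/\langle p_i^{e_i}\rangle$, the observation that in characteristic $0$ separability of $A$ amounts to reducedness (i.e.\ $f$ squarefree), and the identification of the splitting condition $A\cong K^{\deg f}$ with all $p_i$ being linear, together give exactly the equivalence asserted.
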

\smallskip

For a partial generalization (see Theorem\,\ref{thm:4.10} below) of Theorem\,\ref{thm:4.5} and applications one can also consider the following  more general trace forms\,:
\smallskip

\begin{mypar}\label{mypar:4.9}{\sc Generalized trace forms\,}\,
Let  \\[1mm]
\hspace*{25mm} ${\rm Sym}_{K}(V,K):=\{\Phi\in {\rm Mult}_{K}\,(V,K) \mid \Phi \ \hbox{is symmetric} \,\}$  \\[1mm]
be the  $K$-vector space of all symmetric bilinear forms on $V$ and
consider a $K$-linear embedding \\[2mm]
\hspace*{10mm} $\,E_{A|K}:=\Hom_{K}(A,K) \longrightarrow {\rm Sym}_{K}(V,K)$, $\,\alpha\longmapsto \Phi_{\alpha}:A\times A\to K\;, \, (f,g)\mapsto\alpha(fg)\,$. \\[2mm]
The elements of the image of this map are called \so{generalized trace forms} on $A$.
The $A$-module $E_{A\,\vert K}$ ( with the scalar multiplication $(g\alpha)(f):=\alpha(fg)$ for $\alpha\in E$, $g$, $f\in A$)  is  called the \so{dualizing module} of $A$.
Therefore\,:
\smallskip \\
{\bf  \ref{mypar:4.9}.1}\,  {\it   $\Phi_{\alpha}(f,g)=(g\alpha)(f)=(f\alpha)(g)$ and the degeneration space $A^{^{_{\bot_{\alpha}}}}$ of $\,\Phi _{\alpha}$  is the largest ideal of $A$ contained in $\Ker\,\alpha\,$.}
\smallskip \\
{\bf  \ref{mypar:4.9}.2}\, {\it Let $\overline{\alpha}:A/A^{^{_{\bot_{\alpha}}}}\to K$ be  the linear form on $\overline{A}:=A/A^{^{_{\bot_{\alpha}}}}$ induced by $\alpha$,  then
$\rank\Phi_{\alpha}=\rank\Phi_{\,\overline{\alpha}}$ and the induced  bilinear form $\Phi_{\,\overline{\alpha}}$ is non-degenerate on $\overline{A}$.}
\smallskip \\
{\bf  \ref{mypar:4.9}.3}\, {\it Moreover, if $K$ is an ordered field,  then}\,: \\[1mm]
\hspace*{30mm} \hbox{$\type\Phi_{\alpha}=\type\Phi_{\,\overline{\alpha}}\,$   and   $\,\sign\Phi_{\alpha}=\sign \Phi_{\,\overline{\alpha}}$.}
\medskip

For example, for  a fixed $h\in A$, the symmetric bilinear from $\Phi_{h}:A\times A \rightarrow K$, $(f, f')\mapsto \tr_{K}^{A}(hff')$ is the generalized trace from on $A$ with respect to the $K$-linear form $\lambda_{\,h}:A\rightarrow A$, $g\mapsto hg$.
\smallskip

We shall use these particular generalized trace forms on $A$ and the following partial generalization of the  Theorem\,\ref{thm:4.5}\,   in the proof of Theorem\,\ref{thm:5.5}.

\end{mypar}
\smallskip

\begin{theorem}{\rm (cf.\,\cite[Theorem\,3.4]{BS})}\label{thm:4.10}\,
Let $\alpha$ be a $K$-linear form on a finite  algebra $A$ over a
real closed field $K$. If $\,\sign\Phi_{\alpha}\neq0$, then $A$ has a $K$-rational point, {\rm i.\,e.}\,  $\KSpec A\neq\emptyset$.
\end{theorem}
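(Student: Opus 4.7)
The plan is to imitate the strategy behind Theorem\,\ref{thm:4.5}: decompose $A$ into its local components, show that only those components whose residue field equals $K$ can contribute to the signature of any generalized trace form, and conclude that $\,\sign\Phi_{\alpha}\neq 0\,$ forces at least one local component with residue field $K$.

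First, using the decomposition $A=A_{1}\times\cdots\times A_{r}$ of \ref{mypar:4.1}\,(c) into local components, observe that $A_{i}A_{j}=0$ in $A$ whenever $i\neq j$, so this decomposition is orthogonal with respect to every generalized trace form on $A$. Writing $\alpha_{i}:=\alpha\vert A_{i}$, this yields
\begin{equation*}
\Phi_{\alpha}=\operp_{i=1}^{r}\Phi_{\alpha_{i}}, \qquad \sign\Phi_{\alpha}=\sum_{i=1}^{r}\sign\Phi_{\alpha_{i}},
\end{equation*}
so by hypothesis there is some index $i$ with $\sign\Phi_{\alpha_{i}}\neq 0$. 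Since the projection $A\twoheadrightarrow A_{i}\twoheadrightarrow K_{i}$ lies in $\KSpec A$ whenever $K_{i}=K$, it suffices to prove that the residue class field $K_{i}=A_{i}/\mathfrak{m}_{i}$ must then equal $K$.

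Suppose for contradiction that $K_{i}\neq K$. Since $K$ is real closed, Artin\,-\,Schreier (see Footnote\,\ref{foot:2}) forces $K_{i}=\C_{K}=K[\,{\rm i}\,]$. Next, lift a square root of $-1$ from $K_{i}$ to an element $j\in A_{i}$ with $j^{2}=-1$, which is possible by Hensel's lemma applied to $X^{2}+1\in A_{i}[X]$\,: the residue factorization $(X-{\rm i})(X+{\rm i})\in\C_{K}[X]$ consists of coprime factors, and $A_{i}$ is a complete Artinian local ring. Multiplication by $j$ is then a $K$-linear automorphism $J\colon A_{i}\to A_{i}$ with $J^{2}=-\id_{A_{i}}$, and for all $f,g\in A_{i}$
\begin{equation*}
\Phi_{\alpha_{i}}(Jf,Jg)=\alpha_{i}(j^{2}fg)=-\Phi_{\alpha_{i}}(f,g).
\end{equation*}
Consequently, $J$ maps any positive-definite subspace $W\subseteq A_{i}$ bijectively onto a negative-definite subspace $J(W)$ of the same $K$-dimension (because $\Phi_{\alpha_{i}}(Jw,Jw)=-\Phi_{\alpha_{i}}(w,w)<0$ for $0\neq w\in W$), and symmetrically in the other direction. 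By Sylvester's Law of Inertia\,\ref{thm:3.4}, if $\type\Phi_{\alpha_{i}}=(p_{i},q_{i})$, then $p_{i}\leq q_{i}$ and $q_{i}\leq p_{i}$, hence $\sign\Phi_{\alpha_{i}}=p_{i}-q_{i}=0$, contradicting the choice of $i$.

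The step I expect to be the main obstacle is the lifting of $\sqrt{-1}$: producing $j\in A_{i}$ with $j^{2}=-1$ requires Hensel's lemma in the local Artinian ring $A_{i}$, or equivalently Cohen's structure theorem furnishing a coefficient field $\C_{K}\hookrightarrow A_{i}$. Both rest on the separability of $\C_{K}\vert K$, which is automatic since $\mathrm{char}\,K=0$. Once $j$ is available, the remainder is a direct application of the anti-isometry identity $\Phi_{\alpha_{i}}(Jf,Jg)=-\Phi_{\alpha_{i}}(f,g)$ together with Sylvester's law, and no further technical difficulties arise.
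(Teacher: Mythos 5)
Your argument is correct, and it is essentially self-contained. Note that the paper itself does not prove Theorem\,\ref{thm:4.10}; it only cites \cite[Theorem\,3.4]{BS}, so there is no in-paper proof to compare against step by step. Your route fits naturally into the paper's framework: the orthogonal splitting $\Phi_{\alpha}=\operp_{i}\Phi_{\alpha_{i}}$ over the local components of \ref{mypar:4.1}\,(c) is valid because $A_{i}A_{j}=0$ for $i\neq j$, and the reduction to showing that a local component with residue field $\C_{K}$ contributes signature zero is exactly the right pivot. Your key device --- lifting ${\rm i}$ to an element $j\in A_{i}$ with $j^{2}=-1$ (Hensel in the Artinian local ring $A_{i}$, where the maximal ideal is nilpotent and $X^{2}+1$ has simple roots in $\C_{K}$ since ${\rm Char}\,K=0$), and then using the anti-isometry $\Phi_{\alpha_{i}}(jf,jg)=-\Phi_{\alpha_{i}}(f,g)$ together with Sylvester's Law of Inertia\,\ref{thm:3.4} to force $p_{i}=q_{i}$ --- is sound, and it is a clean generalization of the computation in Example\,\ref{ex:4.6} (where $\type\,\tr_{K}^{\C_{K}}=(1,1)$) from the trace form on the field $\C_{K}$ to an arbitrary generalized trace form on a local algebra with residue field $\C_{K}$. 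The one point worth flagging is cosmetic rather than substantive: you do not actually need the full strength of Hensel or Cohen; since $\mathfrak{m}_{i}$ is nilpotent, a finite Newton iteration starting from any lift of ${\rm i}$ already produces $j$, so the ``main obstacle'' you anticipate is harmless. The argument as written proves the theorem.
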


%}

\section{Counting rational points of 0-dimensional affine algebraic sets}\label{sec:Sec5}

In this section we will apply results from Section\,4 on trace forms to count the rational points  of finite affine algebraic sets over  real closed fields. Our method is a modern version of old results of Hermite and Sylvester who had used signatures of quadratic forms to count real zeros of polynomials in one variable, see\,\cite{Hermite1852}, \cite{Hermite1852a} and \cite{Sylvester1853}. We use elementary commutative  algebra  to treat multivariate versions of these problems.
\smallskip

\begin{mypar}\label{mypar:5.1}{\sc Notation, Assumptions and Consequences}\, Throughout this section, we use the following notation and assumptions and their consequences\,:
\smallskip

Let $K$ be a real closed field with notation as in \ref{mypar:3.1}.
For an ideal  $\mathfrak{A}\subseteq K[X_{1},\ldots , X_{n}]$ in the polynomial ring
$K[X_{1},\ldots , X_{n}]$ over $K$,  let \\[1mm]
\hspace*{30mm} ${\rm V}_{K}(\mathfrak{A}):=\{a\in K^{n}\mid F(a)=0 \ \hbox{ for all } \ F\in\mathfrak{A}\}$\,
and  \\[1mm]
\hspace*{30mm} $\,{\rm V}_{\K}(\mathfrak{A}):=\{a\in \K^{n}\mid F(a)=0 \ \hbox{ for all } \ F\in\mathfrak{A}\}$.  \\[1mm]
be the affine algebraic set in $K^{n}$ and in $\K^{n}$ defined by $\mathfrak{A}$, respectively.
\smallskip

Polynomials in $K[X_{1},\ldots , X_{n}]$ are denoted by capital letters $F$, $G$, $H$, $\ldots$ and their images in the residue class $K$-algebra $A:=K[X_{1},\ldots , X_{n}]/\mathfrak{A}$ are denoted by small letters $f$, $g$, $h$, $\ldots$\,.
\smallskip

Every  element $f\in A$ defines a (regular or polynomial) function on ${\rm V}_{K}(\mathfrak{A})$, namely  $\,f:{\rm V}_{K}(\mathfrak{A}) \longrightarrow K$, $a\longmapsto f(a)$. Further, if   $f$, $g\in A$, then, clearly\,: \\[1mm]
\hspace*{9mm} $f=g$ on ${\rm V}_{K}(\mathfrak{A})\iff f=g$ in $A \iff \, F \equiv G~{\rm (\,mod\,} \mathfrak{A}\,)$, i.\,e. $F-G\in\mathfrak{A}$.
\medskip

We assume that the residue class $K$-algebra $\,A:=K[X_{1},\ldots , X_{n}]/\mathfrak{A}$ is finite dimensional $K$-vector space, or equivalently,  the affine algebraic set  ${\rm V}_{K}(\mathfrak{A})\subseteq K^{n}$ is a finite set.
These assumptions are equivalent with the conditions\,:    the $\K$-algebra $\K\otimes_{K}A= A_{\K} = \K[X_{1},\ldots , X_{n}]/\langle \mathfrak{A}\rangle $ is finite dimensional over $\K$, or  equivalently,  the affine algebraic subset  ${\rm V}_{\K}(\mathfrak{A}) \subseteq \K^{n}$  is a finite set.
\smallskip

Further, since $\mathfrak{A} \subseteq K[X_{1},\ldots , X_{n}]$, it follows that if ${\bf a}\!\in\! {\rm V}_{\K}(\mathfrak{A})$, then its conjuga\-te $\overline{{\bf a}}\!\in\! {\rm V}_{\K}(\mathfrak{A})$, too. Therefore, since ${\rm V}_{K}(\mathfrak{A}) \subseteq {\rm V}_{\K}(\mathfrak{A})$, \hbox{renumbering we assume that\,:}
\smallskip

{\bf 5.1.a\,}\, ${\rm V}_{K}(\mathfrak{A})\!=\!\{{\bf a}_{1},\ldots , {\bf a}_{r}\}\!\subseteq\!
{\rm V}_{\K}(\mathfrak{A})\!=\!\{{\bf a}_{1},\ldots , {\bf a}_{r}\,,\, {\bf a}_{r+1}\,,\, \overline{{\bf a}}_{r+1}\,,\, \ldots\,,\, {\bf a}_{r+s}\,,\, \overline{{\bf a}}_{r+s}\}$, \\[1mm]
where  $\,r:=\#{\rm V}_{K}(\mathfrak{A})$, $r+s=\#\Spm A$ \hspace*{1mm} and
 \hspace*{1mm} $m:=r+2s=\Dim_{K}A=\Dim_{\K} A_{\K} =\#{\rm V}_{\K}(\mathfrak{A})$. \\[1mm]
Furthermore, since $K$ is a real closed field, ${\rm Char}\, K=0$, in~particular, $K$ is infinite  and hence by a linear change of coordinates (over $K$) (for instance, $Y_{i}=X_{i}$ for all $i=1,\ldots , n-1$ and $Y_{n}= X_{n}+\sum_{\,i=1}^{n-1}\, X_{i}\, t^{i}$ for suitable $t\in K$ avoiding finitely many), we may assume that ${\rm V}_{\K}(\mathfrak{A})$ is in \so{general $X_{n}$-position}, or the ideal $\mathfrak{A}$ in \so{general $X_{n}$-position}\, (The intention is to separate all zeros in an algebraic closure of $K$ by their last coordinate),\,  i.\,e.\,: \\[2mm]
{\bf 5.1.b\,}\, The $n$-th coordinates $a_{i\,n}$ of the points ${\bf a}_{i}\!=\! (a_{i1}, \ldots , a_{i\,n})\in \K^{n}\!\!$, $i\!=\!1,\ldots , m$ are all distinct. \\[1.5mm]
Note that ${\rm V}_{K}(\mathfrak{A})\!=\!{\rm V}_{\K}(\mathfrak{A})\cap K^{n}$ is the set of $K$-rational points of ${\rm V}_{\K}(\mathfrak{A})\!\iso\!\KSpec A_{\K}= \Spm A_{\K} =
\Spec A_{\K}$ (the first equality follows from Hilbert's Nullstellensatz, see\,\cite{Kunz1980} or  \cite[Theorem\,2.10, HNS\,3]{GPV2018}\,) and
${\rm V}_{K}(\mathfrak{A}) \iso \KSpec A\subseteq \Spm A\!=\!\Spec A$,  see\,\ref{mypar:4.1}\,(b). Further, since $A$ and $A_{\K}$ are reduced, the local components (see\,\ref{mypar:4.1}\,(c)) of  $A$ corresponding to the $K$-rational points ${\bf a}_{i}\!\in \!{\rm V}_{K}(\mathfrak{A})$, $i=1,\ldots , r$,  are isomorphic to $K$ and  corresponding to $\mscrM\in\Spm A \!\smallsetminus\!\KSpec A$    are  isomorphic to $\K$, but local components of  $A_{\K}$ corresponding to all the points ${\bf a}\in {\rm V}_{\K}(\mathfrak{A})$ are all isomorphic to $\K$. Therefore the explicit structures of the $K$-algebra $A$ and the $\K$-algebra $A_{\K}$ are determined by the algebra  isomorphisms which are defined by the substitutions\,: \\[1.5mm]
{\bf 5.1.c\,}\, \hspace*{2mm} $\,A \iso\enskip  K^{r}\times \K^{s}\,$, $h\mapsto \left(h\,(\,{\rm mod}\,\mscrM\,)\right)_{\mscrM\in \Spm A}$,  where $r$,  $s$ as in {\bf 5.1.a} \  and \ \\[1mm]
\hspace*{12mm} $\,A_{\K} \iso \enskip \K^{m}$, $f\mapsto \left(f({\bf a})\right)_{{\bf a}\in {\rm V}_{\K}(\mathfrak{A})}\,$,  where  $m:=r+2s$. \\[1.5mm]
Note that $m=\Dim_{K}A=\Dim_{\K} A_{\K} =\#{\rm V}_{\K}(\mathfrak{A})$.    \\[1mm]
Furthermore,   the following eigenvector theorem (see \cite[Ch.\,2, \S4, Theorem\,4.5]{CLO2005} which follows directly from 5.1.b\,:  \\[1.5mm]
{\bf 5.1.d\,}
For every $h\!\in\!A$,
the  eigenvalues of the $K$-linear map $\lambda_{h}:A\to A$, $f\mapsto h f$ are the values $h({\bf a}_{1}), \ldots , h({\bf a}_{r})$,  $\,h({\bf a}_{r+1}), h(\overline{{\bf a}}_{r+1})
\ldots , h({\bf a}_{r+s}), h(\overline{{\bf a}}_{r+s})$ of the function $h:{\rm V}_{\K}(\mathfrak{A}) \rightarrow \K$.\\[2mm]
For more accessible determination of the signature of the trace form $\, \tr^{A}_{K}$, we need a nice basis of $A$ over $K$. The following crucial key observation so-called Shape Lemma (see  \cite{CLO2005},  \cite{GianniMora1989} and \cite{KR2000})
guarantees a distinguished generating set  for a radical ideal $\mathfrak{A}$ in $K[X_{1},\ldots , X_{n}]$.
We give a  proof of the Shape Lemma by using the natural action of the Galois group $\Gal (\overline{K}\vert K)$ on
${\rm V}_{\overline{K}}(\mathfrak{A})$.
\end{mypar}
\smallskip

\begin{lemma}\label{lem:5.2}{\rm  (\,\so{Shape Lemma}\,)}\,
Let $K$ be an infinite perfect field and $\mathfrak{A}\subseteq K[X_{1},\ldots , X_{n}]$ be a radical ideal  and let   $A:= K[X_{1},\ldots , X_{n}]/\mathfrak{A}$ be  a finite dimensional $K$-vector space. With further  notation and assumptions as in {\rm \ref{mypar:5.1}}. There exist polynomials $g_{1},\ldots , g_{n-1},\, g_{n} \in K[T]\, ($where $T$ is  indeterminate over $K)$  with $g_{n}\neq 0$ square free of degree $m$, such that $\,\mathfrak{A}$ is generated by
$X_{1}-g_{1}(X_{n})\,, \ldots ,\, X_{n-1}-g_{n-1}(X_{n})$,  $g_{n}(X_{n})\,$.
In~particular,    $\,\mbcalx= \{1, x_{n}, \ldots , x_{n}^{m-1}\}$ is a $K$-basis of $A$, where $x_{n}$ is the image of $X_{n}$ in $A$.
\end{lemma}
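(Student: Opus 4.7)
Write $\mathrm{V}_{\overline{K}}(\mathfrak{A}) = \{\mathbf{a}_1,\ldots,\mathbf{a}_m\}$ with $\mathbf{a}_i = (a_{i1},\ldots,a_{in})$, where, by assumption 5.1.b, the last coordinates $a_{1n},\ldots,a_{mn}$ are pairwise distinct. The plan is first to produce the polynomials $g_1,\ldots,g_n \in \overline{K}[T]$ by straightforward interpolation, then to show they descend to $K[T]$ using the Galois action, and finally to verify that the ideal they generate equals $\mathfrak{A}$ via Hilbert's Nullstellensatz.

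\textbf{Step 1: Construction of the $g_i$.} Set $g_n(T) := \prod_{i=1}^m (T - a_{in}) \in \overline{K}[T]$; it is square-free of degree $m$ by 5.1.b. For each $j = 1,\ldots,n-1$, since the nodes $a_{1n},\ldots,a_{mn}$ are distinct, Lagrange interpolation gives a unique polynomial $g_j \in \overline{K}[T]$ of degree $<m$ with $g_j(a_{in}) = a_{ij}$ for every $i$.

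\textbf{Step 2: Descent to $K[T]$.} Because $K$ is perfect, $\overline{K}/K$ is Galois; let $G := \mathrm{Gal}(\overline{K}\,\vert\,K)$. Since $\mathfrak{A}$ is generated by polynomials with coefficients in $K$, the set $\mathrm{V}_{\overline{K}}(\mathfrak{A})$ is $G$-stable, and the action of any $\sigma \in G$ permutes the points $\mathbf{a}_i$, hence permutes the pairs $(a_{in}, a_{ij})$ for each fixed $j$. If $g_j^{\sigma}$ denotes the polynomial obtained by applying $\sigma$ coefficient-wise to $g_j$, then $g_j^{\sigma}$ interpolates the same data (merely reindexed), so by uniqueness $g_j^{\sigma} = g_j$; the same argument with symmetric functions gives $g_n^{\sigma} = g_n$. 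Thus $g_1,\ldots,g_n \in K[T]$ by the fundamental theorem of Galois theory. This is the main delicate point of the proof, and it is where both the perfectness of $K$ (to ensure that $\overline{K}/K$ is Galois with fixed field $K$) and the general $X_n$-position hypothesis (to ensure Lagrange interpolation applies) are really used.

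\textbf{Step 3: Identifying the ideal.} Let $\mathfrak{B} := \langle X_1 - g_1(X_n),\ldots,X_{n-1} - g_{n-1}(X_n),\, g_n(X_n)\rangle \subseteq K[X_1,\ldots,X_n]$. By the choice of $g_1,\ldots,g_n$, every point $\mathbf{a}_i$ satisfies the generators of $\mathfrak{B}$, so $\mathrm{V}_{\overline{K}}(\mathfrak{A}) \subseteq \mathrm{V}_{\overline{K}}(\mathfrak{B})$. Conversely, any solution of the generators of $\mathfrak{B}$ is determined by its $n$-th coordinate, which must be a root of $g_n$; thus $\#\mathrm{V}_{\overline{K}}(\mathfrak{B}) \le \deg g_n = m = \#\mathrm{V}_{\overline{K}}(\mathfrak{A})$, and so the two varieties coincide. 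Using the map $X_j \mapsto g_j(X_n)$ ($j<n$) we obtain a $K$-algebra isomorphism $K[X_1,\ldots,X_n]/\mathfrak{B} \cong K[X_n]/\langle g_n(X_n)\rangle$; since $g_n$ is square-free, the latter ring is a product of fields and in particular reduced, so $\mathfrak{B}$ is a radical ideal. Because $\mathfrak{A}$ is radical by hypothesis and both ideals have the same zero set, Hilbert's Nullstellensatz yields $\mathfrak{A} = \mathfrak{B}$.

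\textbf{Step 4: The basis.} From the isomorphism $A = K[X_1,\ldots,X_n]/\mathfrak{A} \cong K[X_n]/\langle g_n(X_n)\rangle$ and $\deg g_n = m$, the residue classes $1, x_n, \ldots, x_n^{m-1}$ form a $K$-basis of $A$, completing the proof.
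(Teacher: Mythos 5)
Your proof is correct and follows essentially the same route as the paper's: Lagrange interpolation to produce $g_{1},\ldots ,g_{n-1}$, Galois invariance to descend them to $K[T]$, and radicality of the shape ideal together with Hilbert's Nullstellensatz to conclude $\mathfrak{A}=\mathfrak{B}$. Your variant of interpolating over all $m$ nodes at once and invoking uniqueness of the degree~$<m$ interpolant for the Galois descent is, if anything, a slightly cleaner packaging than the paper's orbit-by-orbit construction.
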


\begin{proof}
Let $\overline{K}$ be an algebraic closure of $K$. Since $K$ is perfect, the field extension  $\overline{K}\vert K$ is a Galois extension. Let $\Gal (\overline{K}\vert K)$ be its Galois group.  \\[0.5mm]
Let ${\rm V}_{\overline{K}}(\mathfrak{A})\!:=\!\{a\in \overline{K}^{n}\!\mid F(a)\!=0\!\hbox{ for all } F\!\in\!\mathfrak{A} \}$. Then ${\rm V}_{\overline{K}}(\mathfrak{A})$ is a finite set by assum\-p\-tion on $\mathfrak{A}$ and the projection map $q:{\rm V}_{\overline{K}}(\mathfrak{A}) \rightarrow \overline{K}$, $(a_{1},\ldots , a_{n})\mapsto a_{n}\,$ is injective (by assumption (see 5.1.b)), (i.\,e. $q\,$ separates points in ${\rm V}_{\overline{K}}(\mathfrak{A})$). \\[0.75mm]
The Galois group ${\rm Gal}\,(\overline{K}\vert K)$ operates  on ${\rm V}_{\overline{K}}(\mathfrak{A})$ with the natural operation\,: \\[0.75mm]
\hspace*{8mm} $\,{\rm Gal}\,(\overline{K}\vert K) \times {\rm V}_{\overline{K}}(\mathfrak{A}) \longrightarrow {\rm V}_{\overline{K}}(\mathfrak{A})$,
$(\sigma, (a_{1},\ldots , a_{n}))\longmapsto (\sigma(a_{1}),\ldots , \sigma(a_{n}))$. \\[0.75mm]
Obviously, the image $q({\rm V}_{\overline{K}}(\mathfrak{A}))\!=\!W_{1}\uplus \cdots \uplus W_{\ell}$ is the union of  orbits of this operation and each orbit $W_{k}\!=\!{\rm V}_{\overline{K}}(\pi_{k})$ is the zero set of the irreducible polynomial $\pi_{k}\!\in\! K[T]$, $k\!=\!1,\ldots , \ell$,\, see\,\cite{Jacobson1989}\,  or\,  \cite[Ch.\,XI, \S93, 93.2]{SchejaStorch1988}.   Therefore, since $K$ is perfect, the polynomial $g _{n}:=\pi_{1}\cdots \pi_{\ell}\in K[T]$  is square free and   $q({\rm V}_{\overline{K}}(\mathfrak{A}))\!=\!{\rm V}_{\overline{K}}(g_{n})$, $\deg\, g _{n}\!=\!\# {\rm V}_{\overline{K}}(\mscrM)\!=\!m$. \\[1.5mm]
{\bf 5.2.a}\, For all $\,a_{n}\in  q({\rm V}_{\overline{K}}(\mathfrak{A}))$, there exist polynomials $\,g_{1},\ldots , g_{n-1}\in K[T]$ with   $\deg g_{i}<\deg g_{n}=m$ such that $(g_{1}(a_{n}), \ldots , g_{n-1}(a_{n}), a_{n})$ is the unique point lying over $a_{n}$.\\[1.5mm]
To prove 5.2.a, let  $a_{n}\in q({\rm V}_{\overline{K}}(\mathfrak{A}))$ and  $(a_{1}, \ldots , a_{n-1},\, a_{n})$ be the unique point lying over $a_{n}$.  We may assume that $a_{n}\in W_{1}=\{\sigma_{j}(a_{n}) \mid j=1, \ldots , d_{1},\, \sigma_{1}=\id_{\overline{K}}\}$ with $d_{1}=\#\,W_{1}$.  Let $W'_{i}$ denote the orbit of $a_{i}$. Then, since $q$ is injective, $\#\, W'_{i}\leq \#\,W_{1}=d_{1}$. Moreover,   for all $i=1,\ldots , n-1$,
$W'_{i}=\{\sigma_{j}(a_{i}) \mid j=1, \ldots , d_{1},\, \sigma_{1}=\id_{\overline{K}}\}$, but all $\sigma_{j}(a_{i})$, $j=1,\ldots , d_{1}$, may not be distinct.
\smallskip

Now, since $\sigma_{j}(a_{n})$, $j\!=\!1,\ldots , d_{1}$, are distinct elements in $\overline{K}$,  by \textit{Lagrange's Interpolation For\-mula}\,\footnote{\label{foot:8}\,Although named after Lagrange,\,J.\,L.\,(1736\,-\,1813) who published it in\,1795, the method was first discovered in\,1779 by Waring,\,E.\,(1734\,-\,1798).\,It is also an easy consequence of a formula of Euler,\,L.\,(1707\,-\,1783) published in\,1783. \, \
{\bf Lagrange's Interpolation Formula\,:} {\it Let $K$ be a field and let $x_{1},\ldots , x_{n}\in K$ be distinct elements. Then for arbitrary elements $y_{1},\ldots , y_{n}\in K$, there exists a polynomial $g\in K[X]$ of degree $\deg\,g<n$ such that $g(x_{i}) =y_{i}\,$ for every $i=1,\ldots , n$.} For a \so{proof}  consider the polynomial $\, \displaystyle g:=\sum_{i=1}^{n}\,\,\frac{y_{i}}{z_{i}}\,\prod_{j\neq i}\, \, (X-x_{j})$, where $\,\displaystyle  z_{i}:= \prod_{j\neq i}\, \,(x_{i}-x_{j})$.
}, for each $i\!=\!1,\ldots , n-1$, there exists a polynomial $g_{i}\!\in\!\overline{K}[X]$, $\deg\,g_{i}\! < \!d_{1}\!<\! \deg\,g_{n}$,  such that $g_{i}(\sigma_{j}(a_{n}))\!=\!\sigma_{j}(a_{i})$ for all $j\!=\!1,\ldots , d_{1}\!$.  Moreover, $g_{1}, \ldots , g_{n-1}\!\in\!K[X]$.
\smallskip

Finally we claim the equality $\mathfrak{A}':=\langle X_{1}-g_{1}(X_{n}), \ldots , X_{n-1}-g_{n-1}(X_{n}),\, g_{n}(X_{n})\rangle =\mathfrak{A}$. To prove this first  note that
the substitution homomorphism $K[X_{1},\ldots , X_{n-1}, X_{n}]\rightarrow K[X_{n}]$, $X_{i}\mapsto X_{i}-g_{i}(X_{n})$, $i=1,\ldots , n-1$ and $X_{n}\mapsto g_{n}(X_{n})$, induces a $K$-algebra isomorphism
 $K[X_{1}, \ldots , X_{n}]/\mathfrak{A}' \iso K[X_{n}]/\langle g\rangle$ and  $K[X_{1}, \ldots , X_{n}]/\mathfrak{A}' $  is reduced, since $g_{n}$ is separable over $K$. Therefore   $\mathfrak{A}'$ is a radical ideal. Further, from 5.2.a it follows that ${\rm V}_{\overline{K}}(\mathfrak{A}') ={\rm V}_{\overline{K}}(\mathfrak{A})$. Now, use Hilbert's Nullstellensatz (see\,\cite{AM}, \cite{Kunz1980} or  \cite[Theorem\,2.10\,,\,HNS\,2]{PatilStorch2010}) to conclude the equality $\mathfrak{A}'=\mathfrak{A}$.\qed
\end{proof}
\smallskip

\begin{remark}\label{rem:5.3}
The Shape Lemma\,\ref{lem:5.2} appeared first time in \cite{GianniMora1989} which  may be regarded as a natural generalization of the Primitive Element Theorem.  Further, it gives a very useful presentation of the radical ideal $\mathfrak{A}$ which allows to find the solution space ${\rm V}_{\overline{K}}(\mathfrak{A})$ immediately, namely\,: \\[1mm] \hspace*{20mm} $\,{\rm V}_{\overline{K}}(\mathfrak{A}) = \{(g_{1}(a), \ldots , g_{n-1}(a), a)\in \overline{K}^{n} \mid g_{n}(a) =0\,\}$. \\[1mm]
In other words the last coordinates are zeros of $g_{n}$ and for a fixed last coordinate $a_n$, all the other coordinates are determined by evaluation of polynomials $g_{n-1}, \ldots , g_{1}$ at $a_n$\,: \, $\,g_{n}(a_n) = 0$, $\,a_{n-1} = g_{n-1}(a_n),\, \ldots \, , a_1 = g_1(a_n)$. This simple shape of the solution space $\,{\rm V}_{\overline{K}}(\mathfrak{A})$ is quite convenient to work with.
The primary decomposition of $\mathfrak{A}$ is given by the prime factorization of the polynomial $g_{n}$.
Under the conditions on the polynomials $g_{1}, \ldots , g_{n-1}$, $g_{n}\in K[X]$ as in the proof of the Shape Lemma\,\ref{lem:5.2}, one can easily verify that $\,X_{1}-g_{1}(X_{n}), \ldots , X_{n-1}-g_{n-1}(X_{n}),\, g_{n}(X_{n})\,$
form a reduced (=\,minimal) Gr\"obner basis of the radical ideal $\,\mathfrak{A}\,$ relative to the lexicographic order $X_{1}>X_{2}>\cdots >X_{n}$. For a different proof of the Shape Lemma~\ref{lem:5.2} see\,\cite[Theorem\,3.7.25]{KR2000} and a detailed recipe for solving systems of polynomial equations efficiently using the Shape Lemma\,\ref{lem:5.2} is also given in \cite[Theorem\,3.7.26]{KR2000}. The Shape Lemma\,\ref{lem:5.2} also appeared in \cite[Ex.\,16, \S\,4, Ch.\,2]{CLO2005}.
\end{remark}
\vskip2pt

\begin{mypar}\label{mypar:5.4}{\sc Consequence and identifcation}\,
Let $K$ be a real closed field,  $\K:=\C_{K}=K[\,{\rm i}\,]$, ${\rm i}^{2}=-1$, the algebraic closure of $K$  {\rm (see\,\ref{mypar:3.1})} and let $\mathfrak{A}\subseteq K[X_{1},\ldots , X_{n}]$
a  radical ideal. Suppose  that  $A:=K[X_{1},\ldots , X_{n}]/\mathfrak{A}$ is a finite dimensional $K$-vector space.  \smallskip

Let $g_{1}, \ldots , g_{n-1}, g:=g_{n}(X)\in K[X]$ are the polynomials as in the statement of the  Shape Lemma\,{\rm \ref{lem:5.2}} and let $\varphi : A \iso K[X]/\langle g\rangle$ be the $K$-algebra isomorphism
as in the proof of   the  Shape Lemma\,{\rm \ref{lem:5.2}}. Then, since $g$ is square-free and $K$ is a real closed field (see Footnote\,\ref{foot:2}),   $g = (X-a_{1})\cdots (X-a_{r}) \pi_{1} \cdots \pi_{s}\,$,  $a_{i}\in K$, $i=1,\ldots r\,$  and $\pi_{j}=(X-z_{j})(X-\overline{z}_{j})\in K[X]$, $z_{j}\!\in\!\K\!\smallsetminus K$, $j\!=\!1,\ldots , s$, where  $r$, $s$ and $m\!=\!r\!+\!2s$ as in {\rm \ref{mypar:5.1}.a}, since $\varphi$ is a $K$-algebra iso\-morphism.
\smallskip

We use  the above $K$-algebra isomorphism $\varphi$ to identify  $\mathfrak{A}$ and $A$   with  $\langle g\rangle$ and $K[X]/\langle g\rangle$, respectively.  With this $\mathbcal{x}:=\{1, x, \ldots , x^{\,m-1}\}$ is a $K$-basis of  $A$, where $x$ is the image of $X$ in $A$ and
${\rm V}_{K}(\mathfrak{A}) = {\rm V}_{K}(g)=\{ a_{1},\ldots ,  a_{r}\}\subseteq
{\rm V}_{\K}(\mathfrak{A}) = {\rm V}_{\K}(g)=\{ a_{1},\ldots , a_{r}, z_{1}, \overline{z}_{1}, \ldots , z_{s}, \overline{z}_{s}\}$,  $r+2s =m$.
\smallskip

Further,  for $H\in K[X_{1}, \ldots , X_{n}]$, $H\neq 0$,  we put
$h(X):= H(g_{1}(X), \ldots , g_{n-1}(X), X)\in K[X]$.  Then using the above  identifications, we have
 $h(x)\in A$,  and the values $H({\bf a}_{i}) \in K$, $i=1,\ldots , r$, and $H({\bf a}_{r+j})$, $H(\overline{\bf a}_{r+j})\in \K$,
$j=1,\ldots , s$ are identified with the values  $h(a_{i}) \in K$,  $i=1,\ldots , r$, and  $h(z_{j})$,  $h(\overline{z}_{j})\in \K$, $j=1,\ldots , s$, respectively.

\end{mypar}
\smallskip

\begin{theorem}\label{thm:5.5}
With the notation   as in {\rm \ref{mypar:5.1}},   in {\rm \ref{mypar:5.4}},  let
$H\in K[X_{1}, \ldots , X_{n}]$, $H\neq 0$, $h$ be the image of $H$ in $A$ and let
$\,\Phi_{h}:A \times A \to K$, $(f,f')\mapsto \tr_{K}^{A}(hff')$, be the generalized trace form associated with $h\in A$. Then\,$:$
\smallskip

\begin{alist}

\item
The  Gram's matrix $\mscrG_{\Phi_{h}}(\mbcalx)$ of $\,\Phi_{h}$ with respect to the  $K$-basis $\mbcalx$ is a symmetric matrix in ${\rm M}_{m}(K)$. Moreover, $\,\mscrG_{\Phi_{h}}(\mbcalx)\!=\!\mscrV\,\mscrD\, {}^{\rm t}\mscrV$, where
$\mscrV\in \GL_{m}(\K)$ is the Vandermonde's matrix\,\footnote{\label{foot:9}\,{\bf Vandermonde's matrix}\, For elements $a_{1}, \ldots , a_{m}$ is a field $K$,  the matrix  $\mscrV(a_{1},\ldots , a_{m}):=(a_{i}^{\,j})_{\genfrac{}{}{0pt}{5}{\hspace*{-2.5mm}1\leq i\leq m}{0\leq j \leq m-1}}\in\M_{m}(K)$ is called the \so{Vanderminde's matrix of the elements} $a_{1}, \ldots , a_{m}$. The elements $a_{1}, \ldots , a_{m}$ are pairwise  distinct if and only if $\mscrV(a_{1},\ldots , a_{m})\in\GL_{m}(K)$.
} of the elements $a_{1},\ldots , a_{r}, z_{1}, \ldots , z_{s}, \overline{z}_{1}, \ldots , \overline{z}_{s}\in\K$\,  and\, $\mscrD\in \M_{m}(\K)$ is the diagonal matrix with diagonal entries $\, h( a_{1}), \ldots , h(a_{r}),  h(z_{1}), \ldots , h(z_{s}),  h(\overline{z}_{1}), \ldots , h(\overline{z}_{s})$.

\item
Let
$\,p_{H}:= \#\,\{{\bf a}\in {\rm V}_{K}(\mathfrak{A}) \mid H({\bf a})>0\,\}\,$ and   $\,q_{H}:= \#\,\{{\bf a}\in {\rm V}_{K}(\mathfrak{A}) \mid H({\bf a})<0\,\}\,$.
Then $\,\type\,\Phi_{h}\!=\! (p_{H}+s\,,\, q_{H}+s)$, where $s\!=\! \#\left({\rm V}_{\K}(\mathfrak{A})\!\smallsetminus\!{\rm V}_{K}(\mathfrak{A})\right)$ and $\rank \Phi_{h}\!=\,  \#\{{\bf a}\in {\rm V}_{\K}(\mathfrak{A}) \mid H({\bf a})\neq 0\}$. In~particular, $\,\sign\,\Phi_{h}= p_{H} - q_{H}\,$.
\end{alist}
\end{theorem}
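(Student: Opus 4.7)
The plan is to exploit the Shape Lemma identification $A = K[X]/\langle g\rangle$ with the monomial $K$-basis $\mbcalx = \{1, x, \ldots, x^{m-1}\}$ and the explicit knowledge of the roots of $g$. The main computational tool is the formula $\tr_{K}^{A}(f) = \sum_{\alpha} f(\alpha)$, valid for any $f\in A$, where $\alpha$ runs over the $m$ distinct roots $a_{1},\ldots,a_{r},z_{1},\overline{z}_{1},\ldots,z_{s},\overline{z}_{s}$ of $g$ in $\K$. This comes from base-changing to $\K$, applying the Chinese Remainder Theorem to write $A_{\K} \iso \prod_{\alpha}\K$ via $f\mapsto (f(\alpha))_{\alpha}$, and observing that trace is preserved under base change.

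For part (a), I substitute $f = h\,x^{i+j}$ into the trace formula to obtain
\[
\mscrG_{\Phi_{h}}(\mbcalx)_{ij} \;=\; \tr_{K}^{A}(h\,x^{i+j}) \;=\; \sum_{\alpha} h(\alpha)\,\alpha^{i}\alpha^{j}.
\]
Symmetry in $(i,j)$ is immediate, and $K$-rationality of the entries follows because conjugate pairs contribute mutually conjugate summands. Factoring out the Vandermonde structure gives
$\mscrG_{\Phi_{h}}(\mbcalx) = {}^{\rm t}\mscrV\,\mscrD\,\mscrV$ (equivalent to the form stated, up to convention for $\mscrV$), with $\mscrV \in \GL_{m}(\K)$ because the roots are pairwise distinct by the Shape Lemma, and $\mscrD = \Diag(h(\alpha))_\alpha$.

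For part (b), I use the orthogonal decomposition of the trace form induced by the local component splitting $A \iso K^{r}\times \K_{1}\times\cdots\times \K_{s}$ of {\rm \ref{mypar:5.1}.c} together with \ref{mypar:4.2}, writing $\Phi_{h} = \operp_{i=1}^{r}\Phi_{h}^{(i)} \operp \operp_{j=1}^{s}\Phi_{h}^{(j)}$. Each one-dimensional real factor $\Phi_{h}^{(i)}$ is the form $(c,c')\mapsto h(a_{i})\,cc'$ on $K$, contributing type $(1,0)$, $(0,1)$, or $(0,0)$ according to the sign of $h(a_{i}) = H({\bf a}_{i})$; these contributions assemble to $(p_{H}, q_{H})$. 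Each two-dimensional complex factor $\K_{j} = K[X]/\langle\pi_{j}\rangle$ is handled by Example \ref{ex:3.10}: the $2\times 2$ Gram matrix has determinant $h(z_{j})h(\overline{z}_{j})(z_{j}-\overline{z}_{j})^{2} = -4\,|h(z_{j})|^{2}({\rm Im}\,z_{j})^{2}$, which is strictly negative when $h(z_{j})\neq 0$ (forcing type $(1,1)$ by the table in Example \ref{ex:3.9}) and zero otherwise (forcing the zero form, type $(0,0)$). Summing across $j$ yields the complex contribution $(s^{\ast}, s^{\ast})$, where $s^{\ast}$ counts conjugate pairs with $h(z_{j})\neq 0$; this is the ``$s$'' of the statement, and the rank formula $p_{H}+q_{H}+2s^{\ast} = \#\{{\bf a}\in{\rm V}_{\K}(\mathfrak{A})\mid H({\bf a})\neq 0\}$ follows likewise. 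Finally, $\sign\Phi_{h} = (p_{H}+s^{\ast})-(q_{H}+s^{\ast}) = p_{H}-q_{H}$.

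The main obstacle, or rather the crucial subtlety, is the analysis of the two-dimensional complex pieces. The fact that each such piece \emph{always} contributes a type $(1,1)$ of fixed signature zero (when $h$ is nonvanishing there) is what makes the signature formula cleanly isolate the $K$-rational points weighted by the sign of $H$. This balance between the two indices in each complex pair is exactly the invariance property underlying Theorem \ref{thm:4.5} and its refinement \ref{thm:4.10}, now realized concretely through Example \ref{ex:3.10}.
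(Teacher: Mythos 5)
Your proposal is correct and follows essentially the same route as the paper's own proof: for (a) the trace-as-sum-over-the-roots-of-$g$ formula yielding the Vandermonde factorization $\mscrV\,\mscrD\,{}^{\rm t}\mscrV$, and for (b) the orthogonal decomposition of $\Phi_{h}$ along the local components $A\iso K^{r}\times\K^{s}$ with the one-dimensional real pieces contributing $(p_{H},q_{H})$ and the two-dimensional non-real pieces handled via Example\,\ref{ex:3.10}. The one point where you are more careful than the paper is in separating out the conjugate pairs at which $h$ vanishes (these contribute type $(0,0)$ rather than $(1,1)$), which is the correct reading of the ``$s$'' in the stated type formula when $H$ happens to vanish at a non-real point of ${\rm V}_{\K}(\mathfrak{A})$, and which is what makes your type count consistent with the rank formula.
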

\begin{proof}
Recall from \ref{mypar:5.1}  that\,: \\[0.5mm]
$\,{\rm V}_{K}(\mathfrak{A})=\{{\bf a}_{1},\ldots , {\bf a}_{r}\}\subseteq
{\rm V}_{\K}(\mathfrak{A})=\{{\bf a}_{1},\ldots , {\bf a}_{r}\,,\, {\bf a}_{r+1}\,,\, \overline{{\bf a}}_{r+1}\,,\, \ldots\,,\, {\bf a}_{r+s}\,,\, \overline{{\bf a}}_{r+s}\}$,
where $r:=\#\,{\rm V}_{K}(\mathfrak{A})$, $r+s =\#\,\Spm\, A$   and $m= r+2s=\Dim_{K} A= \Dim_{\K} A_{\K} =\#\,{\rm V}_{\K}(\mathfrak{A})$ and that
${\rm V}_{\K}(\mathfrak{A})$ is in general $X_{n}$-position, see 5.1.a and 5.1.b.
\smallskip

{\bf (a)}\, From the  indentifcations in \,\ref{mypar:5.4}, it follows that
for $1\leq k\,,\,\ell\leq m-1$, the $(k,\ell)$-entry in the Gram's matrix $\mscrG_{\Phi_{h}}(1,x, \ldots , x^{m-1})$ is\,: \\[1.5mm]
{\bf \ref{thm:5.5}.1}\,\hspace*{3.5mm}  $\displaystyle \tr_{K}^{A}(h(x)\,x^{k+\ell-2})
=\sum_{z\in {\rm V}_{\K}(g)}\!\! h(z)\, z^{\,k+\ell-2}\,$ \\[-0.75mm]
\hspace*{35mm} $\displaystyle
\,= \, \sum_{i=1}^{r}\! h(a_{i})\, a_{i}^{\,k+\ell-2}
+\sum_{j=1}^{s}\! \left(h(z_{j})\, z_{j}^{\,k+\ell-2}+h(\overline{z}_{j})\, \overline{z}_{j}^{\,k+\ell-2}\right)$.\\[0.75mm]
Now, by the Fundamental Theorem on Symmetric Polynomials (see\,\cite[Theorem\,54.13]{SchejaStorch1988},  the right hand side of \ref{thm:5.5}.1 is a polynomial in the coefficients of $h(X)$ and $g(X)$ (with coefficients in $\Z$) and hence belongs to $K$. Therefore
$\,\mscrG_{\Phi_{h}}(1,x, \ldots , x^{m-1})\,$ is  a symmetric matrix  in ${\rm M}_{m}(K)$.
Furthermore,  using the equation \ref{thm:5.5}.1,
the  equality
$\,\mscrG_{\Phi_{h}}(1,x, \ldots , x^{m-1}) =  \mscrV\,\mscrD {}^{\rm t}\mscrV$,
where $\mscrV$ and $\mscrD$ are as in the statement of {\bf (a)}, can be easily verified.  \\[1.5mm]
{\bf (b)}\, The assertion about the rank follows from the equality  $\rank \Phi_{h} = \rank \mscrG_{\Phi}(\mbcalx) = \rank\mscrD$, since  $\mscrV\in\GL_{m}(\K)$.
Further, note that the local decomposition $\,A\iso  K^{r}\times \K^{s}$ (see\,\ref{mypar:5.1}.c)   yields the orthogonal decomposition (see\,\ref{mypar:4.2} and \ref{decthm:2.13}) \\[1mm]
\hspace*{25mm} $\Phi_{h}\! =\! (\Phi_{h})_{1}^{K}\operp\cdots \operp (\Phi_{h})_{r}^{K}\operp (\Phi_{h})_{1}^{\K} \operp \cdots (\Phi_{h})_{s}^{\K}$, \\[1mm]
where $(\Phi_{h})_{i}^{K}\! = \! \Phi_{h}\vert K$,  is  the restrictions of $\Phi_{h}$  to the real component at $a_{i}\!\in\! K$ with  Gram's matrix  $\mscrG_{(\Phi_{h})_{i}^{K}} (1)\! =\!(h(a_{i}))\in{\rm M}_{1}(K)$,  $i\!=\!1,\ldots , r$) and $(\Phi_{h})^{\K}_{j}\!= \!\Phi_{h}\vert \K$,
is  the restrictions of $\Phi_{h}$ to the non-real component $\,K[X]/\langle \pi_{j}\rangle\iso \K\,$
at $\mscrM_{j}\! =\!\langle\pi_{j}\rangle \in\Spm\,A\!\smallsetminus\!\KSpec A$,  $j\!=\!1,\ldots , s$.  Furthermore, clearly, $\type\,(\Phi_{h})_{i}^{K}\!=\!\sign (\Phi_{h})_{i}^{K}\! =\! \sign h(a_{i})\!=\!\sign\, H({\bf a}_{i})$ for all  $i\!=\!1,\ldots , r$ and by Example\,\ref{ex:3.10} (since $\pi_{j}\!=\! (X\!-\!z_{j})(X\!-\!\overline{z}_{j})$, $z_{j}\in\K\!\smallsetminus\!K$), we have  $\,\type\,(\Phi_{h})_{j}^{\K}\!=\! (1,1)$ for all  $j\!=\!1,\ldots , s$.  Therefore, by Corollary\,\ref{cor:4.4}\,: \\[1mm]
\hspace*{20mm}$\type \,\Phi_{h}\!=\!\sum_{i=1}^{r} \, \type\,(\Phi_{h})_{i}^{K}\!+ \!\sum_{j=r+1}^{r+s}\, \type\, (\Phi_{h})_{j}^{\K}\!=\!(p_{H}+s\,,\, q_{H}+s)$\\[1mm]
and hence  $\,\sign\,\Phi_{h}\!=\!p_{H}\!-\!q_{H}$. \qed
\end{proof}
\smallskip

\begin{corollary}\label{cor:5.6}{\rm \so{(Hermite)}}
Let $K$ be an arbitrary real closed field and let  $\,g=b_{0}+b_{1}X+\cdots +b_{m-1}X^{m-1}+X^{m}\in K[X]$,  $\,A:=K[X]/\langle g\rangle$. Then the $\,\type\,\tr_{K}^{A} = (r+s\,,\, s)$, where $\,\tr_{K}^{A}:A\times A \to K$, $(f,f') \mapsto \tr_{K}(ff')$  is the trace form  on $A$,  $\,r=\#{\rm V}_{K}(g)$ is the number of zeros of $g$ in $K$ and $\,s\,$ is the half of the number of zeros of $g$ in the algebraic closure $\K$ of $K$  which are not in $K$. In~particular,
$\sign \, \tr_{K}^{A} = r = \#\,{\rm V}_{K}(g)$.
\end{corollary}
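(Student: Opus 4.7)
The plan is to compute $\type\tr_{K}^{A}$ by determining its rank and its signature separately, and then solving for $(p,q)$ with $\type\tr_{K}^{A}=(p,q)$. Since $K$ is real closed, every irreducible factor of $g$ in $K[X]$ is either linear, $X-a_{i}$ with $a_{i}\in K$, or quadratic, $p_{j}=(X-z_{j})(X-\overline{z}_{j})$ with $z_{j}\in\C_{K}\setminus K$; write
\[
g=\prod_{i=1}^{r}(X-a_{i})^{e_{i}}\cdot\prod_{j=1}^{s}p_{j}^{f_{j}}
\]
with the $a_{i}$ pairwise distinct and the $p_{j}$ pairwise distinct, so that $r=\#{\rm V}_{K}(g)$ and $2s=\#\bigl({\rm V}_{\C_{K}}(g)\setminus{\rm V}_{K}(g)\bigr)$ agree with the meaning in the statement. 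By the Chinese Remainder Theorem the maximal ideals of $A=K[X]/\langle g\rangle$ are in bijection with the distinct irreducible factors of $g$, with residue field $K$ at each linear factor and $\C_{K}$ at each quadratic factor.

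The signature drops out of Theorem~\ref{thm:4.5}: the set $\KSpec A$ consists exactly of those maximal ideals whose residue field is $K$, so $\#\KSpec A=r$ and hence $\sign\tr_{K}^{A}=r$. For the rank I would invoke Lemma~\ref{lem:4.3}: because ${\rm Char}\,K=0$ every residue class field extension of $A/K$ is automatically separable, so $A^{\bot}=\mathfrak{n}_{A}$ and $\rank\tr_{K}^{A}=\Dim_{K}(A/\mathfrak{n}_{A})$. The nilradical $\mathfrak{n}_{A}$ is the image of $\langle g_{\mathrm{red}}\rangle$ in $A$, where $g_{\mathrm{red}}:=\prod_{i}(X-a_{i})\cdot\prod_{j}p_{j}$ is the square-free radical of $g$, and therefore $A/\mathfrak{n}_{A}\cong K[X]/\langle g_{\mathrm{red}}\rangle$ has $K$-dimension $\deg g_{\mathrm{red}}=r+2s$.

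Combining the two computations gives $p+q=\rank\tr_{K}^{A}=r+2s$ and $p-q=\sign\tr_{K}^{A}=r$, whence $(p,q)=(r+s,s)$, as asserted, and the ``in particular'' follows immediately. The only non-routine points are the identification of $\KSpec A$ with the linear-factor maximal ideals and the equality $A^{\bot}=\mathfrak{n}_{A}$; both rest on the local decomposition of a finite $K$-algebra recalled in \ref{mypar:4.1}(c) together with the characteristic-zero hypothesis, so neither is genuinely an obstacle. An alternative route that by-passes Theorem~\ref{thm:4.5} would first reduce to the square-free case by replacing $g$ by $g_{\mathrm{red}}$---using Lemma~\ref{lem:4.3} and \ref{mypar:4.9}.2--\ref{mypar:4.9}.3 to verify that rank, type and signature are preserved---and then apply Theorem~\ref{thm:5.5} in the one-variable case ($n=1$, $\mathfrak{A}=\langle g_{\mathrm{red}}\rangle$) with $H=1$, so that $p_{H}=r$, $q_{H}=0$ and $\type\Phi_{1}=(r+s,s)$ drops out directly.
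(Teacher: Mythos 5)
Your proposal is correct, and its main line of argument is a genuinely different (and in one respect more careful) route than the paper's. The paper proves Corollary~\ref{cor:5.6} by specializing Theorem~\ref{thm:5.5} to $H=1$: then $p_{1}=r$, $q_{1}=0$, and \ref{thm:5.5}\,(b) hands over $\type\Phi_{1}=(r+s,s)$ and $\sign\Phi_{1}=r$ in one stroke (the paper adds that the signature also follows from Theorem~\ref{thm:4.5}). You instead compute the two scalar invariants separately --- $\sign\tr_{K}^{A}=\#\KSpec A=r$ from Theorem~\ref{thm:4.5}, and $\rank\tr_{K}^{A}=\Dim_{K}(A/\mathfrak{n}_{A})=r+2s$ from Lemma~\ref{lem:4.3} together with the characteristic-zero separability --- and then solve $p+q=r+2s$, $p-q=r$ for the type. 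What your route buys is that it applies verbatim to an arbitrary monic $g$, not necessarily square-free: the statement of the corollary does not assume $g$ reduced, whereas Theorem~\ref{thm:5.5} is formulated under the standing hypotheses of \ref{mypar:5.1} and \ref{mypar:5.4}, which require a radical ideal. Your closing remark that the Theorem~\ref{thm:5.5} route needs a prior reduction to $g_{\mathrm{red}}$ (justified by Lemma~\ref{lem:4.3} and \ref{mypar:4.9}.2--\ref{mypar:4.9}.3) is exactly the point the paper's one-line proof glosses over; with that reduction in place your ``alternative route'' coincides with the paper's argument. Both approaches ultimately rest on the same local decomposition $A_{\mathrm{red}}\iso K^{r}\times\K^{s}$ and on $\type\tr_{K}^{\C_{K}}=(1,1)$, so the difference is one of packaging rather than substance, but your packaging is the more robust of the two for the corollary as literally stated.
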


\begin{proof}
Using the notations as in the Theorem\,\ref{thm:5.5}, note that $\tr_{K}^{A}\!=\!\Phi_{1}$, where $1\!\in\!K[X]$ denote the constant polynomial. Therefore  $p_{1}\!=\!r\!=\!\#{\rm V}_{K}(g)$,  $q_{1}\!=\!0$  and
$\sign \, \tr_{K}^{A}\!=\!p_{1}-q_{1}\!=\! r\!=\!\#{\rm V}_{K}(g)$ by
by  \ref{thm:5.5}\,(c). Of course, the assertion also follows directly from Theorem\,\ref{thm:4.5}. \qed
\end{proof}
\smallskip

With the notation and assumptions as in \ref{mypar:5.1}, our main goal is to relate the cardinality $\#\,{\rm V}_{K}(\mathfrak{A})\,$ with the signatures of the generalized trace form on the finite $K$-algebra $A$.
\smallskip

\begin{mypar}\label{mypar:5.7}{\sc Notation}\, With the notation and assumptions as in \ref{mypar:5.1} and as \ref{mypar:5.4}. Further, let $H\in K[X_1,.....,X_n]$, $H\neq 0$ and ${\rm V}_{K}(H):= \{a\in K^{n}\mid H(a)=0 \}$ be the hypersurface (an $(n-1)$-dimensional affine algebraic set in $K^{n}$) defined by $H$. Then the complement of  ${\rm V}_{K}(H)$ in $K^{n}$ is the union of line-connected subsets (in the strong topology on $K^{n}$ (see\,Footnote\,\ref{foot:5}) on which $H$ takes either all positive values or all negative values, i.\,e. $K^{n}\!\smallsetminus\!{\rm V}_{K}(H)\!=\!H^+ \uplus H^-\!$,
where $H^+\!:=\!\{a\in  K^n \mid H(a)>0\}$, and $H^-\!:=\!\{a\in K^n\mid  H(a)<0\}$.

Further, since
${\rm V}_{K}(\mathfrak{A}) = \left({\rm V}_{K}(\mathfrak{A})\cap H^{+}\right) \biguplus \left({\rm V}_{K}(\mathfrak{A})\cap H^{-}\right) \biguplus \left({\rm V}_{K}(\langle\mathfrak{A}, H\rangle)\right)$, we have\,: \\[2mm]
{\bf \ref{mypar:5.7}.a} \hspace*{2mm} $\,\#\,{\rm V}_{K}(\mathfrak{A}) = \#\,\left({\rm V}_{K}(\mathfrak{A}) \cap  H^{+}\right)
+\, \#\,\left({\rm V}_{K}(\mathfrak{A})\cap H^{-}\right)
+ \, \#\,\left({\rm V}_{K}(\langle\mathfrak{A}, H\rangle)\right)\,$,\\[1mm]
and hence
to compute $\#\,{\rm V}_{K}(\mathfrak{A})\,$, we can use arbitrary polynomial $H\in K[X_{1},\ldots , X_{n}]$ and compute the cardinalities \ $\#\,{\rm V}_{K}(\mathfrak{A})\cap H^{+}$, \ $\#\,{\rm V}_{K}(\mathfrak{A})\cap H^{-}$ and \ $\#\,{\rm V}_{K}(\langle \mathfrak{A}, H\rangle)$.
\end{mypar}
\smallskip

More precisely, we have\,:
\smallskip

\begin{theorem}\label{thm:5.8}\,
With the notation and assumptions as in {\rm \ref{mypar:5.1}} and {\rm \ref{mypar:5.7}}. For $H\in K[X_1,.....,X_n]$, $H\neq 0$, let
$p_{H}:=\#\,{\rm V}_{K}(\mathfrak{A}) \cap H^{+}$ and  $q_{H}:=\#\,{\rm V}_{K}(\mathfrak{A}) \cap H^{-}$. Further, let $h$ denote the image of $H$ in $A=K[X_1,.....,X_n]/\mathfrak{A}$ and $\Phi_{h}:A \times A \to K$, $(f,g)\mapsto \tr_{K}^{A}(hfg)$  $($resp. $\Phi_{h^{2}}:A \times A \to K$,
$(f,g)\mapsto \tr_{K}^{A}(h^{2}fg))$  be the generalized trace forms defined
by $h\,($resp. by $h^{2})$ on $A$. Then\,:

\begin{alist}
\item {\rm (\so{Pederson-Roy-Szpirglas}\, \cite[Theorem\,2.1]{PRS1993})}\,  \\[1mm]
\hspace*{16mm}
  $\,\sign\,\Phi_{h}= p_{H}- q_{H}\,$ \ and \ $\,\rank\,\Phi_{h} =
\#\,\left({\rm V}_{\K}(\mathfrak{A})\!\smallsetminus\!{\rm V}_{\K}(H)\right)$.

\item   $\,\sign\,\Phi_{h^{2}}= p_{H}+q_{H}\,$ \ and \ $\,\rank\,\Phi_{h^{2}} =
\#\,\left({\rm V}_{\K}(\mathfrak{A})\!\smallsetminus\!{\rm V}_{\K}(H)\right)$.

\item
Let $\mathfrak{B}:=\langle \mathfrak{A}, H\rangle$ be the ideal $($in $K[X_{1},\ldots , X_{n}]\,)$ generated by $\mathfrak{A}$ and $H$. Then the $K$-algebra  $B:=K[X_{1},\ldots , X_{n}]/\mathfrak{B} $ is finite over $K$ and $\,\sign\,\tr^{B}_{K} = \#\,{\rm V}_{K}(\mathfrak{B})$.

\item
The three signatures $\,\sign\,\Phi_{h}$, $\,\sign\,\Phi_{h^{2}}$ and $\,\sign\,\tr_{K}^{B}$ uniquely determine the natural numbers $p_{H}$, $q_{H}\,$ \ and \  $\,\#\,{\rm V}_{K}(\mathfrak{B}) = {\rm V}_{K}(\mathfrak{A}) \cap {\rm V}_{K}(H)$. In~particular, they determine the cardinality $\,\#\,{\rm V}_{K}(\mathfrak{A}) = p_{h}+q_{h}+\#\,{\rm V}_{K}(\mathfrak{B})$.

\end{alist}
 \end{theorem}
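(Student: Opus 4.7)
The overall strategy is transparent once Theorems~\ref{thm:4.5} and~\ref{thm:5.5} are available: parts~(a) and~(b) are direct applications of Theorem~\ref{thm:5.5} to the elements $h$ and $h^{2}$ of $A$, part~(c) is a one-line application of Theorem~\ref{thm:4.5} to the finite quotient algebra $B$, and part~(d) is obtained by inverting the three linear relations produced by (a)--(c) and combining with the partition in~\ref{mypar:5.7}.a.

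For (a), Theorem~\ref{thm:5.5}(b) already delivers $\type \Phi_{h} = (p_{H}+s\,,\, q_{H}+s)$, so subtracting components gives $\sign \Phi_{h} = p_{H} - q_{H}$. The rank assertion I would extract from the Vandermonde factorisation $\mathscr{G}_{\Phi_{h}}(\mathbcal{x}) = \mathscr{V}\mathscr{D}\,{}^{{\rm t}}\mathscr{V}$ of Theorem~\ref{thm:5.5}(a): since $\mathscr{V} \in \GL_{m}(\K)$, the rank of $\Phi_{h}$ coincides with the rank of the diagonal matrix $\mathscr{D}$, which is precisely the number of nonzero diagonal entries, namely $\#\{z \in {\rm V}_{\K}(\mathfrak{A}) \mid H(z) \neq 0\} = \#({\rm V}_{\K}(\mathfrak{A}) \smallsetminus {\rm V}_{\K}(H))$. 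For~(b), I apply Theorem~\ref{thm:5.5} verbatim with the element $h^{2}$ in place of $h$: at every point $a \in {\rm V}_{K}(\mathfrak{A})$ one has $H^{2}(a) = H(a)^{2} > 0$ whenever $H(a) \neq 0$ and $= 0$ otherwise, hence $p_{H^{2}} = p_{H} + q_{H}$, $q_{H^{2}} = 0$, giving $\sign \Phi_{h^{2}} = p_{H} + q_{H}$. The rank is unchanged because the diagonals $(h(z))_{z}$ and $(h(z)^{2})_{z}$ have the same zero pattern.

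For~(c), the algebra $B = A/hA$ is a quotient of the finite-dimensional $K$-algebra $A$, hence itself finite over $K$. Under the identification $\KSpec B = {\rm V}_{K}(\mathfrak{B})$ from~\ref{mypar:4.1}(b), Theorem~\ref{thm:4.5} applied to $B$ yields immediately $\sign \tr_{K}^{B} = \#\KSpec B = \#{\rm V}_{K}(\mathfrak{B})$.

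Part~(d) is then elementary: solving the pair $p_{H} - q_{H} = \sign \Phi_{h}$ and $p_{H} + q_{H} = \sign \Phi_{h^{2}}$ yields $p_{H} = \tfrac{1}{2}(\sign \Phi_{h^{2}} + \sign \Phi_{h})$ and $q_{H} = \tfrac{1}{2}(\sign \Phi_{h^{2}} - \sign \Phi_{h})$, while (c) gives $\#{\rm V}_{K}(\mathfrak{B}) = \sign \tr_{K}^{B}$; substituting these into the disjoint decomposition from~\ref{mypar:5.7}.a produces $\#{\rm V}_{K}(\mathfrak{A}) = p_{H} + q_{H} + \#{\rm V}_{K}(\mathfrak{B})$. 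There is no substantial obstacle in the proof itself: all the genuine content sits inside Theorems~\ref{thm:5.5} (via the Shape Lemma and the Vandermonde factorisation of the Gram matrix) and~\ref{thm:4.5}. The only point requiring minor care is to ensure the radicality hypothesis needed for the Shape Lemma is in force so that Theorem~\ref{thm:5.5} applies to both $h$ and $h^{2}$---but since that theorem is stated for an arbitrary element $h$ of $A$ with $H \neq 0$, and since $H \neq 0$ in $K[X_{1},\ldots,X_{n}]$ implies $H^{2} \neq 0$, this causes no difficulty.
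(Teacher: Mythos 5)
Your proposal is correct and follows essentially the same route as the paper: parts (a) and (b) are read off from Theorem~\ref{thm:5.5} (applied to $h$ and to $h^{2}$, using $H^{2}>0$ off ${\rm V}_{K}(H)$ and the common zero pattern of the Vandermonde-diagonal factorisation for the rank), part (c) is Theorem~\ref{thm:4.5} applied to the finite quotient $B$, and part (d) inverts the resulting linear relations via the partition \ref{mypar:5.7}.a. The only difference is cosmetic: you make the inversion formulas for $p_{H}$ and $q_{H}$ explicit where the paper simply says ``immediate.''
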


\begin{proof}
{\bf (a)\,:}  Proved in Theorem\,\ref{thm:5.5}\,(b).
\smallskip

{\bf (b)\,:} Since $H^{2}(a)\!\!=\!H(a)\,H(a)\!>\!0$ for every $a\!\in\!H^{+}\!\cup H^{-}\!$ and ${\rm V}_{K}(H^{2})\!=\!{\rm V}_{K}(H)$, from  Theorem\,\ref{thm:5.5}\,(b) it follows that\,: \\[1mm]
\hspace*{15mm}   $\,\sign\,\Phi_{h^{2}}= p_{H}+q_{H}\,$ \  and
$\,\rank\,\Phi_{h^{2}}\!\!=\!
\#\,\left({\rm V}_{K}(\mathfrak{A})\!\smallsetminus\!{\rm V}_{K}(H)\right)$.
\smallskip

{\bf (c)\,:} Since the $K$-algebra $B$ is a homomorphic image of the $K$-algebra $A$, $B$ is also finite over $K$. The equality  $\,\sign\,\tr^{B}_{K} = \#\,{\rm V}_{K}(\mathfrak{B})$ is immediate from Theorem\,\ref{thm:5.5}\,(a) ($H=1$) or Theorem\,\ref{thm:4.5}.
\smallskip

{\bf (d)\,:} Immediate from the formula \ref{mypar:5.7}.a for $\#{\rm V}_{K}(\mathfrak{A})$ in \ref{mypar:5.7} and  (a) and (b). \qed
\smallskip
\end{proof}

\bibliographystyle{amsplain}

\end{document}